\newcommand{\cal}[1]{\mathcal{#1}}
\theoremstyle{plain}
\newtheorem{theo}{Theorem}
\newtheorem{cor}{Corollary} 
\newtheorem{lemma}{Lemma}[section]
\newtheorem{theorem}[lemma]{Theorem}
\newtheorem{proposition}[lemma]{Proposition}
\newtheorem{corollary}[lemma]{Corollary}
\theoremstyle{definition}
\newtheorem{definition}[lemma]{Definition}
\newtheorem{remark}[lemma]{Remark} 
\newtheorem{example}[lemma]{Example}
\let\egthree=\phi 
\let\phi=\varphi
\let\varphi=\egthree
\newcounter{sebcomments}
\begin{document}
\title{Typical and atypical properties of periodic Teichm\"uller geodesics}
\author{Ursula Hamenst\"adt}
\thanks
{Keywords: Abelian differentials, Teichm\"uller flow, 
periodic orbits, Lyapunov exponents, trace fields, 
orbit closures, equidistribution\\
AMS subject classification: 37C40, 37C27, 30F60\\
Research
supported by ERC grant ``Moduli''}
\date{February 20, 2017} 

\begin{abstract} Consider a component ${\cal Q}$ of 
a stratum in the moduli space 
of area one abelian differentials 
on a surface of genus $g$. 
Call a property ${\cal P}$ for periodic
orbits of the Teichm\"uller flow on  
${\cal Q}$ \emph{typical} if
the growth rate of orbits with property ${\cal P}$ is maximal.
Typical are:
The logarithms of the eigenvalues of the symplectic matrix defined
by the orbit are arbitrarily close to the
Lyapunov exponents of ${\cal Q}$, and its 
trace field is a totally real splitting field 
of degree $g$ over $\mathbb{Q}$. 
If $g\geq 3$ 
then periodic orbits whose $SL(2,\mathbb{R})$-orbit
closure equals ${\cal Q}$ 
are typical. We also show that ${\cal Q}$ contains 
only finitely many
algebraically primitive Teichm\"uller curves, and only 
finitely many affine invariant submanifolds of rank $\ell\geq 2$.

\end{abstract}

\maketitle

\section{Introduction}

The mapping class group ${\rm Mod}(S)$ 
of a closed surface $S$ of genus $g\geq 2$ acts 
by precomposition of marking on the
Teichm\"uller space ${\cal T}(S)$ 
of marked complex structures on $S$.
The action is properly discontinuous, with 
quotient the moduli space 
${\cal M}_g$ of complex structures on $S$.

The goal of this paper is to describe properties of  
this action which
are invariant under conjugation and hold true
for conjugacy classes of 
mapping classes which are typical in the following sense.

The moduli space of area one abelian differentials on 
$S$ decomposes into \emph{strata} of differentials
with zeros of given multiplicities. 
There is a natural $SL(2,\mathbb{R})$-action on 
any connected component ${\cal Q}$ of this moduli space.
The action of the diagonal subgroup is the
\emph{Teichm\"uller flow} $\Phi^t$.

Let $\Gamma$ be the set of all 
periodic orbits for $\Phi^t$ on ${\cal Q}$. 
The length of a periodic orbit 
$\gamma\in \Gamma$ is denoted by $\ell(\gamma)$. 
Let $h>0$ be the entropy of the unique
$\Phi^t$-invariant Borel probability 
measure on ${\cal Q}$ in the
Lebesgue measure class \cite{M82,V86}.
As an application of \cite{EMR12} 
(see also \cite{EM11,H11}) 
we showed in \cite{H13} that
\[\sharp\{\gamma\in \Gamma\mid \ell(\gamma)\leq R\}\sim \frac{e^{hR}}{hR}.\]

Call a subset ${\cal A}$ of $\Gamma$ \emph{typical} 
if 
\[\sharp \{\gamma\in {\cal A}\mid
\ell(\gamma)\leq R\}\sim \frac{e^{hR}}{hR}.\]
Thus a subset of $\Gamma$ is typical if its growth rate
is maximal. The intersection of two typical subsets is typical.

A periodic orbit 
$\gamma\in \Gamma$ for $\Phi^t$ determines the conjugacy class of 
a pseudo-Anosov mapping class.
Each mapping class acts on $H_1(S,\mathbb{Z})$.
This defines a natural surjective \cite{FM12} homomorphism
\[\Psi:{\rm Mod}(S)\to Sp(2g,\mathbb{Z}).\]
Thus a periodic orbit $\gamma$ of $\Phi^t$ determines the 
conjugacy class $[A(\gamma)]$ 
of a matrix $A(\gamma)\in Sp(2g,\mathbb{Z})$.

Let 
\[1=\kappa_1>\kappa_2>\dots >\kappa_g>0\] 
be the positive Lyapunov exponents
of the Kontsevich Zorich cocycle with respect to 
the normalized Lebesgue
measure on ${\cal Q}$. The fact that there are no multiplicities
in the Lyapunov spectrum was shown in \cite{AV07}.
For $\gamma\in \Gamma$ 
let $\hat \alpha_i(\gamma)$ be the logarithm of the 
absolute value of the $i$-th 
eigenvalue of the matrix $A(\gamma)$ ordered in decreasing order
and write $\alpha_i(\gamma)=\hat \alpha_i(\gamma)/\ell(\gamma)$.
As $A(\gamma)$ is symplectic, with real
leading eigenvalue $e^{\ell(\gamma)}$,  we have 
\[1=\alpha_1(\gamma)\geq \dots \geq \alpha_g(\gamma)\geq 0
\geq -\alpha_g(\gamma)\geq \dots \geq -\alpha_1(\gamma)=-1.\]
As eigenvalues of matrices 
are invariant under conjugation, 
for $-g\leq i\leq g$ we obtain in this way a function
$\alpha_i:\Gamma\to [-1,1]$.

The characteristic polynomial of a symplectic
matrix $A\in Sp(2g,\mathbb{Z})$ 
is a reciprocal polynomial of degree $2g$ with integral
coefficients. Its roots define a number field $K$ 
of degree at most
$2g$ over $\mathbb{Q}$ which is a quadratic extension of 
the so-called \emph{trace field} of $A$.
The Galois group of $K$  
is isomorphic to a subgroup of the semi-direct product
$(\mathbb{Z}/2\mathbb{Z})^g\rtimes \Sigma_g$ where $\Sigma_g$ is the 
symmetric group in $g$ variables (see \cite{VV02} for 
details). The field $K$ and the Galois group 
only depend on the conjugacy class of $A$.

For $\gamma\in \Gamma$ let 
$G(\gamma)$ be the Galois group 
of the number field defined by the 
conjugacy class $[A(\gamma)]$. We show

\begin{theo}\label{theolyapunov}
\begin{enumerate}
\item For $\epsilon >0$ the set  
$\{\gamma\in \Gamma\mid 
\vert \alpha_i(\gamma)-\kappa_i\vert <\epsilon\}$ $(1\leq i\leq g)$
is typical.
\item 
The set of all $\gamma\in \Gamma$ such that
the trace field of $[A(\gamma)]$ is totally real, of degree
$g$ over $\mathbb{Q}$, and  
$G(\gamma)=(\mathbb{Z}/2\mathbb{Z})^g\rtimes\Sigma_g$
is typical.
\end{enumerate}
\end{theo}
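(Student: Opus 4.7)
The plan is to treat the two parts in sequence, using as a common framework the Markov/symbolic coding of the Teichm\"uller flow that underlies the counting estimate of \cite{H13}. This coding represents periodic orbits of $\Phi^t$ by periodic sequences in a countable shift and identifies the growth rate $h$ with the thermodynamic pressure of the natural potential; typicality of a subset of $\Gamma$ thus becomes a statement about the symbolic pressure of its complement.

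For part (1), the quantities $\hat\alpha_i(\gamma)$ are the logarithms of the top $g$ singular values of the Kontsevich--Zorich cocycle integrated once around $\gamma$. By the Oseledets theorem applied to the invariant probability measure in the Lebesgue class, these singular values grow on typical orbits at rates $\kappa_1,\dots,\kappa_g$, which are distinct by \cite{AV07}. A standard large-deviation estimate for the symplectic cocycle over the coding shows that the set of orbit segments of length $T$ on which the $i$-th exponent deviates from $\kappa_i$ by more than $\epsilon$ has symbolic pressure strictly less than $h$. The Anosov closing lemma converts this measure bound into a growth estimate for the corresponding exceptional periodic orbits, strictly below $h$; comparison with the $e^{hR}/hR$ asymptotic of \cite{H13} shows that the set described in (1) is typical.

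For part (2), I would fix once and for all a witness periodic orbit $\gamma_0\subset\mathcal{Q}$ whose matrix $A_0=A(\gamma_0)$ has irreducible characteristic polynomial with totally real trace field of degree $g$ and Galois group $(\mathbb{Z}/2\mathbb{Z})^g\rtimes\Sigma_g$. Existence of $\gamma_0$ is a finite problem: by the Chebotarev-type criterion reviewed in \cite{VV02} it suffices to exhibit primes $p_1,\dots,p_N$ such that the reductions $A_0\bmod p_j$ realize a generating set of cycle types in the hyperoctahedral group. Given this witness, I would propagate its arithmetic data to a typical subset of $\Gamma$ by concatenation. For a typical long orbit segment $\sigma$ produced by the coding, insert at controlled positions a bounded number of copies of the symbolic itinerary of $\gamma_0$ and close up via the closing lemma. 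The insertion has bounded combinatorial cost, so the resulting family still has maximal growth rate $h$ and is typical; designed correctly, it produces periodic orbits whose mod-$p_j$ monodromy is conjugate to a nontrivial power of $A_0$, hence has the same Frobenius cycle type as $A_0$ modulo $p_j$. Since all the required cycle types are realized, the Galois group of $A(\gamma)$ must contain and therefore equal $(\mathbb{Z}/2\mathbb{Z})^g\rtimes\Sigma_g$; irreducibility of the characteristic polynomial of degree $2g$ is a consequence, and total realness of the trace field of degree $g$ then follows from part~(1), which ensures that the $2g$ eigenvalues are real, distinct, and reciprocal-paired.

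The principal obstacle is the mod-$p$ rigidity required by the insertion step: one must arrange that the portions of the long typical segment $\sigma$ lying between the inserts contribute a factor to $A(\gamma)$ which is congruent to the identity modulo each $p_j$. I would handle this by passing to the finite cover of $\mathcal{Q}$ determined by the image of the Kontsevich--Zorich cocycle in $Sp(2g,\mathbb{Z}/P\mathbb{Z})$ with $P=\prod_j p_j$, and proving that periodic orbits with prescribed holonomy in this cover still have maximal growth rate $h$. This equidistribution-in-covers statement, together with the initial construction of the witness $\gamma_0$ in every component $\mathcal{Q}$ (which in particular requires ruling out that some $\mathcal{Q}$ intrinsically forces smaller Galois groups), constitutes the technical heart of part~(2), whereas part~(1) is a fairly direct application of the Oseledets/closing lemma paradigm.
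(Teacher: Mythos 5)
Your treatment of part~(1) is conceptually in the right neighborhood but follows a different route from the paper and appeals to a step that is not as automatic as you suggest. You invoke a large-deviation estimate for the Kontsevich--Zorich cocycle over a symbolic coding of $\Phi^t$, plus a closing lemma, to show the exceptional periodic orbits have exponential growth rate $<h$. The paper does not prove or use any large-deviation estimate: it instead combines Oseledets' theorem (to produce a set $B\subset Z_0$ of measure close to $\lambda(Z_0)$ where the finite-time cocycle exponents are $\epsilon/2$-close to the $\kappa_i$), mixing of the Lebesgue measure (to show $\Phi^R B\cap B$ has the right measure), and the periodic-orbit construction from \cite{H13} (to close up points of $\Phi^R B\cap B$ into orbits satisfying $\chi_\epsilon=1$). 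The large-deviation bound you need --- that the set of deviating length-$T$ symbolic blocks has pressure strictly below $h$ --- is plausible for this countable-Markov-shift coding with an unbounded cocycle, but it is not a result one can simply cite as standard; the paper's mixing plus Oseledets argument is more elementary and stays entirely within the toolkit already set up in \cite{H13}.

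For part~(2) the proposal has a genuine gap rather than a different route: you fix a witness orbit $\gamma_0$ with full Galois group and then propagate by symbolic insertion, but you yourself flag the two subclaims this reduces to --- existence of $\gamma_0$ in every component $\mathcal{Q}$, and maximal growth rate of orbits with prescribed monodromy in the $Sp(2g,\mathbb{Z}/P\mathbb{Z})$ cover --- and offer no argument for either. These are precisely the paper's Proposition~3.3 (surjectivity of the local monodromy onto $Sp(2g,F_p)$, proved via the train-track realization of Humphries generators inside $\mathcal{Q}$ and Hall's transvection theorem) and Proposition~3.6 (equidistribution of $\Lambda_p\Psi\Omega(\gamma)$ over $Sp(2g,F_p)$, proved from mixing), and they constitute essentially all of Section~3. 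Note also that once you have those, the witness $\gamma_0$ becomes unnecessary: the paper applies Borel's bound on the proportion of matrices with reducible characteristic polynomial together with Rivin's sieving over many primes \cite{R08}, which avoids your delicate insertion-and-closing step entirely, and also sidesteps the issue that a \emph{power} of $A_0\bmod p_j$ need not have the same Frobenius cycle type as $A_0$. The reduction of total realness to part~(1) (real, distinct, reciprocal-paired eigenvalues force a totally real trace field) is correct and matches Corollary~3.8 of the paper.
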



The proof of Theorem \ref{theolyapunov} uses a 
result on the Zariski closure of the image under the 
map $\Psi$ of pseudo-Anosov mapping classes obtained from 
the first return map of the Teichm\"uller flow
on ${\cal Q}$ to a small contractible flow box in 
${\cal Q}$. Although our viewpoint is a bit different, this
discussion can be translated into properties of the 
\emph{Rauzy-Veech group} of ${\cal Q}$ and yields the following
result which 
was conjectured by Zorich \cite{Z99}.

\begin{cor}\label{zorich}  
The Rauzy-Veech group of any component of a stratum is a 
Zariski dense subgroup of
$Sp(2g,\mathbb{Z})$.
\end{cor}

For hyperelliptic strata, 
Avila, Matheus and Yoccoz \cite{AMY16}
showed that the Rauzy-Veech group is a subgroup of 
$Sp(2g,\mathbb{Z})$ of finite index. In Proposition \ref{surjectivity}
we observe that for strata with at least one simple zero, the
Rauzy Veech group coincides with $Sp(2g,\mathbb{Z})$.
Theorem \ref{monodromy} is a more precise version of Corollary \ref{zorich} 
which is valid for \emph{affine invariant manifolds}
as well.

By the groundbreaking work of Eskin, Mirzakhani and Mohammadi
\cite{EMM15}, such affine invariant manifolds
are precisely the closures of orbits 
for the $SL(2,\mathbb{R})$-action on ${\cal Q}$. 
Examples of non-trivial orbit closures are arithmetic
\emph{Teichm\"uller curves}. They arise from branched covers
of the torus, and they 
are dense in any stratum of abelian differentials.
Other examples of orbit closures different from entire
components of strata can be constructed using more general
branched coverings.



The \emph{rank} of an affine invariant manifold ${\cal M}$ is defined by
\[{\rm rk}({\cal M})=\frac{1}{2}{\rm dim}(pT{\cal M})\]
where $p$ is the projection of period coordinates into absolute cohomology
\cite{W15}. 
Teichm\"uller curves are affine invariant manifolds of rank one, and the
rank of a component of a stratum equals $g$.

We establish 
a finiteness result for 
affine invariant submanifolds of rank at least two 
which is independently due to Eskin, Filip and Wright \cite{EFW17}.


\begin{theo}\label{finite}
Let $g\geq 2$ and let 
${\cal Q}$ be a component of a stratum in the moduli space of 
abelian differentials. 
For every $2\leq \ell\leq g$, there are only 
finitely many proper affine invariant submanifolds 
in ${\cal Q}$ of rank $\ell$.
\end{theo}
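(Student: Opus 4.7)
The plan is to split the finiteness into two steps: first show that every rank-$\ell$ affine invariant submanifold of ${\cal Q}$ is contained in a maximal rank-$\ell$ affine invariant submanifold of ${\cal Q}$ (still proper because ${\rm rk}({\cal Q})=g>\ell$); second, show that the set of such maxima is finite. The first step is a structural argument using the rank--REL decomposition; the second is the heart of the theorem and will require the rank-at-least-two hypothesis in an essential way.

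For the first step, recall that the tangent bundle $T{\cal N}$ of an affine invariant submanifold sits inside relative cohomology $H^1(S,\Sigma;\mathbb{C})$, where $\Sigma$ is the zero set of the differentials in ${\cal Q}$. The projection $p\colon H^1(S,\Sigma;\mathbb{C})\to H^1(S;\mathbb{C})$ satisfies $\dim p(T{\cal N})=2\,{\rm rk}({\cal N})$, and the REL subspace $\ker p$ has dimension $|\Sigma|-1$. If ${\cal M}_1\subsetneq{\cal M}_2$ both have rank $\ell$, then $p(T{\cal M}_1)=p(T{\cal M}_2)$ by dimensions, and a quick diagram chase shows $T{\cal M}_2\cap\ker p$ strictly contains $T{\cal M}_1\cap\ker p$: given $v\in T{\cal M}_2\setminus T{\cal M}_1$ write $p(v)=p(w)$ with $w\in T{\cal M}_1$, and observe $v-w\in T{\cal M}_2\cap\ker p$ but not in $T{\cal M}_1$. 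Since the REL dimension is bounded by $|\Sigma|-1$, any ascending chain of rank-$\ell$ submanifolds has length at most $|\Sigma|-1$, and so maximal rank-$\ell$ extensions exist.

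For the second step---the main obstacle---I exploit the hypothesis $\ell\geq 2$. By Wright's cylinder deformation theorem, a rank-$\geq 2$ affine invariant submanifold has tangent space cut out by linear equations over a totally real number field of bounded degree; combined with Filip's theorem that affine invariant submanifolds are quasi-projective subvarieties defined over a number field, this severely rigidifies the absolute tangent part $V=p(T{\cal N})$ and forces it to lie in a discrete locus of the Grassmannian of symplectic $2\ell$-planes in $H^1(S;\mathbb{C})$. The plan is then to upgrade discreteness to finiteness by a counting argument: if there were infinitely many distinct maxima ${\cal N}_1,{\cal N}_2,\ldots$, then the Lyapunov exponent and Galois information from Theorem~\ref{theolyapunov}, together with the equidistribution techniques behind Theorem~\ref{cyclic}, would produce too many periodic orbits of length $\leq R$ in $\bigcup_i{\cal N}_i$, contradicting either the $e^{hR}/hR$ upper bound or the typicality of the complement asserted in Theorem~\ref{cyclic}. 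The principal technical difficulty is obtaining an entropy gap between proper rank-$\ell$ submanifolds and ${\cal Q}$ that is \emph{uniform} over the family $\{{\cal N}_i\}$; this is where Wright's field-of-definition bound enters, keeping the dynamics inside each ${\cal N}_i$ within a bounded-complexity family and allowing one to translate the discreteness in the Grassmannian into an effective count.
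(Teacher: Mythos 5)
Your first step (existence of maximal rank-$\ell$ extensions via a bound on the REL dimension) is correct, but it is peripheral; the paper does not need to isolate it. The genuine gap is in your second step. The claim that Wright's field-of-definition theorem combined with Filip's quasi-projectivity ``forces $V=p(T{\cal N})$ to lie in a discrete locus of the Grassmannian'' does not follow from the cited results. Wright's theorem says the linear equations cutting out $p(T{\cal N})$ are defined over a totally real number field of degree at most $g$ (indeed $\mathbb{Q}$ when ${\rm rk}\geq 2$), but $\mathbb{Q}$-rational points of a Grassmannian are \emph{dense}, not discrete. The upgrade from rationality to discreteness is precisely the content of the theorem; it cannot be assumed. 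The subsequent counting argument also does not close: each proper ${\cal N}_i$ has strictly smaller dimension and hence strictly smaller entropy, so its periodic orbits grow at a strictly slower exponential rate, and a countable union of such families does not a priori overcount against $e^{hR}/hR$ --- you would need a uniform entropy gap, and the field-of-definition bound does not supply one. Worse, in the logical structure of this paper, Theorem \ref{cyclic} is itself derived from Proposition \ref{realanalytic} (Corollary \ref{orbitclosures}), so invoking ``the equidistribution techniques behind Theorem \ref{cyclic}'' to prove Theorem \ref{finite} is circular.

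The paper's actual mechanism is of a different nature and involves no counting. It compares the Chern connection $\nabla$ on the Hodge bundle with the Gauss--Manin connection $\nabla^{GM}$, restricted along $GL(2,\mathbb{R})$-orbits, and observes that the difference $\Xi=\nabla-\nabla^{GM}$ is a real analytic tensor field. A rank-$(k+1)$ affine invariant submanifold passing through $\omega$ forces, for all $t$, an eigenvector condition for the action of $\Xi^{k}$ on $\wedge^{2k}_{\mathbb{R}}{\cal L}$ transported along the flow line through $\omega$. The set of differentials where such an invariant eigenline exists is a real analytic, $\Phi^t$-invariant subset $\cal A$, and Lemma \ref{localzariski} (local Zariski density of the monodromy in $Sp(2\ell,\mathbb{R})$) is used to rule out that $\cal A$ contains an open set. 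Finally the isolation theorem of Eskin--Mirzakhani--Mohammadi converts ``closed, $SL(2,\mathbb{R})$-invariant, nowhere dense'' into ``contained in finitely many proper affine invariant submanifolds,'' and an inverse induction on rank gives the statement. Both the Zariski-density input and the EMM isolation theorem are absent from your proposal, and they are the two pillars on which the paper's proof stands.
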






As a corollary, we obtain

\begin{cor}\label{cyclic}
Let ${\cal Q}$ be any component of 
a stratum in genus $g\geq 3$. Then the
set of all $\gamma\in \Gamma$ 
whose $SL(2,\mathbb{R})$-orbit closure equals ${\cal Q}$ 
is typical.
\end{cor}

For $g=2$, Corollary \ref{cyclic} is false
in a very strong sense. 
Namely, McMullen \cite{McM03a} showed
that in this case, the orbit closure of any periodic orbit
is an affine invariant manifold
of rank one. If the trace field $K$ 
of the orbit is quadratic, then 
$K$ defines a Hilbert modular surface 
in the moduli space of principally polarized abelian 
varieties which contains the image of the 
orbit closure under the Torelli map.
Such a Hilbert modular surface is 
a quotient of ${\bf H}^2\times {\bf H}^2$ by the
lattice $PSL(2,{\cal O}_K)$ where ${\cal O}_K$ is the ring of 
algebraic integers in $K$. This insight is the starting
point of a complete 
classification
of orbit closures in genus $2$ \cite{McM03b}. 


In higher genus, Apisa \cite{A15} classified all orbit closures
in hyperelliptic components of strata. 
For other components of strata,  
a classification of orbit closures is not available.
However, there is 
substantial recent progress towards a geometric understanding
of orbit closures. In particular, Mirzakhani and Wright 
\cite{MW16} showed
that all affine invariant manifolds of maximal rank either are
components of strata or are contained in the hyperelliptic locus.
We refer to the work \cite{LNW15} of Lanneau, Nguyen and Wright for 
an excellent recent overview of what is known and for 
a structural result for rank one affine
invariant manifolds. 



Teichm\"uller curves are affine invariant manifolds of dimension
$2$. To each such 
Teichm\"uller curve, there is associated a \emph{trace field}
which is an algebraic number field of degree at most $g$ over
$\mathbb{Q}$. This trace field coincides with the trace field of 
every periodic orbit contained in the curve \cite{KS00}. 
The Teichm\"uller curve is called \emph{algebraically primitive}
if the degree of its trace field equals $g$. 

The stratum ${\cal H}(2)$ of abelian differentials with a single
zero on a surface of genus 2 contains infinitely many
algebraically primitive Teichm\"uller curves \cite{McM03b}. 
Recently, Bainbridge, Habegger and M\"oller \cite{BHM14} 
showed finiteness
of algebraically primitive Teichm\"uller curves in any stratum
in genus $3$.
Finiteness of algebraically primitive
Teichm\"uller curves in strata
of differentials with a single zero for surfaces of 
prime genus $g\geq 3$ was established in  
\cite{MW15}. Our final result generalizes this to every
stratum in every genus $g\geq 3$, with a different proof.
A stronger finiteness result is contained in \cite{EFW17}.

\begin{theo}\label{finiteteich}
Any component ${\cal Q}$ of a stratum in genus $g\geq 3$ 
contains only finitely many algebraically 
primitive Teichm\"uller curves.
\end{theo}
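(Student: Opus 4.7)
The plan is a contradiction argument combining the Eskin--Mirzakhani--Mohammadi (EMM) theory of orbit closures, Theorem \ref{finite}, and trace-field rigidity. Suppose ${\cal Q}$ contains infinitely many algebraically primitive Teichm\"uller curves $C_1, C_2, \ldots$; each $C_n$ is a closed affine invariant submanifold of rank one whose trace field $K_n$ is a totally real number field of degree exactly $g$ over $\mathbb{Q}$. By EMM together with the Mirzakhani--Wright non-accumulation statement for affine invariant submanifolds, after passing to a subsequence the $C_n$ converge (in the Chabauty-style topology on $SL(2,\mathbb{R})$-invariant closed subsets) to a single affine invariant submanifold ${\cal M}\subset {\cal Q}$ with $C_n\subset {\cal M}$ for all large $n$. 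A rank-one ambient ${\cal M}$ is impossible, since a rank-one submanifold has two-dimensional image in absolute cohomology and is essentially a single Teichm\"uller curve; hence ${\rm rk}({\cal M})\geq 2$.

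If ${\cal M}\subsetneq {\cal Q}$, then by Theorem \ref{finite} the submanifold ${\cal M}$ lies in a finite list of proper rank-${\rm rk}({\cal M})$ containers in ${\cal Q}$. Pigeonholing over rank and container I may fix a single proper ${\cal M}^{*}\subsetneq {\cal Q}$ containing infinitely many $C_n$. Wright's bound $[k({\cal M}^{*}):\mathbb{Q}]\cdot {\rm rk}({\cal M}^{*})\leq g$ combined with the containment $k({\cal M}^{*})\subset K_n$ of fields of definition for each $C_n\subset {\cal M}^{*}$ forces the degree-$g$ fields $K_n$ to be extensions of a fixed proper subfield $k({\cal M}^{*})\subsetneq K_n$ of degree $g/[k({\cal M}^{*}):\mathbb{Q}]$; combining this with M\"oller's description of the variation of Hodge structure on ${\cal M}^{*}$ and the rigidity of real multiplication in a bounded family of Jacobians (in the spirit of Bainbridge--Habegger--M\"oller), one concludes that only finitely many such $C_n$ can occur in ${\cal M}^{*}$, a contradiction.

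The remaining case is ${\cal M}={\cal Q}$, i.e.\ the $C_n$ are dense in ${\cal Q}$. Here I would invoke Theorems \ref{cyclic} and \ref{theolyapunov}: by Theorem \ref{cyclic}, typical periodic orbits in ${\cal Q}$ have $SL(2,\mathbb{R})$-orbit closure equal to ${\cal Q}$, so periodic orbits supported on $\bigcup_n C_n$ form an atypical subset of $\Gamma$ with count of length at most $R$ of strictly smaller exponential order than $e^{hR}/(hR)$. A quantitative equidistribution estimate for closed orbits in a dense accumulating family of affine invariant submanifolds (in the spirit of Eskin--Mirzakhani) then gives a lower bound on this count that contradicts atypicality.

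The decisive obstacle is the rigidity step inside the fixed proper submanifold ${\cal M}^{*}$: extracting finiteness of algebraically primitive Teichm\"uller curves in a given affine invariant submanifold of rank strictly between one and $g$ is not formal from Theorem \ref{finite} and demands genuine arithmetic input. If this step resists direct attack, an alternative is to iterate Theorem \ref{finite} through the finite rank hierarchy $\ell=g-1,g-2,\ldots,2$ of ${\cal Q}$, using pigeonhole at each descent to reduce to ever smaller containers and exploiting that a rank-one affine invariant submanifold is essentially a single Teichm\"uller curve and so cannot contain infinitely many distinct ones.
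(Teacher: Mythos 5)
Your proposal is structurally in the right spirit (reduce to finitely many containers, then control primitive curves inside a fixed container), but it has several genuine gaps, and at the critical juncture you explicitly admit the step is not worked out. Here is what goes wrong and how the paper actually handles it.

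First, your preliminary reduction via EMM and Mirzakhani--Wright to a single limit ${\cal M}$ with $C_n\subset{\cal M}$ is fine, but your dismissal of the rank-one case for ${\cal M}$ is not justified: a rank-one affine invariant manifold is not ``essentially a single Teichm\"uller curve.'' It can a priori contain many Teichm\"uller curves; ruling out infinitely many of them when the field of definition has degree $g$ is precisely the main result of Lanneau--Nguyen--Wright \cite{LNW15}, which the paper invokes at the end of its proof for exactly this case. You need that input, and it cannot be replaced by the dimension remark you give.

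Second, the ${\cal M}={\cal Q}$ case is not disposed of by your counting argument. That the periodic orbits supported in $\bigcup_n C_n$ form an atypical subset of $\Gamma$ does follow from Theorems \ref{theolyapunov} and \ref{cyclic}, but there is no available quantitative lower bound of maximal exponential order for periodic orbits concentrated in an accumulating family of Teichm\"uller curves; the orbit-count in any fixed Teichm\"uller curve grows at a strictly smaller exponential rate, so density of $\bigcup_n C_n$ produces no contradiction by itself. The paper eliminates this case at an earlier stage through Corollary \ref{teichmullercurve}: using Lemma \ref{zariski2} (Zariski density of the extended local monodromy in $SL(2,\mathbb{R})^g$, via Geninska) and Corollary \ref{fieldembed} (the absolute tangent bundle of any ambient ${\cal B}_+\supset C_n$ splits into flat line bundles over $C_n$), one shows via the real-analytic non-accumulation mechanism of Proposition \ref{realanalytic} that \emph{all} rank-one affine invariant submanifolds with degree-$g$ field of definition lie in a \emph{finite} collection of affine invariant submanifolds of rank \emph{at most two}. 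This is a much stronger and more specific conclusion than ``contained in some proper ${\cal M}^*$,'' and it is what makes the argument close.

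Third, the ``decisive obstacle'' you identify inside a fixed ${\cal M}^*$ is exactly the step that requires a new idea, and your sketch (Wright's bound plus vague appeal to Bainbridge--Habegger--M\"oller rigidity) does not close it. The paper's resolution is Proposition \ref{fakerankone}: in a \emph{rank-two} affine invariant manifold ${\cal B}_+$, algebraically primitive Teichm\"uller curves cannot accumulate. The mechanism is arithmetic-geometric: if they accumulate at $\omega\in{\cal B}_+$, Zariski density of the extended local monodromy forces the trace fields $\mathfrak{k}_i$ to coincide for all large $i$ (otherwise the monodromy would not preserve ${\cal Z}$), so a neighborhood of $\omega$ maps under the Torelli map into the Hilbert modular variety $H(\mathfrak{k})$; by holomorphicity and the identity principle all of ${\cal B}_+$ then maps into $H(\mathfrak{k})$, forcing ${\cal B}_+$ to consist of eigenforms for real multiplication by $\mathfrak{k}$ and hence to have rank one --- a contradiction. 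The fact that the reduction of Corollary \ref{teichmullercurve} lands you in rank-two (not arbitrary intermediate rank) ambient manifolds is essential for this argument. Your proposal never makes the rank-two reduction, and without it the Hilbert-modular rigidity argument does not directly apply.

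Finally, your proposed alternative of iterating Theorem \ref{finite} down the rank hierarchy does not work on its own: Theorem \ref{finite} controls affine invariant submanifolds of rank at least two, but says nothing about how many rank-one submanifolds or Teichm\"uller curves sit inside a given container. The missing arithmetic input (the line-bundle splitting over primitive curves, and then the Hilbert modular variety argument in rank two, and finally LNW15 in rank one) is what the paper supplies and what your proposal leaves open.
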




\noindent
{\bf Plan of the paper:} In Section \ref{lyapunov} we establish 
the first part of Theorem 1 as a fairly easy consequence
of the results in \cite{H13}. 

Section \ref{zariski} introduces the idea of 
local monodromy groups and their Zarisky closures and uses it
to show Corollary \ref{zorich}. 
In Section \ref{galois}, this result together with 
group sieving and the first part of 
Theorem \ref{theolyapunov} leads
to the second part of Theorem \ref{theolyapunov} and to 
Corollary \ref{zorich}. 

Section \ref{localaffine} contains some properties of the
absolute period foliation of an affine invariant
manifold.
In Section \ref{local} we look at the local 
monodromy group of an affine invariant manifold and 
show that it is Zariski dense in the symplectic group of 
rank corresponding to the rank of the manifold. We then compare
in Section \ref{connections} 
the Chern connection on the Hodge bundle to the Gauss Manin
connection which  leads to the proof of the first part of Theorem \ref{finite}
in Secction \ref{uniqueness}. Section \ref{nested} uses information on the absolute period
foliation to complete the proof of Theorem \ref{finite}.
The proofs of Theorem \ref{cyclic} and 
Theorem \ref{finiteteich} are contained in Section 8.

\noindent
{\bf Acknowledgement:} During the various stages of this work, 
I obtained generous help from many collegues. I am particularly
grateful to Alex Wright for
pointing out a mistake in an earlier version of this work.
Both Matt Bainbridge and Alex Eskin notified me about parts
in an earlier version of the paper which needed clarification. 
Yves Benoist provided the proof of Proposition \ref{latticeordense}, 
and discussions with Curtis McMullen inspired me to the differential
geometric approach in Section \ref{nested}.
Part of this article is based on work which was supported by the National
Science Foundation under Grant No. DMS-1440140 while the author
was in residence at the MSRI in Berkeley, California, 
in spring 2015.

\section{Lyapunov exponents}\label{lyapunov}

In this section we consider any component
${\cal Q}$ of a stratum of area one 
abelian
differentials. The results in this section are equally
valid for any affine invariant submanifold for the 
$SL(2,\mathbb{R})$-action \cite{EMM15}, in particular
they hold true for all components of strata of quadratic 
differentials.  

The \emph{Teichm\"uller flow} $\Phi^t$ acts on ${\cal Q}$ preserving
a Borel probability measure $\lambda$ in the Lebesgue
measure class, the so-called \emph{Masur Veech measure}.
Let $h>0$ be the entropy of $\Phi^t$ 
with respect to the measure $\lambda$.

Denote by 
${\cal H}={\cal Q}\times H^1(S,\mathbb{R})\to {\cal Q}$ the
trivial vector bundle over ${\cal Q}$ with fibre
$H^1(S,\mathbb{R})$. 
The \emph{Kontsevich Zorich cocycle} over ${\cal Q}$
is an extension of the Teichm\"uller flow $\Phi^t$ on ${\cal Q}$ 
to a flow on ${\cal H}$ which
can roughly be described as follows. Consider a component
$\tilde {\cal Q}$ of the preimage of ${\cal Q}$ in the
Teichm\"uller space of marked area one abelian differentials.
Then ${\cal Q}$ is the quotient of $\tilde {\cal Q}$
under the action of the stabilizer ${\rm Stab}(\tilde {\cal Q})$ 
of $\tilde {\cal Q}$
in the mapping class group ${\rm Mod}(S)$. 
For $q\in \tilde {\cal Q}$ and
large $T$, the differential
$\Phi^Tq$ can be brought back to a fixed fundamental
domain for the action of ${\rm Stab}(\tilde {\cal Q})$ 
by an element $\phi\in {\rm Stab}(\tilde {\cal Q})$.
The action of $\phi$ on the first cohomology group
$H^1(S,\mathbb{R})$ is essentially the Kontsevich Zorich 
cocycle. 

The Kontsevich Zorich cocycle can also be described 
using the \emph{Gauss Manin connection}
on ${\cal H}$. The Gauss Manin connection is a flat 
connection on ${\cal H}$ 
defined by local trivializations which identify
nearby integral cohomology classes. 
Parallel transport for the Gauss Manin connection
defines a lift of the Teichm\"uller flow $\Phi^t$ to a flow
$\Theta^t$ on ${\cal H}$ which preserves the symplectic 
structure on ${\cal H}$ defined by the algebraic 
intersection form on $H^1(S,\mathbb{R})$.

There are some technical difficulties 
due to nontrivial 
point stabilizers for the action of the mapping class
group on Teichm\"uller space. To avoid dealing with this issue
(although this can be done with some amount of care)
we define the \emph{good subset} ${\cal Q}_{\rm good}$ 
of ${\cal Q}$ to be the set of all points $q\in {\cal Q}$ with the
following property. Let $\tilde {\cal Q}$ be 
a component of the preimage of ${\cal Q}$ in 
the Teichm\"uller space of marked abelian differentials
and let $\tilde q\in \tilde {\cal Q}$ be a lift of $q$;
then an element of ${\rm Mod}(S)$ which fixes $\tilde q$
acts as the identity on $\tilde {\cal Q}$ 
(compare \cite{H13} for more information on this technical
condition).
The good subset is open, dense and $\Phi^t$-invariant
\cite{H13}.

The Kontsevich Zorich cocycle is bounded, with values in 
the symplectic group $Sp(2g,\mathbb{R})$, and therefore its
\emph{Lyapunov exponents} for the invariant measure $\lambda$ 
are defined. These exponents measure the asymptotic 
growth rate of vectors along orbits of $\Phi^t$ which 
are typical for $\lambda$.
Since the Gauss Manin connection preserves the symplectic
structure on ${\cal H}$, these exponents 
are
invariant under multiplication with $-1$. 
Let 
$1=\kappa_1> \dots >  \kappa_g> 0$ be the 
largest $g$ Lyapunov exponents of 
the Teichm\"uller flow on ${\cal Q}$. 
That these exponents are all positive and pairwise distinct
was shown in \cite{AV07}.
For more general \emph{affine invariant manifolds}, 
the analogue statement need not hold true.  
We refer to \cite{A13} for a discussion
and examples.

Let 
\[\Gamma\subset {\cal Q}\] 
be the countable collection of all 
periodic orbits for $\Phi^t$ contained in ${\cal Q}$.
Denote by $\ell(\gamma)$ 
the period of $\gamma\in \Gamma$.
The orbit $\gamma\in \Gamma$ 
determines a conjugacy class in ${\rm Mod}(S)$
of pseudo-Anosov elements. Let $\phi\in {\rm Mod}(S)$ be
an element in this conjugacy class; 
then $A(\gamma)=\Psi(\phi)\in {\rm Sp}(2g,\mathbb{Z})$ is determined
by $\gamma$ up to conjugation. 
Furthermore, the
largest absolute value of an eigenvalue of $A(\gamma)$
equals $e^{\ell(\gamma)}$.
More precisely, 
the matrix $A(\gamma)$ is Perron Frobenius, with 
leading eigenvalue $e^{\ell(\gamma)}$, and  the eigenspace for
the eigenvalue $e^{\ell(\gamma)}$ is spanned by the 
real cohomology class defined by intersection with the
attracting measured geodesic lamination of $\phi$.

If we define 
$1= \alpha_1(\gamma)> \dots\geq \alpha_g(\gamma)\geq 0$
to be the quotients by $\ell(\gamma)$ of the logarithms 
of the $g$ largest absolute values of the eigenvalues of the matrix
$A(\gamma)$, ordered in decreasing order and counted with multiplicities, 
then the numbers $\alpha_i(\gamma)$ 
only depend on $\gamma$ but not on any choices made. 

Let $\epsilon >0$. For $\gamma\in \Gamma$ 
define
$\chi_\epsilon(\gamma)=1$ 
if $\vert \alpha_i(\gamma)-\kappa_i\vert <\epsilon$ for every
$i\in \{1,\dots,g\}$ 
and 
define $\chi_\epsilon(\gamma)=0$ otherwise.

For $R_1<R_2$ let $\Gamma(R_1,R_2)\subset \Gamma$ be the 
set of all periodic orbits of prime period contained
in the interval $(R_1,R_2)$.
For an open or closed subset $V$ of ${\cal Q}$ 
denote by $\chi(V)$ the characteristic function of $V$ and 
define 
\begin{align}H(V,R_1,R_2)& =
\sum_{\gamma\in \Gamma(R_1,R_2)}\int_\gamma\chi(V)
\quad \text{ and }\notag\\
H_\epsilon(V,R_1,R_2)&=
\sum_{\gamma\in \Gamma(R_1,R_2)}\int_\gamma\chi(V)\chi_\epsilon(\gamma).
\notag
\end{align}
Clearly we have 
\[H_\delta(V,R_1,R_2)\leq H_\epsilon(V,R_1,R_2)\leq H(V,R_1,R_2)\]
for all $\epsilon >\delta >0$.

Call a point $q\in {\cal Q}$ \emph{birecurrent} if 
it is contained in its own $\alpha$- and 
$\omega$ limit set. By the Poincar\'e recurrence theorem,
the set of birecurrent points
in ${\cal Q}$ has full Lebesgue measure.
In \cite{H13} (Corollary 4.8 and Proposition 5.4) we showed

\begin{proposition}\label{count}
For every good birecurrent point $q\in {\cal Q}_{\rm good}$, for every
neighborhood $U$ of $q$ in ${\cal Q}$ and for every
$\delta >0$ there is an open neighborhood $V\subset U$ 
of $q$ in ${\cal Q }$ 
and a number $t_0>0$ such that
\[H(V,R-t_0,R+t_0)e^{-hR}\in 
(2t_0\lambda(V)(1-\delta),2t_0\lambda(V)(1+\delta))\]
for all sufficiently large $R>0$.
\end{proposition}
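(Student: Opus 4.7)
The plan is to combine two ingredients, both of which are established in \cite{H13}: a precise count of periodic orbits in a length window, and an equidistribution statement for long periodic orbits with respect to the Masur-Veech measure $\lambda$. Since $\lambda$ is the (unique) $\Phi^t$-invariant probability measure of maximal entropy on $\cal{Q}$, a typical periodic orbit $\gamma$ of length $\ell(\gamma)\approx R$ should spend roughly $\ell(\gamma)\lambda(V)$ of its time inside a small open set $V\ni q$; multiplying by the number of such orbits in the window produces exactly the asymptotic in the statement.

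First I would use the local product structure of $\Phi^t$ at the birecurrent point $q$ to choose a flow box of the form $V^u\times V^s\times(-\tau,\tau)\subset U$, where $V^u,V^s$ are small open discs in the strong unstable and stable manifolds at $q$. Birecurrence of $q$ is used to guarantee that $q$ returns to this box in both time directions, which allows me to realize $q$ as an interior point of a local Markov-type section for the flow on which the first-return map is hyperbolic. Shrinking the box slightly to a smaller set $V$, and approximating $\chi(V)$ from above and below by continuous functions compactly supported in $U$, lets me absorb all boundary effects into the parameter $\delta$.

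Second, I would combine the window counting estimate
\[\#\Gamma(R-t_0,R+t_0)\sim\frac{2\sinh(ht_0)}{hR}\,e^{hR},\]
which is obtained from the main counting asymptotic $\#\{\gamma:\ell(\gamma)\le R\}\sim e^{hR}/(hR)$ applied at the two endpoints, with the equidistribution property: for most $\gamma$ with $\ell(\gamma)\in(R-t_0,R+t_0)$, one has $\int_\gamma\chi(V)\approx\ell(\gamma)\lambda(V)$, while the typical length is $R+O(t_0)$. Putting these together gives
\[H(V,R-t_0,R+t_0)\approx R\lambda(V)\cdot\frac{2\sinh(ht_0)\,e^{hR}}{hR}=\frac{2\sinh(ht_0)}{h}\,\lambda(V)\,e^{hR},\]
and choosing $t_0$ small enough that $\sinh(ht_0)/(ht_0)<1+\delta$ yields the desired two-sided bound.

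The main obstacle is the non-uniform hyperbolicity of the Teichm\"uller flow: Markov sections of $\cal{Q}$ are not compact, return times are not bounded, and the classical counting arguments for Axiom A flows do not apply out of the box. Following \cite{H13}, the way to deal with this is to exhaust $\cal{Q}$ by compact subsets, use large-deviation estimates to control the contribution of orbits that spend a significant proportion of time near infinity, and combine them with Athreya/Eskin-Mirzakhani-type estimates on the decay of $\lambda$ at infinity. The birecurrence hypothesis on $q$ is exactly what is needed to place $q$ inside sufficiently large compact subsets of the exhaustion and to implement the local Markov construction at $q$ itself.
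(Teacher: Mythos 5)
The paper does not offer a proof of this proposition: it is quoted verbatim as Corollary 4.8 and Proposition 5.4 of \cite{H13}, so there is no in-text argument to compare against. Your task was therefore to reconstruct the argument of \cite{H13}, and there the proposal has a genuine logical gap.

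Your step two invokes, as a black box, the weak-$*$ equidistribution of periodic orbits with respect to $\lambda$ (in the form $\int_\gamma\chi(V)\approx \ell(\gamma)\lambda(V)$ for most $\gamma$ in the length window), and combines it with the global count $\#\{\gamma:\ell(\gamma)\le R\}\sim e^{hR}/(hR)$. But in the Bowen-type scheme that \cite{H13} carries out for the Teichm\"uller flow, the equidistribution of periodic orbits is a \emph{consequence} of exactly this kind of localized, weighted count over a flow box: one proves $H(V,R-t_0,R+t_0)e^{-hR}\to 2t_0\lambda(V)$ for a family of flow boxes $V$ first, and only then deduces that the normalized averages of the orbital measures converge to $\lambda$. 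Given the global count, the proposition is essentially \emph{equivalent} to equidistribution tested against $\chi(V)$; citing equidistribution is thus assuming the conclusion. There is also no route around this through \cite{EMR12}, because that paper supplies the global asymptotic $e^{hR}/(hR)$ but not the refined statement that a definite proportion of that count is witnessed inside an arbitrarily small flow box; that refinement is precisely what the proposition asserts, and it does not follow from the global count alone without an equidistribution input.

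A secondary issue: the equidistribution you state is pointwise (``for most $\gamma$ individually''). That is false as stated --- there always exist long periodic orbits accumulating, say, on a fixed closed orbit or Teichm\"uller curve, and for such orbits $\int_\gamma\chi(V)/\ell(\gamma)$ is bounded away from $\lambda(V)$. What is true and what is actually needed is the averaged version, which, as just observed, is the proposition itself. Your first paragraph does contain the right raw materials --- the product flow box $V^u\times V^s\times(-\tau,\tau)$ at the birecurrent point $q$, and the observation that birecurrence makes $q$ an interior point of a local Markov section --- but the actual engine of the proof of \cite{H13} is not equidistribution. It is (i) the choice of nested sets $Z_0\subset Z_1\subset Z_2\subset Z_3\subset V\subset U$ with the property (Lemma 4.7 of \cite{H13}) that each connected component of $\Phi^R V\cap V$ meeting $\Phi^R Z_0\cap Z_1$ produces, via a shadowing/closing argument, a unique periodic orbit crossing $Z_3$ in an arc of length exactly $2t_0$; (ii) a Margulis-type estimate putting the $\lambda$-measure of such a component in $[e^{-hR}\lambda(V)(1+\delta)^{-1},\,e^{-hR}\lambda(V)(1+\delta)]$; and (iii) mixing of $\lambda$, which gives $\lambda(\Phi^R V\cap V)\sim\lambda(V)^2$ and hence pins the number of good components between $(1\pm\delta')e^{hR}\lambda(V)$. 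Multiplying the number of components by the $2t_0$ each contributes to $H$ yields the stated two-sided bound. This scheme is exactly what is executed, one level up, in the paper's own proof of Proposition \ref{lowerestimate}, and what your proposal should have reproduced instead of outsourcing to equidistribution.

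Your arithmetic with the window is sound once one has this machinery: $2\sinh(ht_0)/(ht_0)\to1$ as $t_0\to0$ lets one fold the error into $\delta$. But that computation cannot substitute for the closing-lemma-plus-mixing argument that actually produces the periodic orbits.
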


The proof of Proposition \ref{count} is based on a more technical 
result which will be used several times in the sequel. 
Lemma \ref{technical} below 
combines Lemma 4.7 and Proposition 5.4 of \cite{H13}.
For its formulation, we say that a closed curve $\eta$ in ${\cal Q}_{\rm good}$
defines the conjugacy class of a pseudo-Anosov mapping class $\phi\in 
{\rm Mod}(S)$ if the following holds true.  
Let $\tilde \eta$ be a lift of $\eta$
to an arc in the Teichm\"uller space of abelian differentials, parametrized
one some interval $[0,a]\subset \mathbb{R}$; then 
$\tilde \eta(a)=\psi(\tilde \eta(0))$ for a mapping class 
$\psi$ which is conjugate to $\phi$. This definition does not depend on any
choices made.

\begin{lemma}\label{technical}
Let $q\in {\cal Q}_{\rm good}$ be a good birecurrent point and let $\delta >0$. 
Then every
neighborhood $U$ of $q$ contains an open
and contractible neighborhood $V\subset {\cal Q}_{\rm good}$ of $q$,
such that there exists a nested sequence of neighborhoods 
$Z_0\subset Z_1\subset Z_2\subset V$ of $q$, 
with $Z_i$ closed and $Z_i$ contained in the interior of $Z_{i+1}$, 
and there is a number $R_0>0$ with the following 
properties.

Let $z\in Z_0$ and let $R>R_0$ be such that $\Phi^Rz\in Z_0$. 
Let $\hat E$ be the connected component
containing $z$ of the intersection $\Phi^tV\cap V$.
\begin{enumerate}
\item[$a)$]
The length of the connected subsegment of the orbit  
$\cup_{t\in \mathbb{R}}\Phi^tz\cap Z_0$ containing $\Phi^Rz$ 
equals $2t_0$.
\item[$b)$]
$\lambda(Z_0)>(1-\delta)\lambda(V)$, and 
the Lebesgue measure of the intersection 
$\Phi^RZ_1\cap Z_2\cap \hat E$ is contained in the interval
\begin{equation}\label{measureestimate} 
[e^{-hR}\lambda(V)(1-\delta),
e^{-hR}\lambda(V)(1+\delta)].\notag\end{equation}
\item[$c)$]
Connect $\Phi^Rz$ to $z$ by an arc in $V$ and 
let $\eta$ be the concatenation of the
orbit segment $\cup_{0\leq t\leq R}\Phi^tz$ with this arc.
We call $\eta$ a \emph{characteristic curve} of the
orbit segment $\cup_{t\in [0,R]} \Phi^tz$.  
There is a unique periodic orbit $\gamma$ 
for $\Phi^t$ of length at most $R+\delta$ 
which intersects 
$\Phi^RZ_2\cap Z_2\cap \hat E$. 
The curve $\eta$ and the orbit $\gamma$ define the same 
conjugacy class in ${\rm Mod}(S)$.
\end{enumerate}
\end{lemma}

Note that in the above statement, we slightly adjusted the 
choice of the sets $Z_i$ compared to the 
terminology in \cite{H13} 
for clarity of exposition. 

We use Lemma \ref{technical} to show

\begin{proposition}\label{lowerestimate}
For every
birecurrent point $q\in {\cal Q}_{\rm good}$, for every
neighborhood $U$ of $q$ in ${\cal Q}$ 
and for every $\delta >0$ there is 
an open neighborhood $V\subset U$ of $q$ in ${\cal Q}$
and a number $t_0>0$ 
with the properties stated in Proposition \ref{count} 
such that for every $\epsilon>0$ we have
\[\lim\inf_{R\to\infty}H_\epsilon(V,R-t_0,R+t_0)e^{-hR}
\geq 2t_0\lambda(V)(1-\delta).\]
\end{proposition}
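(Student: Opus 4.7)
The plan is to combine Proposition \ref{count} with Pesin-theoretic uniform control of the Kontsevich--Zorich cocycle: by choosing $q$ as a Lebesgue density point of a large compact Pesin set $\Lambda$, every long periodic orbit meeting $V\cap\Lambda$ will carry a return-map whose eigenvalues are close to the Lyapunov exponents, so that modulo a negligible error from orbits that avoid $\Lambda$, the $\chi_\epsilon$-weighted count and the unweighted count of Proposition \ref{count} asymptotically agree.

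First, by Oseledets together with simplicity of the spectrum (Avila--Viana), on a full $\lambda$-measure set the cocycle $\Theta^t$ admits a $\Theta^t$-equivariant measurable splitting $\mathcal H_p=\bigoplus_{1\le|i|\le g}E^i(p)$ into real lines with $\tfrac1t\log\|\Theta^tv\|\to\mathrm{sign}(i)\kappa_{|i|}$. Lusin--Egorov produces, for any $\eta>0$, a compact set $\Lambda\subset\mathcal Q$ with $\lambda(\Lambda)>1-\eta$ on which $p\mapsto E^i(p)$ is continuous with pairwise angles bounded below by some $c(\eta)>0$, and on which convergence is uniform: there is $T_0=T_0(\eta,\epsilon)$ such that $|t^{-1}\log\|\Theta^tv\|-\mathrm{sign}(i)\kappa_{|i|}|<\epsilon/2$ whenever $p\in\Lambda$, $\Phi^tp\in\Lambda$, $t\ge T_0$, and $0\ne v\in E^i(p)$. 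Take $q$ both birecurrent and a Lebesgue density point of $\Lambda$ (possible since $\lambda$-a.e.\ point has both properties), apply Proposition \ref{count} at $q$ with $\delta/3$ in place of $\delta$ to obtain $V\subset U$ and $t_0$, and shrink $V$ so that $\lambda(V\cap\Lambda)\ge(1-\eta)\lambda(V)$.

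The central claim is that for every $R\ge T_0$, any $\gamma\in\Gamma(R-t_0,R+t_0)$ meeting $V\cap\Lambda$ at some $p$ already satisfies $\chi_\epsilon(\gamma)=1$. Setting $\ell=\ell(\gamma)$, one has $\Phi^\ell p=p\in\Lambda$, and $\Theta^t$-equivariance of the splitting gives $\Theta^\ell E^i(p)=E^i(\Phi^\ell p)=E^i(p)$, so each one-dimensional line $E^i(p)$ is an invariant line of the return map $A(\gamma)=\Theta^\ell|_{\mathcal H_p}$ and therefore an eigenline. The Pesin estimate pins down the corresponding eigenvalue modulus to lie within $\epsilon\ell/2$ of $\mathrm{sign}(i)\kappa_{|i|}\ell$, and the uniform angle bound on $\Lambda$ guarantees these $2g$ eigenlines span $\mathcal H_p$. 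Dividing by $\ell$ yields $|\alpha_i(\gamma)-\kappa_i|<\epsilon$ for all $i$, i.e.\ $\chi_\epsilon(\gamma)=1$.

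Orbits contributing to $H(V,\cdot,\cdot)$ but not to $H_\epsilon(V,\cdot,\cdot)$ must avoid $V\cap\Lambda$ entirely, so their $\chi(V)$-integral equals their $\chi(V\setminus\Lambda)$-integral. A matching upper-bound refinement of the equidistribution estimate of \cite{H13}, applied to the open set $V\setminus\Lambda$ (with the same $t_0$), gives $H(V\setminus\Lambda,R-t_0,R+t_0)\le 2t_0\eta\lambda(V)(1+\delta/3)e^{hR}$; subtracting this from the lower bound of Proposition \ref{count} and choosing $\eta$ small relative to $\delta$ yields the desired $\liminf$. The main obstacle is the subtle identification at periodic points: since periodic orbits are $\lambda$-null, the Oseledets decomposition at $p\in\Lambda$ defined intrinsically by $A(\gamma)$ may a priori differ from the continuous extension from $\Lambda$; the resolution exploits continuity of the splitting on the closed set $\Lambda$ together with equivariance under $\Theta^t$ to identify the two, converting finite-time cocycle norm control into eigenvalue control for the symplectic monodromy.
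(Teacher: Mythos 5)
The argument breaks down at the final subtraction step, and the breakdown is not a technicality but exposes a conceptual problem with the Pesin-set approach here.

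Your key estimate is that $H(V\setminus\Lambda,R-t_0,R+t_0)\le 2t_0\,\eta\,\lambda(V)(1+\delta/3)e^{hR}$, i.e.\ that the orbit-time spent in $V\setminus\Lambda$ is controlled by the Lebesgue measure of $V\setminus\Lambda$. This is precisely the kind of \emph{upper} bound that equidistribution of periodic-orbit measures does not deliver for a set like $V\setminus\Lambda$. The measures $e^{-hR}\sum_{\gamma\in\Gamma(R-t_0,R+t_0)}\delta_\gamma$ converge weak-$*$ to a multiple of $\lambda$, and weak-$*$ convergence gives $\limsup$-upper bounds only on closed sets and $\liminf$-lower bounds only on open sets. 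The set $V\setminus\Lambda$ is open but its closure is $\overline V$ (as $\Lambda$ is closed and, as explained below, misses periodic orbits), so the only upper bound extractable from equidistribution is $\limsup e^{-hR}H(V\setminus\Lambda,\cdot,\cdot)\le 2t_0\lambda(\overline V)$ --- no improvement over $H(V,\cdot,\cdot)$ itself. Worse, the bound you want is in fact false: if $\Lambda$ is obtained by Lusin--Egorov from the full-measure Oseledets set $\Omega$ then $\Lambda\subset\Omega$, so any periodic point $p\in\Lambda$ is Lyapunov-regular with exponents exactly $\kappa_i$, forcing $\alpha_i(\gamma)=\kappa_i$ on the nose. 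Generic periodic orbits do not satisfy this (their $\alpha_i$ are algebraic of degree $g$, by the other half of Theorem~\ref{theolyapunov}), so they miss $\Lambda$ entirely, and then $\int_\gamma\chi(V\setminus\Lambda)=\int_\gamma\chi(V)$. Summing, $H(V\setminus\Lambda,\cdot,\cdot)$ is essentially $H(V,\cdot,\cdot)$, and the subtraction gives nothing. Your ``central claim'' is true but vacuous for exactly the orbits you need to count, and the reduction to $H(V\setminus\Lambda)$ inherits that vacuity.

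The paper's proof avoids this trap by never trying to evaluate the asymptotic Oseledets data at a periodic point. It introduces the \emph{finite-time} quantity $\kappa_i(t,z)$, a minimax of operator norms of $\Theta^t(z)$ over symplectic subspaces of the fibre, which is defined and continuous for \emph{every} $z$ (not just $\lambda$-a.e.). Property~(4) from the shadowing construction of \cite{H13} --- $\vert\kappa_i(R,z)-\alpha_i(\gamma)\vert\le\epsilon/2$ --- is the decisive input: it transports control of $\kappa_i(R,z)$ along the genuine orbit segment $\{\Phi^tz\}_{0\le t\le R}$ to control of the eigenvalue logarithms of the nearby \emph{shadowing} periodic orbit $\gamma$, even though $\gamma$ never passes through the orbit segment or through any Pesin set. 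Oseledets and Egorov are then applied not to the periodic orbits but to $\kappa_i(R,\cdot)$ on a positive-measure set $B\subset Z_0$, yielding $\vert\kappa_i(R,u)-\kappa_i\vert\le\epsilon/2$ for $u\in B$ and $R$ large; mixing pairs $B$ with $\Phi^RB$ and the counting estimate of Proposition~\ref{count} finishes. This is the step your proposal has no analogue of. To salvage your strategy you would need a quantitative statement comparing the cocycle along an orbit segment close to $\Lambda$ with the monodromy of the orbit it shadows --- which is exactly what $\kappa_i(t,z)$ and property~(4) supply, and what ``continuity of the splitting on $\Lambda$'' does not, because the splitting is an asymptotic object while the comparison has to be made at a finite time $R$ between two distinct orbits.
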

\begin{proof} 
Let $\Vert\,\Vert$ be the \emph{Hodge norm} on 
the bundle ${\cal H}\to {\cal Q}$ 
(see \cite{ABEM12} for definitions and the most
important properties). Denote as before by
$\Theta^t$ the lift of the Teichm\"uller flow to a flow on ${\cal H}$ 
defined 
by parallel transport for the Gauss Manin 
connection. Recall that $\Theta^t$ preserves
the symplectic structure on ${\cal H}$, but in general it
does not preserve the Hodge
norm. 
For $z\in {\cal Q}$ let ${\cal H}_z$ be the fibre of ${\cal H}$ at 
$z$. For  $1\leq i\leq g$ and for 
$t>0$ let 
\[\zeta_i(t,z)\] be the minimum of the 
operator norms of the restriction of $\Theta^t (z)$
to a symplectic subspace of 
${\cal H}_z$ of real dimension $2(g-i+1)$.
 Define
\[\kappa_i(t,z)=\frac{1}{t}\log \zeta_i(t,z).\]

Let $\epsilon >0, \delta >0$ and let $U$ be a neighborhood of 
a birecurrent point $q\in {\cal Q}_{\rm good}$. 
Since the Kontsevich Zorich cocycle is locally constant
(or, equivalently, the Gauss Manin connection is flat),
we can find a collection of nested neighborhoods 
$Z_0\subset Z_1\subset Z_2\subset V\subset U$
with the properties in Lemma \ref{technical} and such that
furthermore, with the notations from the lemma, 
if $z\in Z_0,R>R_0$ and if $\Phi^Rz\in Z_0$ then the 
periodic orbit $\gamma$ for $\Phi^t$ determined by the 
characteristic curve $\eta$ of the orbit segment
$\cup_{t\in [0,R]}\Phi^tz$ satisfies
\begin{equation}\label{alphakappa}
\vert \kappa_i(R,z)-\alpha_i(\gamma)\vert \leq \epsilon/2.
\end{equation} 

Namely, for a sufficiently small contractible 
neighborhood $V$ of $q$
in ${\cal Q}_{\rm good}$, the trivialization 
of ${\cal H}\vert V$ defined by the Gauss Manin connection
almost preserves the 
Hodge norm. Then the estimate
(\ref{alphakappa}) holds true if we replace $\alpha_i(\gamma)$ be the
$i$-th absolute value in decreasing order of an eigenvalue 
of the symplectic transformation $A_\eta$ of
${\cal H}_z$ which is 
defined by parallel transport for the Gauss Manin
connection along a characteristic curve $\eta$ for 
the orbit segment $\cup_{t\in [0,R]}\Phi^tz$. 
But by property (c) in Lemma \ref{technical}, the characteristic curve 
$\eta$ defines the same conjugacy class of a pseudo-Anosov mapping
class as $\gamma$. This means that the numbers $\alpha_i(\gamma)$ are
precisely the absolute values of the eigenvalues of the 
transformation $A_\eta$. Thus the estimate 
for the characteristic curve $\eta$ and the transformation 
$A_\eta$ implies the estimate for
$\gamma$.

By Oseledec's theorem and ergodicity, 
there is number $R(\epsilon)>R_0$ and a Borel subset
$B$ of $Z_0$ of measure $\lambda(B)>\lambda(Z_0)(1-\delta)$
with the following property. Let $u\in B$ and let $R>R(\epsilon)$; 
then 
$\vert \kappa_i(R,u)-\kappa_i\vert \leq \epsilon/2$.

Since the Lebesgue measure is mixing for the 
Teichm\"uller flow,
there is a number $R_1>R(\epsilon)$ such that
\[\lambda(\Phi^R B\cap B)\geq \lambda(B)^2(1-\delta)
\geq \lambda(Z_0)^2(1-\delta)^3
\geq \lambda(V)^2(1-\delta)^5\] for all 
$R\geq R_1$. By the estimate (b) in Lemma \ref{technical}, 
this implies that the number of 
components of the intersection 
$\Phi^R V\cap V$ 
containing points in $\Phi^R B\cap B$ is at least 
$e^{hR}\lambda(V)(1-\delta)^6$. By property (c) 
in Lemma \ref{technical},  
for each such component there is a periodic orbit of $\Phi^t$ 
passing through $Z_2\subset V$.
The estimate (\ref{alphakappa}) together with the definition of $B$ yields that  
each such orbit $\gamma$ satisfies $\chi_\epsilon(\gamma)=1$. 
Thus by (a) of Lemma \ref{technical}, 
each such component of intersection $\Phi^RV\cap V$ 
contributes $2t_0$ to the value $H_\epsilon(V,R-t_0,R+t_0)$.
Together this shows the proposition.
\end{proof}

As a corollary, we obtain the first part of Theorem \ref{theolyapunov}.
We formulate it more generally for strata of abelian or
quadratic differentials. As before, $\kappa_i$ denotes
the $i$-th Lyapunov exponent of the Kontsevich Zorich cocycle.

\begin{corollary}\label{sec2typical}
For $\epsilon >0$, the set $\{\gamma\in \Gamma\mid 
\vert\alpha_i(\gamma)-\kappa_i\vert <\epsilon\}$
$(1\leq i\leq g)$ is typical.
\end{corollary}
\begin{proof}
In \cite{H13} the following is shown.
As $R\to \infty$, the measures 
\[\mu_R=e^{-hR}\sum_{\gamma\in \Gamma,\ell(\gamma)\leq R}\delta(\gamma)\]
converge weakly to the Lebesgue measure on ${\cal Q}$.
The Lebesgue measure of ${\cal Q}-{\cal Q}_{\rm good}$ vanishes,
so it suffices to study the Lebesgue measure on ${\cal Q}_{\rm good}$. 

By \cite{EM11,EMR12,H11}, 
there is no escape of mass: We have
\begin{equation}\label{escapeofmass}
\sharp \{\gamma\in \Gamma\mid \ell(\gamma)\leq R\}\sim 
\frac{e^{hR}}{hR}.\end{equation}
In fact, there is a compact subset $K$ of ${\cal Q}$ such that
the growth rate of all periodic orbits
which do not intersect $K$ is strictly smaller than $h$.

Let $\epsilon >0$. By Proposition \ref{count} and 
Proposition \ref{lowerestimate}, the measures 
\[\mu_R=e^{-hR}\sum_{\gamma\in \Gamma,\ell(\gamma)\leq R}
\chi_\epsilon(\gamma)\delta(\gamma)\]
also converge weakly to the Lebesgue measure on ${\cal Q}$. 
Since there is no escape of mass, 
as in \cite{H13} it now follows from  
(\ref{escapeofmass}) that periodic orbits 
$\gamma$ with $\chi_\epsilon(\gamma)>0$ are typical.
\end{proof}

\section{Local Zariski density: The Zorich conjecture}\label{zariski}

In this section we prove the Zorich
conjecture which is stated as Corollary \ref{zorich} 
in the introduction.
Throughout the section, 
we denote by ${\cal Q}$ a component of a stratum
in the moduli space of area one 
abelian differentials on $S$,  
and by $\tilde {\cal Q}$ a component of its preimage in the
Teichm\"uller space of abelian differentials.
As in Section \ref{lyapunov}, we denote by 
${\cal Q}_{\rm good}$ the open dense $\Phi^t$-invariant 
subset of good points. 
All results of this section are also true for components of quadratic
differentials, however the proof requires some more details which 
we postpone to forthcoming work.

A periodic orbit of $\Phi^t$ on ${\cal Q}$ 
determines a conjugacy class in $Sp(2g,\mathbb{Z})$.
However, it will be convenient to look at actual elements of 
$Sp(2g,\mathbb{Z})$ rather than at conjugacy classes. 

Choose a birecurrent point 
$q\in {\cal Q}_{\rm good}$. Let 
$\delta >0$ and let
$Z_0\subset Z_1\subset Z_2\subset V\subset {\cal Q}_{\rm good}$ 
be a nested family of neighborhoods 
of $q$ in ${\cal Q}_{\rm good}$ 
as in Lemma \ref{technical}.

For sufficiently large $R_0>0$ 
let $z\in Z_0$ and let $T>R_0$ be such that
$\Phi^Tz\in Z_0$. 
A characteristic curve of this orbit segment 
determines uniquely a periodic
orbit $\gamma$ of $\Phi^t$ which intersects $Z_2$ in an arc 
of length $2t_0$. There may be more than one such 
intersection arc, but there is a unique arc determined by
the component of the intersection $Z_2\cap \Phi^TZ_2$ containing 
the point $z$ as described in (c) of Lemma \ref{technical}. 
Choose the midpoint of this intersection arc 
as a basepoint for $\gamma$ and as an initial point for 
a parametrization of $\gamma$. 

Let $\Gamma_0$ be 
the set of all parametrized 
periodic orbits of this form for points $z\in Z_0$ with
$\Phi^Tz\in Z_0$.
By Lemma \ref{technical},
the map which associates to a component of 
$\Phi^TV\cap V$ containing points in 
$\Phi^TZ_0\cap Z_0$ the corresponding parametrized 
periodic orbit in $\Gamma_0$ is a bijection.

Fix once and for all a 
lift $\tilde V$ 
of the contractible set $V$ to $\tilde {\cal Q}$. 
A periodic orbit
$\gamma$ which intersects $Z_2$ in an arc of length $2t_0$ 
lifts to a subarc of a flow line 
of the Teichm\"uller flow on $\tilde {\cal Q}$ 
with starting point in $\tilde V$. The 
endpoints of this arc are identified by a 
pseudo-Anosov element 
$\Omega(\gamma)\in  
{\rm Mod}(S)$.

The following \emph{shadowing property} 
is a variation of Lemma \ref{technical},
using the same notations. Versions of this lemma 
are familiar in hyperbolic dynamics.

\begin{proposition}\label{grouplaw}
For $\gamma_1,\dots,\gamma_k\in \Gamma_0$, there is 
a point $z\in Z_2$, and there are numbers
$0<t_1<\dots <t_k$ with the following properties.
\begin{enumerate}
\item $\Phi^{t_i}z\in Z_2$.
\item For each $i\leq k$, a characteristic curve 
$\zeta_i$ of the orbit segment
$\{\Phi^tz\mid t_{i-1}\leq t\leq t_i\}$ defines the same conjugacy class
in ${\rm Mod}(S)$ as the  
periodic orbit $\gamma_i$. 
\item 
There is a parametrized periodic orbit $\gamma$ for $\Phi^t$
with initial point in $Z_2$ which defines the same
conjugacy class in ${\rm Mod}(S)$ as a 
characteristic curve $\zeta$ of 
the orbit segment $\{\Phi^t z\mid 0\leq t\leq t_k\}$.
\item 
$\Omega(\gamma)=\Omega(\gamma_k)\circ \dots\circ \Omega(\gamma_1)$.
\end{enumerate}
\end{proposition}
\begin{proof} In the case that the arcs $\gamma_i$ are
contained in a fixed compact invariant subset $K$ for $\Phi^t$ and that
the set $V$ is chosen small in dependence of $K$,
the lemma is identical with the slight weakening of 
Theorem 4.3 of \cite{H10}.  That the 
statement holds true in the form presented here is immediate from the
construction of the set $Z_2$ in \cite{H13} and Proposition 5.4 of 
\cite{H13} which is based on an extension of Theorem 4.3 of \cite{H10}
to arbitrary orbits of the Teichm\"uller flow which recur to $Z_2$.
\end{proof}

As a consequence, the subsemigroup 
$\Omega(\Gamma_0)$ of ${\rm Mod}(S)$ generated by 
$\{\Omega(\gamma)\mid \gamma\in \Gamma_0\}$ 
consists of pseudo-Anosov elements whose 
corresponding periodic orbits are contained in the stratum
${\cal Q}$ and pass
through the set $Z_2$. This can be viewed as a version of 
Rauzy-Veech induction as used in \cite{AV07,AMY16} which is valid
for strata of quadratic differentials as well, or as a version of symbolic
dynamics for the Teichm\"uller flow on strata as in \cite{H11,H16}. 

Our next goal is to obtain information on the image of
this subsemigroup under the homomorphism
$\Psi:{\rm Mod}(S)\to SL(2g,\mathbb{Z})$. 
For this we choose an odd prime $p$ and study the image
of $\Psi\Omega(\Gamma_0)$ under the natural reduction map
\[\Lambda_p:Sp(2g,\mathbb{Z})\to Sp(2g,F_p)\]
where $F_p$ is the field with $p$ elements.
Denote by $\iota$ the symplectic form on $F_p^{2g}$ which is
preserved by $Sp(2g,F_p)$.

The following lemma relies on the results in \cite{Hl08}.
For its formulation, define a \emph{transvection} in 
$Sp(2g,F_p)$ to be a map $A\in Sp(2g,F_p)$ which 
fixes a subspace of $F_p^{2g}$ of codimension one and has determinant one
(see \cite{Hl08}). Any map of the form 
\[\alpha\to \alpha +\iota(\alpha,\beta)\beta\] for some 
$0\not=\beta\in F_p^{2g}$
(here as before, $\iota$ is the symplectic form)
is a transvection. We call this map 
a \emph{transvection by $\beta$}.

\begin{lemma}\label{transvection}
Let $p\geq 3$ be an odd prime and let 
$G<Sp(2g,F_p)$ be a subgroup generated by $2g$ transvections
by the elements of a set ${\cal E}=\{e_1,\dots,e_{2g}\}\subset 
F_p^{2g}$ which spans $F_p^{2g}$.
Assume that there is no nontrivial partition 
${\cal E}={\cal E}_1\cup {\cal E}_2$ so that 
$\iota(e_{i_1},e_{i_2})=0$ for all $e_{i_j}\in {\cal E}_j$. 
Then $G=Sp(2g,F_p)$.
\end{lemma}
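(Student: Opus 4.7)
The plan is to first check irreducibility of the $G$-action on $F_p^{2g}$ using the hypothesis on $\mathcal{E}$, and then invoke a classification of irreducible transvection-generated subgroups of $Sp(2g,F_p)$ from \cite{Hl08} to conclude $G = Sp(2g,F_p)$.

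\textbf{Step 1: Setup.} For each $i$ let $T_i \in G$ denote the transvection with $e_i$, so $T_i(\alpha) = \alpha + \iota(\alpha,e_i) e_i$; in particular $T_i$ fixes the hyperplane $e_i^\perp$ pointwise and $T_i - \mathrm{Id}$ has image $F_p \cdot e_i$. Note also the elementary fact that if $W \subset F_p^{2g}$ is a $T_i$-invariant subspace and there exists $w \in W$ with $\iota(w,e_i) \neq 0$, then $e_i \in W$; otherwise $W \subset e_i^\perp$.

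\textbf{Step 2: Irreducibility.} Let $W \subset F_p^{2g}$ be a nonzero $G$-invariant subspace. Partition indices into
\[ I_1 = \{ i : e_i \in W \}, \qquad I_2 = \{ i : W \subset e_i^\perp \}, \]
which by Step 1 covers $\{1,\dots,2g\}$. If $I_2 = \{1,\dots,2g\}$ then $W \subset \bigcap_i e_i^\perp = 0$ since $\mathcal{E}$ spans, contradicting $W \neq 0$. If $I_1 = \{1,\dots,2g\}$ then $W$ contains $\mathcal{E}$ and hence $W = F_p^{2g}$. Otherwise both $I_1$ and $I_2$ are nonempty, and for every $i \in I_1, j \in I_2$ we have $e_i \in W \subset e_j^\perp$, so $\iota(e_i, e_j) = 0$. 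Setting $\mathcal{E}_k = \{ e_i : i \in I_k \}$ would then yield a nontrivial partition of $\mathcal{E}$ violating the hypothesis. Hence $G$ acts irreducibly on $F_p^{2g}$.

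\textbf{Step 3: Apply the transvection classification.} By the main theorem of \cite{Hl08} (building on McLaughlin and Kantor), any irreducible subgroup of $Sp(2g,F_p)$ ($p$ odd) generated by transvections is, up to the obvious exceptions in very small dimension, the full symplectic group. The generating set $\{T_1,\dots,T_{2g}\}$ corresponds via $T_i \leftrightarrow e_i$ to a spanning collection of $2g$ vectors whose pairing graph is connected (this is precisely the non-partition hypothesis), and this rules out the imprimitive/reducible configurations in the classification. Hence $G = Sp(2g,F_p)$.

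\textbf{Main obstacle.} The delicate point is Step 3: ensuring that the classification of irreducible transvection-generated subgroups of $Sp(2g,F_p)$ really forces $G$ to be the full symplectic group rather than a proper irreducible subgroup such as a subfield symplectic group $Sp(2g,F_{p'})$ embedded inside, a symmetric group in a permutation-type representation, or one of the exceptional groups appearing in small rank. The non-partition hypothesis together with the cardinality $|\mathcal{E}| = 2g = \dim F_p^{2g}$ needs to be matched against the geometric constraints on transvection configurations in \cite{Hl08}; the bookkeeping here, and verifying that the transvections $T_i$ cannot all lie inside any proper irreducible transvection subgroup on the nose, is where the real work lies.
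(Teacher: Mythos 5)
Your proposal follows the same route as the paper: establish irreducibility of the $G$-action from the non-partition (connectedness) hypothesis, then invoke Hall's classification of irreducible transvection-generated subgroups of $Sp(2g,F_p)$. The irreducibility argument in Step 2 is in substance the paper's argument, just reorganized around the dichotomy $e_i\in W$ versus $W\subset e_i^\perp$. One small repair is needed there: the index sets $I_1$ and $I_2$ you define need not be disjoint, so they do not directly form a partition of ${\cal E}$. Instead take ${\cal E}_1=\{e_i: i\in I_1\}$ and ${\cal E}_2={\cal E}\setminus {\cal E}_1$; since $I_1\cup I_2=\{1,\dots,2g\}$, every $e_j\in{\cal E}_2$ has $j\in I_2$, so $\iota(e_i,e_j)=0$ for all $e_i\in{\cal E}_1$, $e_j\in{\cal E}_2$, and since $I_1$ is nonempty and proper the partition is nontrivial, contradicting the hypothesis.

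The ``main obstacle'' you flag in Step 3 is where the paper simply cites Theorem 3.1 of \cite{Hl08}, after observing that the transvections $A_i$ have order $p$, hence odd order; that odd-order condition is a hypothesis of Hall's theorem, and together with irreducibility it yields $G=Sp(2g,F_p)$ with no further work. Two of the alternative cases you worry about are excluded for structural reasons and do not require any bookkeeping with the configuration ${\cal E}$: since $p$ is an odd \emph{prime}, $F_p$ has no proper subfield, so a subfield symplectic group cannot occur; and the symmetric-group and low-rank exceptional configurations are ruled out by the odd-order-of-transvections hypothesis in \cite{Hl08}. So once irreducibility is established, nothing beyond citing the right theorem of Hall remains to be done --- the cardinality $|{\cal E}|=2g$ and the precise pairing pattern play no further role.
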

\begin{proof} For each $i$ write 
$A_i(x)=x+\iota(x,e_i)e_i$. 
Let $G<Sp(2g,F_p)$ be the subgroup 
generated by the transvections 
$A_1,\dots,A_{2g}$. Since the vectors $e_1,\dots,e_{2g}$ span 
$F_p^{2g}$, the intersection of the invariant subspaces 
of the transvections 
$A_i$ $(i\leq 2g)$ is trivial.

We claim that the standard representation of $G$
on $F_p^{2g}$ is irreducible. 
Namely, assume to the contrary that there is an
invariant proper linear subspace $W\subset F_p^{2g}$. 
Let $0\not=w\in W$; then 
there is at least one $i$ so that 
$\iota(w,e_i)\not=0$. By invariance, we have  
$w+\iota(w,e_i)e_i\in W$ and hence 
$e_i\in W$ since $F_p$ is a field.

As a consequence, $W$ is spanned by some of the $e_i$, say by
$e_{i_1},\dots,e_{i_k}$, and if $j$ is such that
$\iota(e_{i_s},e_j)\not=0$ for some $s\leq k$ 
then $e_j\in W$. However, this implies that
$W=F_p^{2g}$ by the assumption on the set ${\cal E}=\{e_i\}$.

To summarize, $G$ is an irreducible subgroup of 
$Sp(2g,F_p)$ generated by transvections
(where irreducible means that the standard
representation of $G$ on $F_p^{2g}$ is irreducible).
Furthermore, as $p$ is an odd prime by assumption,  
the order of 
each of these transvections is not divisible by $2$.
Theorem 3.1 of \cite{Hl08} now yields that 
$G=Sp(2g,F_p)$ which is 
what we wanted to show.
\end{proof}

\begin{remark} 
By Proposition 6.5 of \cite{FM12}, Lemma \ref{transvection} is not
true for $p=2$.
\end{remark}

The following proposition is the main step towards the proof
of Corollary \ref{zorich} and is of independent interest. 
A slightly weaker version 
for affine invariant manifolds will be
established in Section \ref{local}.

\begin{proposition}\label{surjectivity}
\begin{enumerate}
\item For an odd prime $p\geq 3$ 
the subgroup of $Sp(2g,F_p)$ generated by 
$\{\Lambda_p\Psi(\Omega(\gamma))\mid
\gamma\in \Gamma_0\}$ equals the entire group $Sp(2g,F_p)$.
\item If ${\cal Q}$ is a stratum of abelian 
differentials with
at least one simple zero then the subgroup of 
${\rm Mod}(S)$ generated by $\{\Omega(\gamma)\mid \gamma\in \Gamma_0\}$
equals the entire mapping class group.
\end{enumerate}
\end{proposition}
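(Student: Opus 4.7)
My plan is to reduce both parts to exhibiting specific generators of the target group inside the semigroup $\Omega(\Gamma_0)$, and then to realize those generators by combining periodic orbits via Proposition \ref{grouplaw}. Since every $\Omega(\gamma)$ is pseudo-Anosov, it is never itself a transvection or a Dehn twist; the natural source of useful building blocks is the Thurston--Veech construction, which produces pseudo-Anosovs of the form $T_a^m T_b^{-n}$ from a pair of filling multicurves $a,b$, and whose image under $\Psi$ is the corresponding composition $S_a^m S_b^{-n}$ of symplectic transvections along $[a],[b]\in H_1(S,\mathbb{Z})$.

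For part (1), I would apply Lemma \ref{transvection}: it suffices to find $\gamma_1,\ldots,\gamma_{2g}\in\Gamma_0$ so that $\Lambda_p\Psi\Omega(\gamma_i)$ is a transvection with vector $e_i\in F_p^{2g}$, where $\{e_1,\ldots,e_{2g}\}$ spans $F_p^{2g}$ and does not partition into two mutually symplectically orthogonal subsets. A Thurston--Veech element of the form $T_a^p T_b^{-1}$ maps under $\Psi$ to $S_a^p S_b^{-1}$, which reduces modulo $p$ to $S_b^{-1}$, a transvection with vector $[b]$. Running this construction as $b$ varies over a well-chosen spanning collection of non-separating simple closed curves — chosen so that their pairwise intersection pattern is connected, which rules out the forbidden partition — yields the required $2g$ elements, modulo the matter of forcing the associated orbits into $\Gamma_0$.

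For part (2) I would invoke Humphries' theorem, which presents $\mathrm{Mod}(S)$ by Dehn twists along $2g+1$ explicit non-separating simple closed curves. The idea is parallel: isolate each Humphries generator inside the group generated by $\Omega(\Gamma_0)$ by using the identity $T_a = (T_a T_b^{-1})\cdot T_b$ to extract a single twist from a product of pseudo-Anosov factors, iterated so that every factor in the final expression lies in $\Omega(\Gamma_0)$ by Proposition \ref{grouplaw}. The simple-zero hypothesis enters here because on strata with a single zero of higher order, cylinder homology classes satisfy extra symplectic constraints, which obstruct the realization of a full Humphries generating set via cylinder Dehn twists.

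The main obstacle in both parts is the condition $\gamma\in\Gamma_0$, namely that the orbit meet the tiny neighborhood $Z_2$ of $q$; generic Thurston--Veech surfaces sit elsewhere in ${\cal Q}$. I would deal with this by a conjugation argument: for any target periodic orbit $\gamma'$, density of $\Phi^t$-orbits starting in $Z_2$ gives an orbit segment from $Z_2$ to a point close to $\gamma'$, and Proposition \ref{grouplaw} lets me splice this segment together with $\gamma'$ and its reverse to produce an orbit in $\Gamma_0$ whose $\Omega$-image is conjugate to $\Omega(\gamma')$ by another element of $\Omega(\Gamma_0)$. Since $\mathrm{Sp}(2g,F_p)$ is normally generated by any transvection and $\mathrm{Mod}(S)$ is normally generated by any Dehn twist along a non-separating simple closed curve, surjectivity up to conjugation is enough. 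The most delicate point is checking that the conjugating orbit segment can be realized inside $\Omega(\Gamma_0)$ while preserving the prescribed action on $H_1$, so that the bookkeeping closes up and the required transvections (resp.\ Dehn twists) actually lie in the semigroup, not merely in its normal closure.
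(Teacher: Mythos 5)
Your proposal and the paper's proof diverge significantly, and there is a genuine gap at the point you yourself flag as delicate; it is worth spelling out why.

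Your mechanism for producing transvections mod $p$ — a Thurston--Veech pseudo-Anosov $T_a^p T_b^{-1}$, whose $\Psi$-image reduces to the single transvection $S_b^{-1}$ modulo $p$ since $S_a^p\equiv\mathrm{Id}\bmod p$ — is a nice idea and different from what the paper does, but it trades one hard problem for another: a Thurston--Veech surface built from the pair $(a,b)$ lives on some square-tiled surface in some stratum, and there is no control making that stratum equal the given ${\cal Q}$, let alone making the axis meet a prescribed tiny set $Z_2$. The paper sidesteps both issues by working entirely with train tracks: it constructs a train track $\tau$ for ${\cal Q}$ (via a train track $\eta$ for the adjacent stratum with one simple and one high-order zero, and the orientability/birecurrence criteria of \cite{H11}) in which the Humphries curves are embedded, and observes that for any pseudo-Anosov $\phi$ with $\phi\tau\prec\tau$ mapping every branch onto $\tau$, the compositions $\phi\circ T_c^{\sigma(c)}$ are again pseudo-Anosov with the same carrying property and hence define periodic orbits \emph{in ${\cal Q}$}. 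This delivers the integral transvections (and actual Dehn twists for part (2)) rather than only their reductions, so part (1) follows from part (2) plus Lemma \ref{transvection}.

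The more serious issue is the step you call the most delicate one. Your splicing construction produces an orbit whose $\Omega$-image is a conjugate $x\Omega(\gamma')x^{-1}$, where the conjugator $x$ is the mapping class realized by the connecting segment; nothing forces $x$ to lie in the group generated by $\Omega(\Gamma_0)$, and Proposition \ref{grouplaw} only concatenates elements that already return to $Z_2$, so it cannot produce $x$ as a word in $\Omega(\Gamma_0)$. Falling back on normal generation does not help: knowing that $\langle\Omega(\Gamma_0)\rangle$ contains some conjugate $xtx^{-1}$ of a transvection (resp.\ Dehn twist) only shows the normal closure is everything, not the group itself. The paper's resolution is precisely engineered to avoid this: it conjugates $\beta=T_c^{\sigma(c)}$ not by an arbitrary connecting segment but by $\phi^k$, where $\phi$ is \emph{already} the pseudo-Anosov whose periodic orbit passes through $Z_0$ (so $\phi^k\in\Omega(\Gamma_0)$), and then uses weak-$*$ convergence of the measures supported on the orbits of $\phi^k\circ\beta\circ\phi^k$ towards the measure on the axis of $\phi$ to conclude that for large $k$ these orbits also pass through $Z_0$, hence $\phi^k\circ\beta\circ\phi^k\in\Omega(\Gamma_0)$. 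Only then is the identity $\beta=\phi^{-k}(\phi^k\beta\phi^k)\phi^{-k}$ a genuine factorization inside $\langle\Omega(\Gamma_0)\rangle$. Without this specific choice of conjugator and the equidistribution input, the bookkeeping does not close, and as written your argument stalls exactly there.

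One smaller point: your heuristic for why the simple-zero hypothesis is needed in part (2) (extra symplectic constraints on cylinder classes) is not quite what drives the paper. The paper uses the simple zero to get the whole Humphries system (including the curve $m_2$) carried by a train track for the stratum; without a simple zero, $m_2$ must be dropped, and the remaining curves still generate $Sp(2g,F_p)$ (hence part (1) holds in general) but not all of $\mathrm{Mod}(S)$.
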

\begin{proof} We begin with showing the second part of the proposition.

The mapping class group is generated by  
finitely many Dehn twists about simple closed curves. 
A specific generating system are the 
so-called \emph{Humphries generators} (see p.112 of \cite{FM12}). 
These generators
are Dehn twists about 
the set of simple closed curves
$a_1,\dots,a_g,c_1,\dots,c_{g-1},m_1,m_2$ shown in the Figure 1.
\begin{figure}[ht]
\begin{center}
\includegraphics[width=0.9\linewidth]{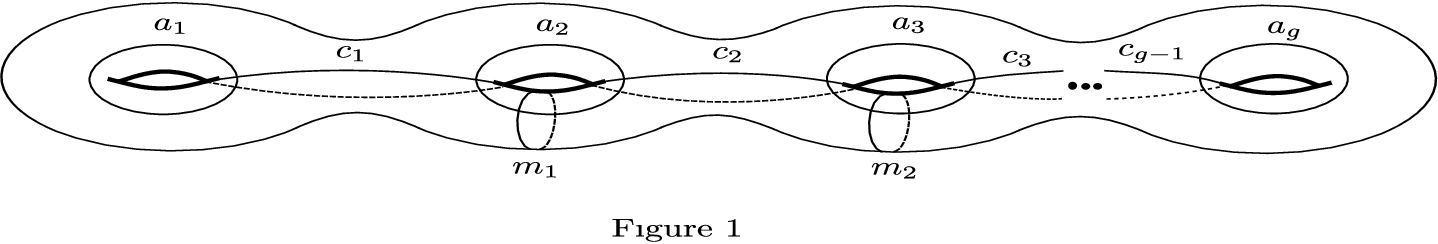}
\end{center}
\end{figure}
 
Let ${\cal Q}$ be a stratum of abelian
differentials with at least
one simple zero. By \cite{KZ03}, ${\cal Q}$ is connected.
For a simple closed curve $c$ 
denote by $T_c$ the positive Dehn twist about $c$. 

For the second part of the proposition it suffices to show 
that for each of the simple closed curves $c$ shown in 
Figure 1, there is $\sigma(c)\in \{1,-1\}$ so that 
the subgroup of ${\rm Mod}(S)$  generated by 
$\Omega(\gamma)$ $(\gamma\in \Gamma_0)$ contains
$T_c^{\sigma(c)}$.

By \cite{H16}, to each component ${\cal Q}$ of a stratum
there is associated a collection ${\cal T}({\cal Q})$ of 
\emph{large train tracks} which parametrize the component in 
a sense described in \cite{H16}. 
In particular, if $\tau\in {\cal T}({\cal Q})$ and if $\phi\in {\rm Mod}(S)$ 
is a pseudo-Anosov mapping class with \emph{train track expansion}
$\tau$ (this means that the train track $\phi(\tau)$ is \emph{carried} by
$\tau$, a property which we denote by $\phi(\tau)\prec \tau$ 
in the sequel), then the periodic orbit of the Teichm\"uller flow corresponding
to the conjugacy class of $\phi$ is contained in the closure of ${\cal Q}$. 
This statement can be viewed as 
a version of Rauzy Veech induction.

If ${\cal U}$ is a component
of a stratum and if ${\cal U}$ is contained in the closure of 
${\cal Q}$, then any train track 
associated to ${\cal U}$ can be obtained from some
train track associated to ${\cal Q}$ by removing some branches \cite{H16}. 
Furthermore, if we call a train track $\tau$ \emph{orientable} if
there exists a consistent orientation of the branches of 
$\tau$ (here consistent means that the orientation 
is compatible at the switches), then orientable train tracks
correspond to strata of abelian differentials. 

Let now ${\cal U}$ be the stratum of abelian 
differentials with one simple zero
and one zero of order $2g-3$.
We claim that 
there is a train track $\tau$ for ${\cal U}$ with the following
property. For each 
of the Humphries generators $T_c$ of ${\rm Mod}(S)$ 
there exists 
$\sigma(c)\in \{1,-1\}$ such that
we have $T_c^{\sigma(c)}(\tau)\prec \tau$, ie   
the train track
$T_c^{\sigma(c)}(\tau)$ is \emph{carried} by $\tau$.

Let as before $\iota$ be the intersection form on $H_1(S,\mathbb{Z})$. 
Denote by $[c]$ the homology class of an 
oriented simple closed curve $c$.
Orient the curves in Figure 1 in such a way that for each
$i$ we have $\iota([c_{i}],[a_{i}])=1= 
\iota([c_{i}],[a_{i+1}])$ and 
$\iota([a_2],[m_1])=-1,\iota([a_3],[m_2])=1$. 
For example, we can find such an orientation so that 
the curves $a_{2i-1}$ in Figure 1 are oriented 
counter-clockwise, and the curves 
$a_{2i}$ clockwise.

Construct from this oriented curve system  
a train track $\eta$ by replacing each intersection of 
oriented curves by a large branch as shown in Figure 2.
\begin{figure}[ht]
\begin{center}
\includegraphics[width=0.7\linewidth]{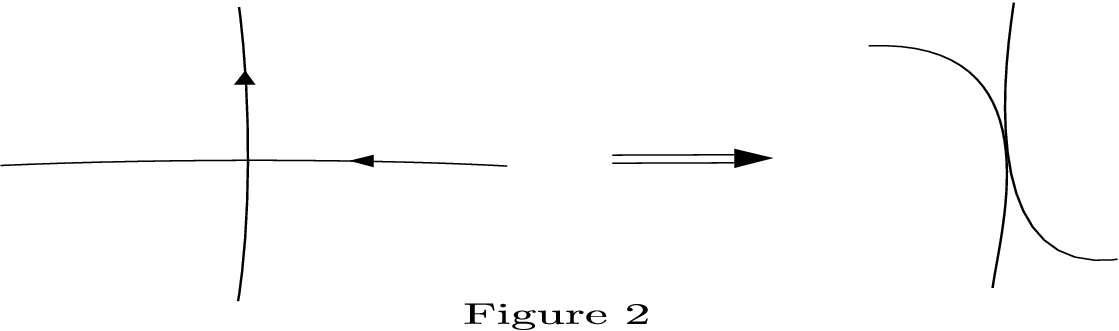}
\end{center}
\end{figure}
Informally, this amounts to following the oriented curves 
$a_i,c_i$ in the direction prescribed by their orientation. 
It is easy to check that $\eta$ is orientable. 

Each of the curves $a_i,c_i,m_i$ is embedded in $\eta$, and 
the image of $\eta$ under  
either the positive or the negative 
Dehn twist about any of these curves is 
carried by $\eta$. 
Namely, the curves are embedded
as a subgraph which either consists of a single large branch and
a single small branch (in the case of the curves
$a_1,a_g,m_1,m_2$) or of two large branches and two 
small branches as shown in Figure 3 below, or of three
large branches and three small branches (for the curves $a_2,a_3$). 
For a simple closed curve $c$ as shown in 
Figure 3, the train track obtained from $\tau$ by a single
positive Dehn twist about $c$ is obtained from two 
\emph{splits} at the two large branches shown in the figure. 
\begin{figure}[ht]
\begin{center}
\includegraphics[width=0.7\linewidth]{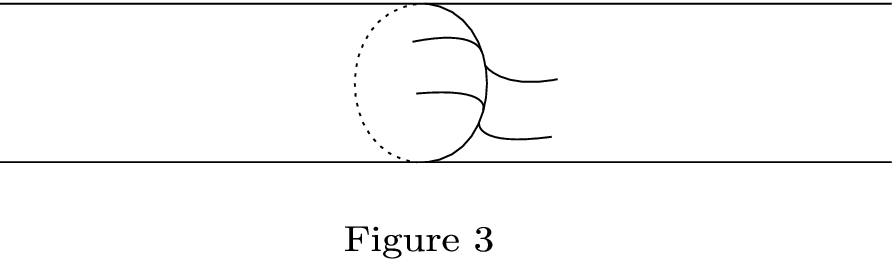}
\end{center}
\end{figure}

The train track $\eta$ has two complementary components, one
of them is a four-gon. It is not hard to see that $\eta$ is 
\emph{large} in the sense of \cite{H16} (ie it is birecurrent, and it carries
a large geodesic lamination of the same combinatorial type 
as $\eta$). Thus $\eta$ is associated to the 
stratum ${\cal U}$ of differentials
with one simple zero and one zero of order $2g-3$ \cite{H16}. 
By construction, it has the properties required in the above claim.

A stratum ${\cal Q}$ of abelian differentials with a simple zero
contains ${\cal U}$ in its closure \cite{KZ03}.
There is a train track $\tau$ for ${\cal Q}$ so that
$\eta$ can be obtained from $\tau$ by removing some 
branches \cite{H16}, i.e. $\eta$ is a subtrack of $\tau$.
We can choose $\tau$ in such a way that for each of the 
Humphries generators $T_c$, we have $T_c^{\sigma(c)}\tau\prec \tau$.
Thus $\tau$ is a train track as required in the above claim
(see again \cite{H16} for details of this construction).

Choose a pseudo-Anosov mapping class $\phi$ 
which admits $\tau$
as a \emph{train track expansion}.
This means that $\phi\tau\prec\tau$.
Assume that $\phi$ maps every branch of $\tau$ \emph{onto} $\tau$.
This will guarantee that the periodic orbit $\gamma$ defined 
by $\phi$ is contained
in ${\cal Q}$ rather than in the boundary of ${\cal Q}$, see \cite{H16}. 
Then for each of the Humphries generators $T_c^{\sigma(c)}
\in {\rm Mod}(S)$,
the composition
$\phi\circ T_c^{\sigma(c)}$ satisfies 
$(\phi\circ T_c^{\sigma(c)})\tau\prec\tau$, moreover 
$\phi\circ T_c^{\sigma(c)}$ maps every 
branch of $\tau$ \emph{onto} $\tau$. 
But this just means
that $\phi\circ T_c^{\sigma(c)}$ is a pseudo-Anosov 
mapping class which admits $\tau$
as a train track expansion. In particular, 
$\phi\circ T_c^{\sigma(c)}$ determines a
periodic orbit in ${\cal Q}$. We may assume without
loss of generality that this orbit is contained 
in ${\cal Q}_{\rm good}$.

We next show  
that for every periodic orbit 
$\gamma$ in ${\cal Q}_{\rm good}$ defined by the conjugacy
class of a pseudo-Anosov mapping class $\phi$ 
with train track expansion $\tau$
as above and for 
every neighborhood $U_0$ of $\gamma(0)$ (see the
proof of Proposition \ref{lowerestimate}), the subgroup of 
${\rm Mod}(S)$ generated by 
the parametrized orbits in the set $\Gamma_0$ contains 
each of the Humphries generators $T_c^{\sigma(c)}$
and hence this group
is the entire mapping class group. 

Namely, write $\beta=T_c^{\sigma(c)}$ for one
of these Dehn twists. By the above discussion,
for each $k$ the mapping class
$\phi^k\circ \beta\circ\phi^k$ is pseudo-Anosov, 
with train track expansion $\tau$,  
and it defines a periodic orbit
$\gamma_{\phi^k\circ \beta\circ \phi^k}$ in ${\cal Q}$. 
As $k\to \infty$, the normalized $\Phi^t$-invariant
measures supported on the periodic orbits 
$\gamma_{\phi^k\circ \beta\circ \phi^k}$
converge weakly to the normalized Lebesgue measure on $\gamma$. 
Namely, the vertical projective measured laminations 
$\mu_k,\mu$ of  
abelian differentials $q_k,q\in {\cal Q}$ which
generated the periodic orbit
$\gamma_{\phi^k\circ \beta\circ \phi^k},\gamma$
are all carried by 
$\tau$, and $\mu_k\to \mu$ in  
the space of projective transverse measures on $\tau$.
By the same reasoning, the 
horizontal projective measured geodesic laminations of 
$q_k$ converge to the projective measured geodesic lamination of $q$.

Thus for sufficiently large $k$ the periodic orbit 
$\gamma_{\phi^k\circ \beta\circ\phi^k}$ 
passes through the set $Z_0\subset U_0$ 
used for the construction of the set $\Gamma_0$ 
and hence it defines an element of $\Gamma_0$. 
Then 
$\beta=\phi^{-k}\circ (\phi^k\circ\beta\circ\phi^k)\circ\phi^{-k}$ 
is contained in the
group generated by $\{\Omega(\gamma)\mid \gamma\in \Gamma_0\}$
as claimed. As a consequence,
the second part of the proposition holds true for any choice of a 
neighborhood $U_0$ of $\gamma(0)$.

To show the second part of the proposition for an 
arbitrary open neighborhood $U$ in ${\cal Q}$ of a birecurrent
point $q\in {\cal Q}_{\rm good}$, recall that 
by ergodicity of the Teichm\"uller flow and the Anosov closing
lemma established in \cite{H13}, ``generic'' periodic
orbits for $\Phi^t$ passing through $U$ become 
equidistributed for the Lebesgue measure
(see \cite{H13} for a detailed discussion). 
In particular, they pass through the set $Z_0\subset U_0$ chosen 
as above.  

Now use the argument for the set $Z_0\subset U_0$ as follows.
Let $\Gamma_1$ be the set of periodic orbits constructed from $q$
and the open neighborhood $U$ of $q$. Let $\gamma\in \Gamma_1$ 
be a generic periodic orbit which passes
through the set $Z_0$.
Let $T>0$ be such that $\gamma(T)\in Z_0$.
Let $\hat \gamma$ be the reparametrization of $\gamma$
which satisfies $\hat\gamma(0)=\gamma(T)$. 
Apply the 
above construction to $\hat \gamma$ and the pseudo-Anosov mapping class 
$\hat \phi$ defined by the parametrized orbit $\gamma$. 
We conclude that 
for a sufficiently large $k$, a reparametrization of 
the periodic orbit corresponding to $\phi^k\circ \beta\circ \phi^k$
is contained in $\Gamma_1$. 
By the reasoning used for the set $U_0$, we obtain 
the second part of the proposition for 
periodic orbits passing through $U$.

The first part of this proof also immediately implies the first 
part of the proposition for strata of abelian differentials
with at least one simple zero. We are left with showing 
the first part of the proposition for arbitrary strata ${\cal Q}$
of abelian differentials.

Let $c$ be an oriented non-separating simple closed curve on $S$
which defines the homology class $[c]$. 
The action on 
homology of the positive Dehn twist $T_c$ about $c$ equals 
\begin{equation}\label{dehntwist}
T_c(\alpha)=
\alpha+\iota(\alpha,[c])[c]
\end{equation} 
 (Proposition 6.3 of \cite{FM12}).
In other words, the Dehn twist $T_c$ 
acts on $H_1(S,\mathbb{Z})$ as a 
transvection by $[c]$.

By the main result of \cite{KZ03}, for $g\geq 4$  
the stratum ${\cal H}(2g-2)$ 
of abelian differentials with a single zero
consists of three connected components.
One of these components is hyperelliptic, the other two components
are distinguished by the parity of the spin structure they define.
The stratum ${\cal H}(4)$ consists of two components;
one component is hyperelliptic, 
the second component has odd spin structure. 
The stratum ${\cal H}(2)$ is connected.

Any component ${\cal Q}$ of a stratum with more than one zero
contains a component of ${\cal H}(2g-2)$ in its closure. 
Thus following the reasoning in the first part of this proof,
it suffices to find for each of the  components 
${\cal V}$ of ${\cal H}(2g-2)$
a train track $\zeta$ 
associated to ${\cal V}$ with the following 
property. Let $p\geq 3$ be an odd prime. Then the subgroup of 
$Sp(2g,F_p)$ which is generated by those transvections in $Sp(2g,F_p)$ which 
are images under $\Lambda_p\circ \Psi$ of 
Dehn twists about embedded curves
$c$ in $\zeta$ with $T_c^{\sigma(c)}\zeta\prec\zeta$ 
$(\sigma\in \{-1,1\})$ is all of 
$Sp(2g,F_p)$.

Let again $\eta$ be the train track
constructed in the beginning of this proof from the Humphries 
generators of ${\rm Mod}(S)$. 
Let $\zeta$ be the train track obtained from $\eta$ by removing the small
branch contained in the curve $m_2$. This train track is orientable,
filling and birecurrent. 
If $g\geq 3$ then $\zeta$ is not invariant
under a hyperelliptic involution. Hence it corresponds to one of the two 
non-hyperelliptic components of ${\cal H}(2g-2)$. 
These components are distinguished by the parity of the spin structure they
define. The formula in the proof of Proposition 4.9 of \cite{H16} calculates
this parity (see also the formulas in \cite{KZ03}). 
It is odd if $g$ is odd, and even if $g$ is even.


For each curve $c\in \{c_i,a_i,m_1\}$ 
we have $T_c^{\sigma(c)}\zeta\prec\zeta$.
Now for any choice of orientations of the curves in 
$\{c_i,a_i,m_1\}$, the homology classes $\{[c_i],[a_i],[m_1]\}$
are a basis of $H_1(S,\mathbb{Z})$. 
Moreover, any two curves intersect in at most one point and
their union is a connected subset of 
$S$. This implies that the transvections by the homology classes of
these curves satisfy the assumptions in 
Lemma \ref{transvection}. As a consequence,  
for each odd prime $p\geq 3$ these transvections
generate $Sp(2g,F_p)$. 
This is what we wanted to show.

The other two components of ${\cal H}(2g-2)$ are treated in the
same way. 
A train track for the hyperelliptic component of ${\cal H}(2g-2)$ 
can be constructed from the following curve system.
Remove the curves $m_1,m_2$ from the Humphries generators and 
add a simple closed curve $c$ which intersects $a_1$ in a single point
and does not intersect any other of the curves shown in Figure 1.
The orientation of the curve $c$ can be chosen in such a way that all the
properties used above hold true. 
The resulting set of curves is clearly invariant under the hyperelliptic
involution. The same argument as for the non-hyperelliptic components 
discussed above yields the case of the hyperelliptic
component. In particular, we established the proposition for 
$g=3$ \cite{KZ03}.

We are left with finding for $g\geq 4$ a train track 
for the second non-hyperelliptic component of ${\cal H}(2g-2)$ with parity of 
the spin structure opposite to the parity of $g$. 
Thus let ${\cal Q}$ be the
component of abelian differentials in genus $g\geq 4$ with a single
zero with parity of the spin structure $g+1$ mod 2. By 
Lemma 14 of \cite{KZ03}, there are differentials $q\in {\cal Q}$ 
which can be obtained from an abelian differential
with a single zero in genus $g-1$ by ``bubbling a handle''.
This can be translated into train tracks as follows. 
Start with a train track $\eta$ on a surface $S^\prime$
of genus $g-1$ 
constructed above for the non-hyperelliptic 
component with a single zero in genus $g-1$
whose spin structure equals $g-1$ mod $2$. 
Attach to $S^\prime$ a 
cylinder $C$ by removing from $S^\prime-\eta$ two
small disks. There is no ambiguity here since
$S^\prime-\eta$ is connected. 
Following the strategy in \cite{H16}
we extend the train track $\eta$ by adding first 
an embedded simple closed curve
$c$ defining the core curve of $C$. Attach two small branches
$b_1,b_2$ to $c$, one at each side of $c$, 
such that after adding these
branches, $c$ consists of a single large branch and a single
small branch. Connect the branch $b_1$ to the curve $a_1$ 
and connect the branch
$b_2$ to the curve $a_2$ (notations are as in Figure 1)
using the above
orientation rule. It is now easy to check that the resulting
train track 
$\tau$ is large and defines the stratum of differentials
with parity of spin structure $g-1$ mod 2. 
Furthermore, $\tau$ 
carries a system of embedded simple closed curves
$a_i,b_i$ 
defining a basis for $H_1(S,\mathbb{Z})$ with the 
properties stated in Lemma \ref{transvection}. 
For each $i$ there is a train track 
$\alpha_i,\beta_i$ carried by $\eta$ and a choice of a sign
$\sigma(a_i),\sigma(b_i)$ such that
$T_{a_i}^{\sigma(a_i)}\alpha_i\prec\alpha_i,
T_{b_i}^{\sigma(b_i)}\beta_i\prec\beta_i$. 
The reasoning in the beginning of this proof 
can now be applied to suitably chosen 
pseudo-Anosov mapping classes
$\phi_i,\zeta_i$ with train track expansion $\tau$ 
so that $\phi_i\tau\prec \alpha_i,\zeta_i\tau\prec \beta_i$.
This completes the proof of the proposition.
\end{proof}

The first part of the following corollary 
establishes Zorich's conjecture \cite{Z99}
(we leave the easy translation into the language of 
Rauzy induction to the reader).  
For its formulation, 
call a component ${\cal Q}$ of a stratum  
\emph{locally Zarisky dense} 
if the following holds true.
Let $U$ be any open subset of ${\cal Q}_{\rm good}$ and let 
$\Omega(\Gamma_0)$ be the sub-semigroup of 
${\rm Mod}(S)$ generated 
in the sense discussed above by the
periodic orbits $\Gamma_0$ for $\Phi^t$ passing through $U$.  
We require that the sub-semigroup $\Psi\Omega(\Gamma_0)$ 
of $Sp(2g,\mathbb{Z})$ 
is Zariski dense in $Sp(2g,\mathbb{R})$.

\begin{corollary}\label{connected}
\begin{enumerate}
\item A component ${\cal Q}$ of a stratum is locally Zariski dense.
\item If ${\cal Q}$ is a stratum of abelian differentials with 
a simple zero then the preimage  
of ${\cal Q}$ in the Teichm\"uller space of area one
abelian differentials is connected.
\end{enumerate}
\end{corollary}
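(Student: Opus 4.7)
The plan is to derive both parts of the corollary directly from Proposition \ref{surjectivity}, once the statement of that proposition is granted to hold not only for the specific $\Gamma_0$ built around the birecurrent point $q$, but also for the $\Gamma_0$ built from a nested family of neighborhoods inside an arbitrary open set $U\subset{\cal Q}$. This extension is effected exactly by the equidistribution-plus-reparametrization argument carried out in the closing paragraphs of the proof of Proposition \ref{surjectivity}.

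Part (2) is then immediate. By construction, each $\Omega(\gamma)$ with $\gamma\in\Gamma_0$ preserves the fixed component $\tilde{\cal Q}$: lifting a characteristic curve from the fixed lift $\tilde V\subset\tilde{\cal Q}$ gives a flow arc that stays in $\tilde{\cal Q}$ by continuity, and $\Omega(\gamma)$ identifies its endpoints. Hence the subgroup of ${\rm Mod}(S)$ generated by $\Omega(\Gamma_0)$ lies in the stabilizer of $\tilde{\cal Q}$. When ${\cal Q}$ is a stratum of differentials with a simple zero, Proposition \ref{surjectivity}(2) asserts that this subgroup is all of ${\rm Mod}(S)$, so the stabilizer of $\tilde{\cal Q}$ is the entire mapping class group. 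The preimage of ${\cal Q}$ is the union of the ${\rm Mod}(S)$-translates of $\tilde{\cal Q}$, so this preimage coincides with $\tilde{\cal Q}$ itself and is therefore connected.

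For part (1), fix an open $U\subset{\cal Q}$, form $\Gamma_0$ inside $U$, and set $H=\langle\Psi\Omega(\Gamma_0)\rangle\subset Sp(2g,\mathbb{Z})$. Let $G$ be the Zariski closure of $H$ in $Sp(2g,\mathbb{R})$; it is an algebraic subgroup defined over $\mathbb{Q}$. By the extended form of Proposition \ref{surjectivity}(1), for every odd prime $p$ the reduction $\Lambda_p(H)$ equals all of $Sp(2g,F_p)$. Taking an integral model ${\cal G}$ of $G$, for all but finitely many $p$ the reduction ${\cal G}_{F_p}$ is a smooth $F_p$-subgroup of $Sp(2g)_{F_p}$ whose $F_p$-points contain $\Lambda_p(H)=Sp(2g,F_p)$. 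The Lang--Weil estimates give $|{\cal G}(F_p)|=O(p^{\dim G})$ while $|Sp(2g,F_p)|\sim p^{g(2g+1)}$, which forces $\dim G=g(2g+1)=\dim Sp(2g)$; as $Sp(2g)$ is connected, this means $G=Sp(2g,\mathbb{R})$ and $H$ is Zariski dense.

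The main obstacle is the clean extension of Proposition \ref{surjectivity}(1) to an arbitrary open set $U$: the analogous extension for part (2) is carried out in the proof of that proposition via an equidistribution argument followed by conjugation by a well-chosen iterate of the reference pseudo-Anosov element, and one must check that the same device transfers the mod-$p$ surjectivity of part (1), keeping track of how conjugation by elements outside the current generating set interacts with $\Lambda_p$. Once this is in hand, the rest is a straightforward stabilizer argument for part (2) and a routine Lang--Weil dimension count for part (1).
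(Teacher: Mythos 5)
Your proof is correct and takes essentially the same route as the paper: both parts are deduced from Proposition \ref{surjectivity} (together with the extension to arbitrary open $U$ that is established inside that proposition's proof), with part (2) via the stabilizer argument and part (1) via mod-$p$ surjectivity plus a Zariski-density criterion. The only difference is that you unpack the density criterion through a Lang--Weil dimension count, whereas the paper simply invokes it as a known fact with a reference to \cite{Lu99}.
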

\begin{proof} The first part of the corollary
follows from the first part of 
Proposition \ref{surjectivity} and the well known fact that
a subgroup 
of $Sp(2g,\mathbb{Z})$ which surjects onto $Sp(2g,F_p)$ for all  
odd primes $p\geq 3$ is Zariski dense in $Sp(2g,\mathbb{R})$
(see \cite{Lu99} for more and for references).

Now let ${\cal Q}$ be a stratum of abelian or
quadratic differentials with a simple zero and 
let $\tilde {\cal Q}$ be a component of 
the preimage of ${\cal Q}$ in the Teichm\"uller space of 
abelian differentials. 
By the second part of Proposition \ref{surjectivity},
the stabilizer of $\tilde {\cal Q}$ in the mapping class group
equals the entire mapping class group. As the components of
the preimage of ${\cal Q}$ 
are permuted by the mapping class group, the
second part of the corollary follows.
\end{proof}

\begin{remark}\label{topology}
More generally, one can ask about the orbifold fundamental
group of a component ${\cal Q}$ of a stratum
of abelian or quadratic differentials.
If ${\cal Q}$ is a stratum with $k\geq 2$ zeros then 
the zero forgetful map maps this orbifold fundamental group into
the mapping class group ${\rm Mod}(S)$ of $S$. 
Proposition \ref{surjectivity} shows that for strata with a simple zero,
this map is onto. For some strata in genus 3, the
fundamental group has been identified with tools from algebraic 
geometry in \cite{LM14}. 

For most components of strata, we do not even know the
image of the orbifold fundamental group in 
$Sp(2g,\mathbb{Z})$ besides surjecting onto $Sp(2g,F_p)$ for 
all odd primes $p$, see 
Proposition \ref{surjectivity}.  
For hyperelliptic components, this image 
has been determined in \cite{AMY16}.
One finds that the group is precisely
the subgroup stabilized by 
the hyperelliptic involution, in particular it is of finite index in 
$Sp(2g,\mathbb{Z})$. We conjecture that the
latter property holds true for all components of all strata. 
\end{remark}

\section{Galois groups}\label{galois}

In this section we consider again 
an arbitrary component ${\cal Q}$ of a stratum of abelian 
differentials. We continue to use the assumptions
and notations from section \ref{lyapunov} and Section \ref{zariski}.
Recall in particular the construction of the
set $\Gamma_0$ of parametrized periodic orbits in ${\cal Q}_{\rm good}$ 
defined by a small neighborhood
of a point $q\in {\cal Q}_{\rm good}$ which is birecurrent under the 
Teichm\"uller flow. We showed in Section \ref{zariski} that this
set determines a sub-semigroup $\Omega(\Gamma_0)$ 
of ${\rm Mod}(S)$ consisting of 
pseudo-Anosov elements whose image under 
the homomorphism $\Psi$ is 
Zariski dense  $Sp(2g,\mathbb{R})$. 
More precisely, by the first part of Proposition \ref{surjectivity},
for every odd prime $p$ the image   
$G_p=\Lambda_p(\Psi\Omega(\Gamma_0))$ 
of the semi-group $\Psi(\Omega(\Gamma_0))$ 
generates the entire group 
$Sp(2g,F_p)$. Here as before, $\Lambda_p:Sp(2g,\mathbb{Z})\to 
Sp(2g,F_p)$ denotes reduction modulo $p$. 

Now $Sp(2g,F_p)$ is a finite group and therefore for 
every $A\in Sp(2g,F_p)$ there is some
$\ell \geq 1$ such that $A^\ell=A^{-1}$. As a consequence,
for all $x,y\in G_p$ we have $xy^{-1}\in G_p$ as well and 
hence $G_p<Sp(2g,F_p)$ is a group. Then $G_p$ equals the group
generated by $\Lambda_p(\Psi\Omega(\Gamma_0))$ and thus
$G_p=Sp(2g,F_p)$.

Our next goal is to makes this statement quantitative. 
To this end denote for 
a periodic orbit $\gamma$ for $\Phi^t$ by $\delta_\gamma$ the 
$\Phi^t$-invariant measure supported on $\gamma$ whose total mass 
equals the period $\ell(\gamma)$ of $\gamma$. 
Recall that periodic orbits in the set $\Gamma_0$ 
are parametrized, so a single unparametrized
periodic orbit may give rise to many different elements of $\Gamma_0$.

Let as before $p$ be an odd prime and 
let $N(p)$ be the number of elements
of $Sp(2g,F_p)$. The following proposition holds true for any finite
group $G$  of order $N$ with the property that there is a homomorphism
$\rho:{\rm Mod}(S)\to G$ whose restriction to the semigroup 
$\Omega(\Gamma_0)$ is surjective.

\begin{proposition}\label{equiproject}
Let $B\in Sp(2g,F_p)$ be arbitrary, let $p$ be an odd prime
and define
\[{\cal B}(R,B)=\{\gamma\in \Gamma_0\mid 
 \ell(\gamma)\leq R,\Lambda_p\circ \Psi\circ\Omega(\gamma)=B\}.\]
Then as $R\to\infty$, 
\[\sharp {\cal B}(R,B)\sim \frac{e^{hR}\lambda(Z_0)}{2ht_0N(p)}\]
independent of $B$ up to a multiplicative error which 
only depends on $Z_0$ and which can be arranged to be 
arbitrarily close to
one. 
%
\end{proposition}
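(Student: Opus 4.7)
The strategy is to combine the semigroup structure furnished by Proposition \ref{grouplaw}, the surjectivity onto $Sp(2g,F_p)$ from Proposition \ref{surjectivity}(1), and the local counting estimate from Proposition \ref{count} into an equidistribution statement for the finite-group-valued cocycle $\Lambda_p\circ\Psi\circ\Omega\colon\Gamma_0\to Sp(2g,F_p)$. Concretely, I would first control the total count $\sum_B\sharp{\cal B}(R,B)$ and then argue that the partial counts $\sharp{\cal B}(R,B)$ are asymptotically the same for all $B$, whence each equals $1/N(p)$ times the total.

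For the total count, use the bijection recalled after the construction of $\Gamma_0$ between parametrized orbits in $\Gamma_0$ of length close to $T$ and connected components of $\Phi^T V\cap V$ meeting $\Phi^T Z_0\cap Z_1$. Summing the local estimate of Proposition \ref{count} over windows of length $2t_0$ and using mixing of the Lebesgue measure to control the fraction of components that survive the restriction to $\Phi^T Z_0\cap Z_1$ yields
\[
\sharp\{\gamma\in\Gamma_0:\ell(\gamma)\le R\}\sim\frac{e^{hR}\lambda(Z_0)}{2h t_0}
\]
up to a multiplicative error close to one depending only on $Z_0$.

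For the equidistribution, fix $C\in Sp(2g,F_p)$. Since $Sp(2g,F_p)$ is a finite group, Proposition \ref{surjectivity}(1) produces a tuple $(\gamma_1^C,\dots,\gamma_{k_C}^C)\in\Gamma_0^{k_C}$ with $\Lambda_p\Psi\Omega(\gamma_{k_C}^C)\cdots\Lambda_p\Psi\Omega(\gamma_1^C)=C$. Proposition \ref{grouplaw} then yields a periodic orbit $\beta_C$ of bounded length $L_C$ with $\Omega(\beta_C)=\Omega(\gamma_{k_C}^C)\circ\cdots\circ\Omega(\gamma_1^C)$, and in particular $\Lambda_p\Psi\Omega(\beta_C)=C$. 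Concatenating $\beta_C$ with an arbitrary $\gamma\in\Gamma_0$ of monodromy $B$, again via Proposition \ref{grouplaw}, produces an orbit $\beta_C*\gamma\in\Gamma_0$ of length $\ell(\gamma)+L_C+O(\delta)$ and monodromy $CB$. Since distinct $\gamma$ give rise to distinct long orbits (the terminal portion of the characteristic curve of $\beta_C*\gamma$ recovers $\gamma$), this is an essentially injective map, so for any $B_1,B_2\in Sp(2g,F_p)$ and $C=B_2B_1^{-1}$,
\[
\sharp{\cal B}(R,B_1)\le (1+\delta)\,\sharp{\cal B}(R+L_C+\delta,B_2).
\]
Applying the symmetric inequality with $C^{-1}$ in place of $C$ and using the exponential growth of the total count then forces all $\sharp{\cal B}(R,B)$ to be asymptotic to the same value, which must equal $\frac{1}{N(p)}$ times the total count, giving the claimed asymptotic.

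The main obstacle will be the concatenation step: one must verify that $\beta_C*\gamma$ is genuinely an element of $\Gamma_0$, with basepoint in $Z_0$ and characteristic curve in $V$, rather than merely a nearby periodic orbit, and one must bound the overcounting in $\gamma\mapsto\beta_C*\gamma$ uniformly in $R$. Both points should be handled by shrinking the nested neighborhoods $Z_0\subset Z_1\subset Z_2\subset Z_3\subset V$ so that all finitely many ``shift'' orbits $\beta_C$ ($C\in Sp(2g,F_p)$) can be accommodated inside the construction of Proposition \ref{grouplaw}, and by reparametrizing the concatenation so that its basepoint falls inside $Z_0$. The additive length shift $L_C$ is uniformly bounded, hence is absorbed into the multiplicative $(1+\delta)$ error by the exponential nature of the growth.
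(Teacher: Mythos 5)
Your overall architecture parallels the paper's — reduce to showing the count $\sharp{\cal B}(R,B)$ is asymptotically independent of $B$, then divide by $N(p)$ — but the crucial step, passing from a two-sided concatenation inequality to asymptotic equality, has a genuine gap.

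The inequality you derive, $\sharp{\cal B}(R,B_1)\le(1+\delta)\,\sharp{\cal B}(R+L_C+\delta,B_2)$, involves an additive shift $L_C$ in the length parameter. Because the total count grows like $e^{hR}$, an additive shift of $L_C$ translates to a \emph{multiplicative} factor $e^{hL_C}$ — a fixed constant, but not one close to $1$, and not one that shrinks as $R\to\infty$. So combining with the reverse inequality only yields $\sharp{\cal B}(R,B_1)\asymp\sharp{\cal B}(R,B_2)$ up to a bounded but uncontrolled factor, which is precisely what you already know from the lower-bound construction. It does not give the asymptotic $\sharp{\cal B}(R,B_1)\sim\sharp{\cal B}(R,B_2)$, and cannot, because the map $\gamma\mapsto\beta_C*\gamma$ is far from surjective onto ${\cal B}(R+L_C+\delta,B_2)$: it only reaches orbits whose initial segment factors through $\beta_C$. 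Your assertion that the shift is ``absorbed into the multiplicative $(1+\delta)$ error by the exponential nature of the growth'' is the reverse of what happens — the exponential growth amplifies the shift into a fixed constant.

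The paper circumvents this by replacing the one-sided injection with an \emph{exact} convolution identity: every orbit in the restricted set $\Gamma(R,T,B,Z)$ (orbits of length near $R+T$ passing through $Z$ near time $R$) decomposes uniquely as $\gamma_2\hat\circ\gamma_1$ with $\gamma_1\in\Gamma(R,A,Z)$, $\gamma_2\in\Gamma(T,BA^{-1},Z)$, for varying $A$. Combined with mixing of all orders (which controls the volume $\lambda(Z\cap\Phi^RZ\cap\Phi^{R+S}Z)$) and the equidistribution of periodic orbits, this yields the constraint $c(B)=\sum_A c(BA^{-1})c(A)$ on the normalized limit densities $c(B)$, with $\sum_A c(A)=1$. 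Taking $B$ at the minimum of $c$ and using convexity forces $c\equiv 1/N(p)$. This is the step your proposal is missing: you need an exact decomposition and a convexity or extremality argument, not a bounded-shift injection, to rule out the scenario where the $c(B)$ are unequal constants.
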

\begin{proof}
We show first that there is a number $a>0$ such that
\[\sharp{\cal B}(R,B)\geq ae^{hR}\] 
for all $B\in Sp(2g,F_p)$ and 
for all sufficiently
large $R$.  

To this end let $U$ be any neighborhood of a birecurrent 
point $q\in {\cal Q}_{\rm good}$.  
Let $R_0>0,\delta>0$ and let  
$Z_0\subset Z_1\subset Z_2\subset V\subset U$ 
be as in Lemma \ref{technical}.  
Let ${\cal G}$ be the sub-semigroup
of ${\rm Mod}(S)$ generated by 
$\{\Omega(\gamma)\mid \gamma\in \Gamma_0\}$.
By Lemma \ref{grouplaw}, this semigroup consists of 
pseudo-Anosov elements. Furthermore,  
each $\rho\in {\cal G}$ is represented by  
a parametrized periodic orbit $\zeta$ for $\Phi^t$ which intersects
the set $Z_2$ 
in a segment of length $2t_0$ containing $\zeta(0)$ as
its midpoint. Vice versa, 
every periodic orbit which passes through $Z_0$
admits a parametrization so that the corresponding
element of ${\rm Mod}(S)$ is contained in ${\cal G}$.
The 
sub-semigroup $\Psi({\cal G})$ 
of $Sp(2g,\mathbb{Z})$
is mapped by
$\Lambda_p$ onto the finite group 
$Sp(2g,F_p)$.

Since $Sp(2g,F_p)$ is a \emph{finite} group and the above 
argument applies to every neighborhood $U$ of 
$q$ in ${\cal Q}_{\rm good}$, in particular to $U=Z_0$, 
there is a number 
$\hat R>R_0$ with the following property. Let 
$A\in Sp(2g,F_p)$ be arbitrary. Then there is some 
$z\in Z_0$ with $\Phi^tz\in Z_0$ for some $t\in (R_0,\hat R)$ 
which determines a parametrized
periodic orbit $\gamma(A)\in \Gamma_0$ 
with $\Lambda_p\Psi(\Omega(\gamma(A)))=A$.

Write $Z=Z_0$.  
Let $v\in Z$ be such that
$\Phi^Tv\in Z,\Phi^{T+U}v\in Z$ for some $T,U>R_0$.
By Lemma \ref{technical} and Lemma \ref{grouplaw}, 
the pseudo-orbits $\{\Phi^tv\mid 0\leq t\leq T\}$
and $\{\Phi^tv\mid T\leq t\leq T+U\}$ determine 
two parametrized periodic orbits $\gamma_1,\gamma_2$ 
for $\Phi^t$ which define elements
$\Lambda_p\Psi(\Omega(\gamma_1)),
\Lambda_p\Psi(\Omega(\gamma_2))
\in Sp(2g,F_p)$. Let
$\gamma=\gamma_2\hat\circ \gamma_1$ be the periodic orbit
for $\{\Phi^tv\mid 0\leq t\leq T+U\}$. The notation 
$\gamma=\gamma_2\hat\circ\gamma_1$
indicates that the element
$\gamma$ of ${\rm Mod}(S)$ defined by $\gamma$ 
is the product of the elements of ${\rm Mod}(S)$ defined by 
$\gamma_1$ and $\gamma_2$.
We have 
\[\Lambda_p\Psi(\Omega(\gamma_2\hat\circ \gamma_1))=
\Lambda_p\Psi(\Omega(\gamma_2))\circ 
\Lambda_p\Psi(\Omega(\gamma_1)).\]

Fix an element $B\in Sp(2g,F_p)$. 
If $A\in Sp(2g,F_p)$ is arbitrary and if 
$\zeta\in \Gamma_0$ is such that  
$\Lambda_p\Psi(\Omega(\zeta))=A$ then
$\Lambda_p\Psi(\Omega(\gamma(BA^{-1})\hat\circ \zeta))=B$.
In particular, 
by Proposition \ref{count}, 
for sufficiently large $R>R_0$
the number of parametrized periodic orbits $\gamma\in \Gamma_0$ 
with $\ell(\gamma)\leq R+\hat R$ and 
$\Lambda_p\Psi(\Omega(\gamma))=B$ 
is not smaller than the number of orbits in $\Gamma_0$ 
of length at most $R$. 

For large enough $R$, the number of orbits in $\Gamma_0$
of length at most $R$ can be estimated as in \cite{H13}. 
Namely, 
by Proposition \ref{count} and the choice of the set $Z$, 
for the fixed number $\delta>0$ used in the construction of 
$Z$ and  
for large $R$, the volume of $Z$ with 
respect to the sum of the measures supported on 
periodic orbits of $\Phi^t$ of length
in the interval $[R-t_0,R+t_0]$ is contained in the interval
\[  [2t_0e^{hR}\lambda(Z)(1-\delta),2t_0e^{hR}\lambda(Z)(1+\delta)] .\]

A periodic 
orbit intersects the set  $Z$ in arcs of length $2t_0$
\cite{H13}. As each such intersection component defines an element
of $\Gamma_0$ (recall that $\Gamma_0$ consists of
parametrized orbits), we conclude that for large enough $R$ 
the number of elements of 
$\Gamma_0$ of length contained in $[R-t_0,R+t_0]$ 
roughly equals $e^{hR}\lambda(Z)$.
Summation yields that for sufficiently large $k$ 
the number of elements of 
$\Gamma_0$ of length at most $2kt_0$ roughly equals 
\[\sum_{i=1}^{k}e^{h(2i-1)t_0}\lambda(Z)\sim e^{2hkt_0}\lambda(Z)/2ht_0.\]


As a consequence, there is a number $a>0$ not depending on $B$ 
such that 
up to passing to a subsequence, the
measures 
\[he^{-hR}\sum_{\gamma\in {\cal B}(R,B)}\delta_\gamma\chi_Z[\gamma(-t_0),
\gamma(t_0)]\]
converge to a measure $\hat\lambda_B$ on $Z$ of total mass contained in 
$[a\lambda(Z)(1-\delta),
\lambda(Z)(1+\delta)]$.
Here as before, $\delta_\gamma$ is the $\Phi^t$-invariant 
measure supported on $\gamma$ 
of total mass $\ell(\gamma)$, and $\chi_Z$ 
is the characteristic function of $Z$.

It is immediate from the construction that $\hat\lambda_B$ is 
the restriction to $Z$ of a $\Phi^t$-invariant Borel measure 
$\lambda_B$ on ${\cal Q}$. 
By the main result of \cite{H13} (see also Proposition \ref{count}),
this measure is contained in the measure class of the
Lebesgue measure $\lambda$. Thus by
ergodicity of $\lambda$ under the Teichm\"uller flow,
we have $\lambda_B=c(B) \lambda$ for a number 
$c(B)\in [a,1]$ and such that $\sum_Bc(B)=1$. 

Our goal is to show that $c(B)$ is independent of $B$.
To this end recall that the Lebesgue measure $\lambda$ is 
mixing of all orders \cite{M82}. In particular, for 
large enough numbers $R,S>0$ we have
\[\lambda(Z\cap \Phi^RZ\cap \Phi^{R+S}(Z))\sim \lambda(Z)^3\]
and therefore $\lambda_B(Z\cap \Phi^RZ\cap \Phi^{R+S}Z)\sim c(B)\lambda(Z)^3$.

For $R>0,T>0$ and $B\in Sp(2g,F_p)$ 
let $\Gamma(R,T,B,Z)$ be the set of 
all parametrized periodic orbits $\gamma\in \Gamma_0$ for 
$\Phi^t$ with the following properties.
\begin{enumerate}
\item The length of $\gamma$ is contained in the interval 
$[R+T-t_0,R+T+t_0]$.
\item $\gamma$ is determined by some $v\in Z$ and a return time
to $Z$ which is close to $R+T$. Moreover,  
there is a number $U\in [R-t_0,R+t_0]$ such that
$\Phi^Uv\in Z$.
\item 
$\Lambda_p\Psi(\Omega(\gamma))=B$.
\end{enumerate}
Define similarly a set $\Gamma(R+T,B,Z)$ containing all orbits with 
properties
(1) and (3) above. 
It follows from the above discussion (compare \cite{H13}) that
\[\sharp \Gamma(R+T,B,Z)\sim c(B)\lambda(Z)e^{h(R+T)}/2ht_0\]
for large enough $R$ and similarly
\[\sharp \Gamma(R,T,B,Z)\sim c(B)\lambda(Z)^2e^{h(R+T)}/(2ht_0)^2.\]


Each orbit $\gamma\in \Gamma(R,T,B,Z)$ can be represented in the
form $\gamma=\gamma_2\hat\circ \gamma_1$ for some
$\gamma_1\in \Gamma(R,A,Z)$ and some
$\gamma_2\in \Gamma(T,BA^{-1},Z)$.
Moreover, if $\gamma_1\in 
\Gamma(R,A,Z)$ for some $A\in Sp(2g,F_p)$ then  
for any $\gamma_2\in \Gamma(T,BA^{-1},Z)$ we have
$\gamma_2\hat \circ \gamma_1\in \Gamma(R,T,B,Z)$.  

As a consequence,
for an arbitrarily chosen $\epsilon >0$ 
and for sufficiently large $R>0$, 
as $T\to \infty$ we observe that  
\begin{align}
\sharp \Gamma(R,T,B,Z)=\sum_{A\in Sp(2g,F_p)}
\sharp\Gamma(T,BA^{-1},Z) \sharp\Gamma(R,A,Z) \notag\\
\sim \frac{1}{(2ht_0)^2}
\lambda(Z)^2 e^{h(R+T)}\sum_{A\in Sp(2g,F_p)}c(BA^{-1})c(A).
\notag
\end{align}


Now let $B\in Sp(2g,F_p)$ be such that 
$c(B)=\min\{c(A)\mid A\}$.
Such an element exists since $Sp(2g,F_p)$ is finite.
Since 
$\sum_Ac(BA^{-1})=1$ we have 
$\sharp\Gamma(R,T,B,Z)\sim c(B) \lambda(Z)^2e^{h(R+T)}/(2ht_0)^2$ only if 
$c(A)=c(B)=\frac{1}{N(p)}$ for all $A$. 
The proposition follows.
\end{proof}

\begin{remark}\label{equidistribution}
Proposition \ref{equiproject} can be viewed as an 
equidistribution result for conjugacy classes of elements
in $Sp(2g,F_p)$ defined by 
periodic orbits of the Teichm\"uller flow which parallels the 
familiar equidistribution of random walks
on finite connected graphs. 
The main difficulty lies in the 
fact that periodic orbits represent conjugacy classes
in the mapping class group rather than actual elements, 
moreover we look at periodic orbits in strata rather than
at all periodic orbits. 
\end{remark}

Now we are ready to complete the proof of the second part of 
Theorem \ref{theolyapunov}.
To this end recall that the characteristic polynomial of 
a symplectic matrix $A\in Sp(2g,\mathbb{Z})$ 
is reciprocal of degree $2g$. The roots of such a polynomial 
come in pairs. The Galois group 
of the number 
field defined by the polynomial 
is a subgroup of the
semidirect product 
\[(\mathbb{Z}/2\mathbb{Z})^g\rtimes S_g\]
where $S_g$ is the symmetric group in $g$ elements
(see \cite{VV02} for a nice account on this classical fact),
and $S_g$ acts on $(\mathbb{Z}/2\mathbb{Z})^g$
by permutation of the factors. 

In the sequel we call 
the Galois group of the field defined by the characteristic polynomial of 
a matrix 
$A\in Sp(2g,\mathbb{Z})$ simply the Galois group of $A$.
It only depends on the conjugacy class of $A$. We say that the 
Galois group of $A$ is 
\emph{full} if it coincides with 
$(\mathbb{Z}/2\mathbb{Z})^g\rtimes S_g$.

Having a full Galois group makes also sense for an element in 
$Sp(2g,F_p)$. We use this as in \cite{R08} as follows.

Let $p\geq 5$ be a prime and let $N(p)$ be the number of elements of 
$Sp(2g,F_p)$. 
By Proposition \ref{equiproject}, for large enough $R$ and 
every $B\in Sp(2g,F_p)$, the number of orbits
$\gamma\in \Gamma_0$ 
of length at most $R$ with $\Lambda_p\circ\Psi\circ \Omega(\gamma)=B$ 
roughly equals $\frac{e^{hR}}{N(p)}\lambda(Z_0)/2ht_0$.
On the other hand, if we denote by
$R_p(2g)$ the subset of $Sp(2g,F_p)$
of elements with reducible 
characteristic polynomial then
\[\frac{\vert R_p(2g)\vert}{N(p)}<1-\frac{1}{3g}\] 
(see Theorem 6.2 of \cite{R08} for a reference to this
classical result of Borel).

We follow the proof of Theorem 6.2 of \cite{R08}.
Namely, let $p_1,\dots,p_k$ be $k$ distinct primes, and let
$K=p_1\cdots p_k$. 
Then the reduction $\Lambda_K(A)$
modulo $K$ of any element $A\in Sp(2g,\mathbb{Z})$ is defined, and
we have 
\[\Lambda_K(A)=\Lambda_{p_1}(A)\times \dots \times \Lambda_{p_k}(A).\]

It follows from the discussion preceding Proposition \ref{equiproject} 
that reduction mod $K$ defines a surjection of 
the semigroup $\Psi\Omega(\Gamma_0)$ 
onto the finite group 
$\Lambda_{p_1}(A)\times \dots \times \Lambda_{p_k}(A)=\Lambda_K(A)$.
Now if $A\in Sp(2g,\mathbb{Z})$ has a reducible characteristic
polynomial then the same holds true for 
$\Lambda_{p_i}(A)$ for all $i$. 
The proportion of the number of 
elements in $\Lambda_{p_1}(A)\times \dots \times \Lambda_{p_k}(A)$ 
with this property is at most $(1-\frac{1}{3g})^k$. 

By Proposition \ref{equiproject} (taking into account the comment
preceding the proposition), 
the above estimate implies that 
for large
enough $R$, the proportion 
of all orbits $\gamma$ of length at most $R$ with the property 
that the Galois group of 
the characteristic polynomial of $A(\gamma)$ is 
\emph{not} full is at most of the order of 
$(1-\frac{1}{3g})^k$. As $k\to \infty$, we conclude that
the Galois group of a typical periodic orbit for $\Phi^t$ is full.
Thus we have shown

\begin{corollary}\label{full}
Let ${\cal Q}$ be a component of the stratum of abelian or quadratic
differentials. The set of all $\gamma\in \Gamma$ such that the trace field
of $[A(\gamma)]$ is of degree $g$ over $\mathbb{Q}$, and 
$G(\gamma)=(\mathbb{Z}/2\mathbb{Z})^g\rtimes \Sigma_g$ is typical.
\end{corollary}

\begin{remark} 
The only property used in the proof of Corollary \ref{full} 
which is not available for components of strata of quadratic differentials
is the first part of Proposition \ref{surjectivity}. However, it is not 
hard to establish this part for strata of quadratic differentials, with the same
proof (and some extra combinatorial discussion). 
Thus we could use this
to show the analogue of Corollary \ref{full} for components of strata of 
quadratic differentials. This leads to stating that for a typical such orbit,
the trace field of the corresponding symplectic matrix $A$ has maximal
Galois group and hence either is totally real or completely imaginary. 
Furthermore, if this field is totally real then the roots of the characteristic
polynomial of $A$ are pairwise distinct. In spite of 
Corollary \ref{sec2typical}
and in contrast to the case of random walks on ${\rm Mod}(S)$ 
(see \cite{R08} for details), this does not imply however that the 
Lyapunov exponents of the Kontsevich Zorich cocycle over components of 
strata of quadratic differentials are pairwise distinct. 
\end{remark}

Let $\omega\in \tilde{\cal Q}$ be a lift of a point on a 
typical periodic orbit $\gamma$ for $\Phi^t$. The
periods of $\omega$ define an abelian subgroup 
$\Lambda=\omega(H_1(S,\mathbb{Z}))$ of 
$\mathbb{C}$ of rank two. Let $e_1,e_2\in \Lambda$ be two points
which are linearly independent over $\mathbb{R}$. 
Let $K$ be the smallest subfield of 
$\mathbb{R}$ such that every element 
of $\Lambda$ can be written as $ae_1+be_2$, with 
$a,b\in K$; then $\Lambda\otimes_KK=K^2$.
%
If we write $T=\Psi(A(\gamma))+\Psi(A(\gamma))^{-1}$, then the field $K$ 
also is the field of the characteristic polynomial
of $T$. We call $K$ the \emph{trace field} of $\gamma$
(see the appendix of \cite{KS00} for more details).

\begin{definition}\label{algebraic}
The periodic orbit $\gamma$ 
is called \emph{algebraically primitive} if 
the trace field $K$ of $\gamma$ is a totally real 
number field of 
degree $g$ over $\mathbb{Q}$, with maximal Galois group.
\end{definition}


The following corollary completes the proof of the second part of  
Theorem \ref{theolyapunov}.

\begin{corollary}\label{periodicprim}
Algebraically primitive periodic orbits for $\Phi^t$ are typical.
\end{corollary}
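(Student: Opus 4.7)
My plan is to deduce Corollary \ref{periodicprim} by intersecting two already-established typical conditions. The discussion immediately preceding the statement shows, via the sieving argument applied to Proposition \ref{equiproject} and the Borel-type count of reducible elements in $Sp(2g,F_p)$, that the set of $\gamma\in\Gamma$ whose characteristic polynomial $P_\gamma$ has full Galois group $(\mathbb{Z}/2\mathbb{Z})^g\rtimes S_g$ is typical. I would pair this with Theorem \ref{theolyapunov}(1), applied with $\epsilon$ small enough that $\epsilon<\tfrac{1}{2}\min_{1\le i<g}(\kappa_i-\kappa_{i+1})$ and $\epsilon<\tfrac{1}{2}\kappa_g$; the resulting set is also typical, and the intersection of two typical sets is typical. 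So it suffices to verify that each orbit $\gamma$ in the intersection is algebraically primitive in the sense of Definition \ref{algebraic}.

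For such a $\gamma$, the full Galois group hypothesis forces $P_\gamma$ to be irreducible of degree $2g$. The substitution $t=\lambda+\lambda^{-1}$ factors $P_\gamma(x)=\prod_{i=1}^{g}(x^2-t_ix+1)$, where the $t_i$ are the roots of a polynomial $Q_\gamma$ of degree $g$. The normal subgroup $(\mathbb{Z}/2\mathbb{Z})^g$ acts trivially on the set $\{t_1,\dots,t_g\}$, so the quotient $S_g$ acts transitively on it. Hence $Q_\gamma$ is the minimal polynomial of $t_1$, the trace field $K=\mathbb{Q}(t_1)$ has degree exactly $g$ over $\mathbb{Q}$, and its Galois closure has Galois group the maximal group $S_g$. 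This delivers two of the three defining conditions.

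For the remaining condition, that $K$ is totally real, the Lyapunov input is decisive. The choice of $\epsilon$ above guarantees that the $\alpha_i(\gamma)$ are strictly positive and pairwise distinct; equivalently, the eigenvalues $\lambda_1,\dots,\lambda_g$ of $A(\gamma)$ of absolute value greater than $1$ have pairwise distinct moduli, all strictly exceeding $1$. If some $\lambda_i$ were non-real, then $\overline{\lambda_i}$ would be an eigenvalue of $A(\gamma)$ of modulus $|\lambda_i|>1$ distinct from $\lambda_i$, and hence would have to coincide with some $\lambda_j$ for $j\ne i$, contradicting distinctness of the moduli. Thus every $\lambda_i$ is real, every $t_i=\lambda_i+\lambda_i^{-1}$ is real, and $K\subset\mathbb{R}$ is totally real.

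The only genuinely delicate point is this last step: the Galois-theoretic density argument by itself does not rule out $K$ from having non-real conjugates, and total realness must be extracted from the distinctness and strict positivity of the Lyapunov spectrum furnished by Theorem \ref{theolyapunov}(1). Apart from this dynamical ingredient the rest of the deduction is a clean piece of elementary Galois theory for reciprocal polynomials.
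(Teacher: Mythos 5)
Your proposal is correct and follows essentially the same route as the paper's own proof: intersect the typical set where the Galois sieve forces a full Galois group (hence degree $g$ and Galois group $S_g$ for the trace field) with the typical set from Theorem~\ref{theolyapunov}(1), deduce from distinctness of eigenvalue moduli that all eigenvalues of $A(\gamma)$ are real, and conclude that the roots $t_i=r_i+r_i^{-1}$ of the trace polynomial are all real so $K$ is totally real. You spell out the Galois-theoretic bookkeeping and the choice of $\epsilon$ somewhat more explicitly than the paper does, but the argument is the same.
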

\begin{proof} Since the Lyapunov spectrum of ${\cal Q}$ is simple
\cite{AV07}, the first part of Theorem \ref{theolyapunov} 
implies that for a typical periodic orbit $\gamma$, 
the absolute values of the eigenvalues of 
$A(\gamma)$ are pairwise distinct and hence all eigenvalues are real. Thus 
by the discussion following Proposition \ref{equiproject},
we only have to show that for a symplectic matrix 
$A\in Sp(2g,\mathbb{R})$ with $2g$ distinct real eigenvalues
$r_i,r_i^{-1}$ $(i\leq g, r_i>1)$
the field defined by $A+A^{-1}$ is totally real.
However, this is immediate from the fact that
the roots of the polynomial defining the trace field
are of the form $r_i+r_i^{-1}$ where 
$r_i$ are the roots of the characteristic polynomial of $A$.
\end{proof}


\section{The local structure of affine invariant manifolds}\label{localaffine}

In this section we begin the investigation of affine invariant manifolds. 
Our first goal is to gain some understanding of their local structure.
Most or perhaps all of the statements in this section
are known to the experts but hard to find in the literature.

We begin with introducing  the geometric setup which will be 
used throughout the remainder of this paper.





A point in \emph{Siegel upper half-space} 
${\mathfrak{D}}_g={\rm Sp}(2g,\mathbb{R})/U(g)$ is a principally polarized 
abelian variety of dimension $g$. 
Here as usual, $U(g)$ denotes the unitary group of rank $g$.
There is a natural rank $g$ 
holomorphic vector bundle $\tilde {\cal V}\to {\mathfrak{D}}_g$ whose fibre
over $y$ is just the complex vector space defining $y$.
The polarization and the complex structure define a
Hermitean metric $h$ on $\tilde {\cal V}$.
The group $Sp(2g,\mathbb{R})$ acts from the left on 
the bundle $\tilde {\cal V}$ 
as a group of bundle automorphisms 
preserving the polarization and the complex structure, 
and hence this action preserves the 
Hermitean metric. Thus 
the bundle $\tilde{\cal V}$ projects to a holomorphic Hermitean
(orbifold) vector bundle 
\[{\cal V} 
\to Sp(2g,\mathbb{Z})\backslash Sp(2g,\mathbb{R})/U(g)={\cal A}_g.\]

Let ${\cal M}_g$ be the moduli space of closed Riemann surfaces
of genus $g$. 
The \emph{Hodge bundle} 
\[\Pi:{\cal H}\to {\cal M}_g\] is the pullback of the
holomorphic bundle ${\cal V}\to 
{\cal A}_g$
under the \emph{Torelli map}
\[{\cal J}:{\cal M}_g\to {\cal A}_g=
Sp(2g,\mathbb{Z})\backslash Sp(2g,\mathbb{R})/U(g).\]
As the Torelli map is holomorphic, 
${\cal H}$ is a $g$-dimensional holomorphic Hermitean
vector bundle on ${\cal M}_g$ (in the orbifold sense). 
Its fibre over $x\in {\cal M}_g$
can be identified with 
the vector space of holomorphic one-forms on $x$.
The Hermitean inner product on ${\cal H}$ 
is given by 
\[(\omega,\zeta)=\frac{i}{2}\int \omega\wedge\overline{\zeta}.\]
With this interpretation, 
the sphere bundle in ${\cal H}$ for the 
inner product $(,)$ is just  the moduli space 
of area one abelian differentials.


Each point $x\in {\cal M}_g$ determines a 
complex structure $J_x$ on the first real cohomology $H^1(S,\mathbb{R})$.
Namely, every cohomology class $\alpha\in H^1(S,\mathbb{R})$ 
can be represented by a unique harmonic one-form for the
complex structure $x$, and this one-form is the real part of 
a unique holomorphic one-form $\omega$ on $x$. 
The imaginary part of this holomorphic one-form 
is a harmonic one-form which represents the 
cohomology class $J_x\alpha$. The complex structure $J_x$ is 
\emph{compatible} with the intersection form $\iota$, i.e. we have
\[\iota(J_x\alpha,J_x\beta)=\iota(\alpha,\beta)\text{ for all }
\alpha,\beta\in H^1(S,\mathbb{R}).\]

With this interpretation, the flat vector bundle 
over ${\cal M}_g$ whose fibre at any point equals the complex
cohomology group $H^1(S,\mathbb{C})=H^1(S,\mathbb{R})\otimes \mathbb{C}$
can be decomposed as 
\[H^1(S,\mathbb{C})={\cal H}\oplus \overline{\cal H}\]
where the \emph{holomorphic} bundle 
${\cal H}=\{\alpha+ iJ\alpha\mid \alpha\in H^1(S,\mathbb{R})\}$
admits a natural identification with the bundle of holomorphic one-forms 
on ${\cal M}_g$, i.e. ${\cal H}$ is just the Hodge bundle over ${\cal M}_g$.

As a real vector bundle, the bundle ${\cal H}$ is isomorphic to 
the bundle with fibre 
$H^1(S,\mathbb{R})$.
Via the identification
${\cal H}\sim H^1(S,\mathbb{R})$ as real vector bundles, 
we can view the Gauss Manin connection as a flat connection on
${\cal H}$. 
This bundle ${\cal H}$ is equipped with a 
complex structure $J$ which varies real analytically with 
the basepoint.



Denote by ${\cal H}_+\subset {\cal H}$ 
the complement of the zero section in
the Hodge bundle ${\cal H}$. This is a complex orbifold. 
The pull-back
\[\Pi^*{\cal H}\to {\cal H}_+\]
to ${\cal H}_+$ of the Hodge bundle on ${\cal M}_g$ is a 
holomorphic vector bundle on ${\cal H}_+$. The pull-back of the
Gauss-Manin connection is a flat connection on $\Pi^*{\cal H}$.
In the sequel we simply write ${\cal H}$ for the bundle
$\Pi^*{\cal H}$ whenever what is meant is clear from the context.
This is precisely the notation we used in the
previous sections, where we also used the Gauss Manin connection
on $\Pi^*{\cal H}$.

Let ${\cal Q}_+\subset {\cal H}_+$ be a component of 
a stratum of abelian differentials. 
We use the notation ${\cal Q}_+$ to indicate that
unlike in the previous sections,  
we do not normalize the area of an abelian differential.
Then ${\cal Q}_+$ is a complex suborbifold
of ${\cal H}_+$. Period coordinates for ${\cal Q}_+$ define the
complex structure.

The component $GL^+(2,\mathbb{R})$ 
of the identity of the full linear group $GL(2,\mathbb{R})$ 
acts on ${\cal H}$, and this 
action preserves ${\cal Q}_+$.
An \emph{affine invariant manifold} ${\cal C}_+$ in 
${\cal Q}_+$ is the closure in ${\cal Q}_+$ of an orbit 
of the $GL^+(2,\mathbb{R})$-action. 
Such a manifold is complex affine in period coordinates \cite{EMM15}. 
In particular, 
${\cal C}_+\subset {\cal Q}_+$ is a complex suborbifold. 
Period coordinates determine a projection
\[p:T{\cal C}_+\to {\cal H}\oplus \overline{\cal H}=H^1(S,\mathbb{C})\]
to absolute periods (see \cite{W14} for a 
clear exposition). 
The image $p(T{\cal C}_+)$ is a flat subbundle of 
the flat complex vector bundle 
${\cal H}\oplus\overline{\cal H}\vert_{{\cal C}_+}=
H^1(S,\mathbb{C})\vert_{{\cal C}_+}$.
The map $p$ is compatible with respect to the 
complex structure on ${\cal C}_+$ defined  
by period coordinates and the constant complex structure on 
$H^1(S,\mathbb{C})$.

By the main result of \cite{F16}, 
there is a \emph{holomorphic}
subbundle ${\cal Z}$ of ${\cal H}\vert_{{\cal C}_+}$ such that
\[p(T{\cal C}_+)={\cal Z}\oplus \overline{\cal Z}.\]  
We call ${\cal Z}$ the \emph{absolute holomorphic tangent bundle}
of ${\cal C}_+$. 
In particular, the bundle $p(T{\cal C}_+)$ is invariant under the 
complex structure.

As a real vector bundle, 
${\cal Z}$ is isomorphic to $p(T{\cal C}_+)\cap H^1(S,\mathbb{R})$.
This bundle is
invariant under the compatible complex structure, in particular
${\cal Z}\subset H^1(S,\mathbb{R})\vert_{{\cal C}_+}$ is 
symplectic. Moreover, the bundle is flat, i.e. it is 
invariant under the restriction of the Gauss Manin connection to
${\cal C}_+$ \cite{F16}.

Define the \emph{rank} of the affine invariant manifold
${\cal C}_+$ as 
\[{\rm rk}({\cal C}_+)=\frac{1}{2}{\rm dim}_{\mathbb{C}} \, p(T{\cal C}_+)
={\rm dim}_{\mathbb{C}}{\cal Z}.\] 
With this definition, components of strata are affine invariant
manifolds of rank $g$.

Every component ${\cal Q}$ of a stratum in 
the bundle of area one abelian differentials 
which consists of differentials with at least two zeros
admits 
a foliation ${\cal A\cal P}({\cal Q})$ whose leaves locally consist of differentials with 
the same absolute periods. This foliation is called the
\emph{absolute period foliation} 
(we adopt this terminology from \cite{McM13},
other authors call it the relative period foliation).
The leaves of this foliation admit  a complex 
affine structure (see e.g. \cite{McM13}).  




If ${\cal C}\subset {\cal Q}$ is an affine invariant manifold whose 
dimension is strictly bigger than its rank then 
${\cal C}$ intersects the leaves of the absolute period foliation of 
${\cal Q}$ nontrivially. This fact alone does not imply that 
${\cal C}\cap {\cal A\cal P}({\cal Q})$ is a foliation 
of ${\cal C}$. Our next goal is to establish
some
structural results on ${\cal C}\cap {\cal A\cal P}({\cal Q})$. 

We begin with 
collecting some more detailed information on the absolute 
period foliation of the stratum ${\cal Q}$. 
 Its tangent bundle $T{\cal A\cal P}({\cal Q})$ 
has an explicit description via so-called 
\emph{Schiffer variations} \cite{McM13}.

Let first $\omega$ be an abelian differential with a 
simple zero $p$.
There are four horizontal separatrices at $p$ for the flat metric 
defined by $\omega$. In a complex coordinate $z$ near  
$p$ so that $\omega=(z/2)dz$, 
the horizontal separatrices are the four rays contained in the 
real or the imaginary axis. 
The restriction of $\omega$ to these rays defines
an orientation on the rays. With respect to this orientation, 
the two rays contained in the real
axis are outgoing from $p$, 
while the rays contained in the
imaginary axis are incoming. The \emph{Schiffer variation} 
with weight one at $p$ is the tangent at $\omega$ of a deformation
obtained by cutting $S$ open along the vertical axis at $p$ and 
refold so that the singular point $p$ slides  
backwards along the incoming rays in the 
imaginary axis. We refer to \cite{McM13} for a more detailed
description.


If $\omega$ has a zero of order $n\geq 2$ at $p$ then 
the Schiffer variation at $p$ is defined as follows (see
p.1235 of \cite{McM13}).  Choose a coordinate $z$ near
$p$ so that $\omega=(z^n/2)dz$ in this coordinate. 
This choice of coordinate is unique up to multiplication with
$e^{\ell 2\pi i/(n+1)}$ for some $\ell\leq  n$. 
There are $n+1$ horizontal separatrices 
at $p$ for the flat metric defined by $\omega$ whose orientations
point towards $p$. For small $u>0$
cut the surface $S$ open along the initial subsegments of length $2u$ 
of these 
$n+1$ segments.  
The result is a $2n+2$-gon which we refold as in the 
case of a simple zero. As before, we call the tangent at $\omega$
of this deformation the Schiffer variation with weight one at $p$.

Now let ${\cal Q}$ be a stratum of abelian differentials consisting
of differentials with $k\geq 1$ zeros.
By passing to a finite cover we may assume that
the zeros are \emph{numbered}. For $\omega\in {\cal Q}$ 
let $Z(\omega)$ be the set of numbered zeros of $\omega$.
Let moreover $V(\omega)\sim \mathbb{C}^k$ be the complex vector space
freely generated by the set $Z(\omega)$. 
Then the tangent space $T{\cal A\cal P}({\cal D})$ 
of the absolute period foliation of ${\cal D}$ 
at $\omega$ is naturally isomorphic to the hyperplane in $V(\omega)$ 
of all points whose coordinates sum up to zero \cite{McM13,H15},
i.e. of points with zero mean.

More explicitly, 
let 
$\mathfrak{a}=(a_1,\dots,a_{k})\in \mathbb{R}^{k}$  be any 
$k$-tuple of \emph{real} numbers with $\sum_ia_i=0$.  Then 
$\mathfrak{a}$ defines a smooth vector field $X_{\mathfrak{a}}$ on 
${\cal Q}$ as follows. For each $\omega\in {\cal D}$, the value 
of $X_{\mathfrak{a}}$ at $\omega$ is the Schiffer variation for the 
tuple
$(a_1,\dots,a_{k})$ of speed parameters 
at the numbered zeros of $\omega$.
Thus $X_{\mathfrak{a}}$ is tangent to the absolute period foliation.
The real vector space of dimension $k-1$ spanned by these
vector fields is the tangent bundle of the \emph{real rel foliation}
${\cal R}$ which is the intersection of the absolute period foliation
with the \emph{strong unstable foliation} $W^{su}$ of ${\cal Q}$.
The leaf of this foliation through $q\in {\cal Q}$ locally consists
of all differentials with the same horizontal measured geodesic lamination
as $q$. 
We refer again to \cite{H15} for references. 
Denote by $\Lambda^t_{\mathfrak{a}}$ the flow defined by
the Schiffer variation $X_{\mathfrak{a}}$ for the weight $\mathfrak{a}$. 
The flow lines for this flow 
are contained in the leaves of ${\cal R}$. 

Similarly, we define the \emph{imaginary rel foliation} of ${\cal Q}$ 
to be the intersection of the absolute period foliation with 
the \emph{strong stable foliation} $W^{ss}$ of ${\cal Q}$.
The leaf of the foliation $W^{ss}$ through $q$ locally consists of 
all differentials with the same vertical measured geodesic lamination as $q$.
Exchanging the roles of the horizontal and the vertical foliation
in the definition of the Schiffer variations identifies the tangent bundle
of the imaginary rel foliation of ${\cal Q}$ with the purely imaginary
weight vectors of zero mean on the numbered zeros of the differentials
in ${\cal Q}$. As the tangent bundle of the absolute period 
foliation is spanned by its intersection with the tangent bundle
of the strong stable and the strong unstable foliation, 
mapping a real weight vector to its multiple with $i=\sqrt{-1}$
defines a natural almost complex structure $J$ on 
$T{\cal A\cal P}({\cal Q})$. This almost complex structure is in fact
integrable \cite{McM13}.

The Teichm\"uller flow $\Phi^t$ preserves the absolute period 
foliation.  
The following is Lemma 2.2 of \cite{H15}.

\begin{lemma}\label{invariance3}
$d\Phi^tX_{\mathfrak{a}}=e^{t}X_{\mathfrak{a}}$ 
and $d\Phi^tX_{i\mathfrak{a}}=e^{-t}X_{i \mathfrak{a}}$ for every
$\mathfrak{a}\in \mathbb{R}^k$ with zero mean.
\end{lemma}

We observe next that 
an affine invariant submanifold ${\cal C}$ of ${\cal Q}$ 
intersects the absolute period foliation of ${\cal Q}$ in a 
real analytic foliation 
${\cal A\cal P}({\cal C})$ with complex affine leaves. 
For the formulation, we denote by $\hat{\cal Q}$ a finite cover of ${\cal Q}$
on which the zeros of the differentials are numbered.

\begin{lemma}\label{affine}
Let ${\cal C}$ be an affine invariant submanifold of $\hat{\cal Q}$ 
with $r={\rm dim}({\cal C}_+)-2{\rm rk}({\cal C})>0$.
Then ${\cal C}$ intersects the real rel foliation in a real
analytic foliation of real dimension $r$. Furthermore, if 
$q\in {\cal C}$ and 
$\mathfrak{a}\in \mathbb{R}^{k}$ are such that
$X_{\mathfrak{a}}(q)\in T{\cal A\cal P}({\cal C})$ then 
$X_{\mathfrak{a}}(z)\in 
T{\cal A\cal P}({\cal C})$, $X_{i\mathfrak{a}}(z)=
JX_{\mathfrak{a}}(z)\in T{\cal A\cal P}({\cal C})$
for every $z\in {\cal C}$.
\end{lemma}
\begin{proof} 
Let ${\cal C}\subset \hat{\cal Q}$ be an affine invariant manifold. 
For the purpose of the lemma, we may assume that 
\[r={\rm dim}({\cal C}_+)-2 {\rm rk}({\cal C})>0.\] 
Then for each $q\in {\cal C}$ there is a vector 
$X\in T_q{\cal A\cal P}(\hat{\cal Q})$ which is 
tangent to ${\cal C}$.
By invariance of ${\cal C}$ under
the Teichm\"uller flow, we have
$d\Phi^t(X)\in 
T{\cal A\cal P}(\hat{\cal Q})\cap T{\cal C}$  
for all $t$.

A vector 
$X\in  T{\cal C}\cap T{\cal A\cal P}(\hat{\cal Q})$ decomposes as
$X=X^u+X^s$ where $X^u\in T{\cal A\cal P}(\hat{\cal Q})$
is real (and hence tangent to the strong unstable foliation) 
and $X^s$ is imaginary (and hence tangent
to the strong stable foliation). 
We claim that we can find a vector 
$Y\in  T{\cal C}\cap T{\cal A\cal P}(\hat{\cal Q})$ 
which either is tangent to the strong
unstable or to the strong stable foliation. 
To this end we may assume that $X^u\not=0$. Since this is an open
condition and since the Teichm\"uller flow on ${\cal C}$
is topologically transitive, we may furthermore assume 
that the $\Phi^t$-orbit 
of the footpoint $q$ of $X$ is 
dense in ${\cal C}$. 
Then there is a sequence
$t_i\to \infty$ such that $\Phi^{t_i}(q)\to q$. 

Choose any smooth norm $\Vert \,\Vert$ on $T\hat{\cal Q}$. 
As $X\not=0$, up to passing to a subsequence, 
\[d\Phi^{t_i}(X)/\Vert d\Phi^{t_i}(X)\Vert\] converges to a vector
$Y\in T_q{\cal A\cal P}(\hat {\cal Q})$ 
which is tangent to the strong unstable foliation.
As the bundle $T{\cal C}\cap T{\cal A\cal P}(\hat{\cal Q})$ 
is a smooth $d\Phi^t$-invariant subbundle of the 
restriction of the tangent bundle of 
$\hat{\cal Q}$ to ${\cal C}$, we have 
$Y\in T{\cal C}\cap T{\cal A\cal P}(\hat{\cal Q})$ which is what we
wanted to show.

Using Lemma \ref{invariance3} and density of the $\Phi^t$-orbit of $q$,
if $0\not=\mathfrak{a}\in \mathbb{R}^{k}$ is a vector of zero mean
such that 
$Y=X_{\mathfrak{a}}(q)$
then $X_{\mathfrak{a}}(u)\in T{\cal C}$
for all $u\in {\cal C}$. 
As a consequence,
${\cal C}$ is invariant under the flow $\Lambda_{\mathfrak{a}}^t$.

By invariance of $T{\cal C}_+$ 
under the complex structure $J$,
if $r=1$ then 
\[T{\cal C}\cap T{\cal A\cal P}(\hat {\cal Q})=
\mathbb{R}X_{\mathfrak{a}}\oplus J\mathbb{R}X_{\mathfrak{a}}\]
and we are done. Otherwise 
there is a tangent vector $Y\in T{\cal C}\cap
T{\cal A\cal P}(\hat {\cal Q})-\mathbb{C}X_{\mathfrak{a}}$.
Apply the above argument to $Y$, perhaps via
replacing the Teichm\"uller flow by its inverse. In finitely many
such steps we conclude that
there is a smooth subbundle 
${\cal B}$ of $T{\cal C}\cap T{\cal A\cal P}(\hat {\cal Q})$
which is tangent to the strong unstable foliation 
(i.e. real for the real structure), of rank $r$, and such that
$T{\cal C}\cap T{\cal A\cal P}(\hat{\cal Q})=\mathbb{C}{\cal B}$.
Moreover,  
if $\omega\in {\cal C}$ and if  $\mathfrak{a}\in \mathbb{R}^k$ is such that 
$X_{\mathfrak{a}}(\omega)\in {\cal B}$ 
then $X_{\mathfrak{a}}(q)\in {\cal B}$ for every $q\in {\cal C}$. 
As a consequence, 
${\cal C}$ is invariant under the flow
$\Lambda^t_{\mathfrak{a}}$.
The same argument applied to the imaginary
subbundle $i{\cal B}$ of $T{\cal C}\cap T{\cal A\cal P}(\hat {\cal Q})$ 
yields the statement of the lemma.
\end{proof}

Recall the definition of the foliation $W^{ss},W^{su}$ of ${\cal Q}$. 
As a corollary, we obtain

\begin{corollary}\label{affinelocal}
Let ${\cal C}$ be an affine invariant manifold with 
${\rm dim}_{\cal C}({\cal C}_+)=r$. Then 
${\cal C}\cap W^i$ is a smooth foliation of ${\cal C}$ into 
leaves of real dimension $r-1$. 
\end{corollary}
\begin{proof} We show the statement of the corollary for 
the foliation $W^{su}$, the statement for the foliation 
$W^{ss}$ is completely analogous.

By Lemma \ref{affine}, the intersection of ${\cal C}$ with a
leaf of $W^{su}$ is foliated into leaves of the real Rel
foliation of real dimension $r-2{\rm rk}({\cal C}_+)=m$. 

The image of the projection $p:T{\cal C}_+\to H	^1(S,\mathbb{C})$ 
is a flat complex subbundle of the flat bundle 
$H^1(S,\mathbb{C})\vert {\cal C}_+$ which is invariant 
under multiplication with $i$. This shows that 
for a differential $z\in {\cal C}_+$ near $q$,
the set of all differentials in ${\cal C}_+$ whose absolute periods
coincide with the absolute periods of 
$z$ and whose imaginary parts coincide with the imaginary
part of $z$ is a submanifold of 
${\cal C}_+$ of dimension $m$.
The intersection of ${\cal C}_+$ with the leaf of the
strong unstable foliation is the union of these
submanifolds over all points with the property that
the imaginary part of the absolute period coincides
with the imaginary part of the absolute period of $q$.
From this the corollary follows. 
\end{proof}

\section{Local Zariski density for affine invariant manifolds}
\label{local}

The goal of this section is to prove a weaker analogue of the first part of 
Proposition \ref{surjectivity} for affine invariant manifolds. 
Throughout this section we assume that $g\geq 3$. We use the 
assumptions and notations from Section \ref{localaffine}.

Let ${\cal Q}_+\subset {\cal H}_+$ 
${\cal C}_+$
Recall from Section \ref{localaffine} that the image 
of the projection $p:T{\cal C}_+\to H^1(S,\mathbb{C})$ 
to absolute periods is a flat subbundle 
${\cal Z}\oplus \overline{\cal Z}$ of 
$H_1(S,\mathbb{C})\vert {\cal C}_+$ which is invariant 
under the complex structure. We denote by $\ell\geq 1$ its complex
dimension. Then $p(T{\cal C}_+)\cap H^1(S,\mathbb{R})$ is a 
symplectic subspace of dimension $2\ell$ which can be identified
with the holomorphic bundle ${\cal Z}$ as a real vector bundle.

\begin{definition}\label{monodromygroup}
The \emph{monodromy group} of the affine invariant invariant manifold
${\cal C}_+$ of rank $\ell$ is the subgroup of $Sp(2\ell,\mathbb{R})$ which is  
the monodromy of the bundle ${\cal Z}$ 
for the restriction of the Gauss Manin connection.
\end{definition}

\begin{example} 
If ${\cal C}_+$ is a Teichm\"uller curve then the monodromy
group of ${\cal C}_+$ is just the Veech group of ${\cal C}_+$. 
Thus this monodromy group is a lattice in $Sp(2,\mathbb{R})=
SL(2,\mathbb{R})$, in particular it is Zariski dense in 
$SL(2,\mathbb{R})$. 

The monodromy group of a component of a stratum is a subgroup of 
$Sp(2g,\mathbb{Z})$. 
\end{example}

The goal of this section is to show

\begin{theorem}\label{monodromy}
The monodromy group of an affine invariant manifold ${\cal C}_+$ 
of rank $\ell$ is a Zariski dense subgroup of 
$Sp(2\ell,\mathbb{R})$.
\end{theorem}

We begin with summarizing a result of Wright \cite{W15}. 
He introduced the following two deformations of a translation
surface $(X,\omega)$ (i.e. a Riemann surface $X$ equipped
with a holomorphic one-form $\omega$).
 
Recall that  the \emph{horocycle flow} is defined as part of the 
$SL(2,\mathbb{R})$-action,
\[u_t=
\begin{pmatrix} 1 &t\\
0& 1\end{pmatrix} \subset SL(2,\mathbb{R}),\]
and the \emph{vertical stretch} is defined by
\[a_t=
\begin{pmatrix}
1 & 0\\
0 & e^t\end{pmatrix}\subset GL^+(2,\mathbb{R}).\]
For a collection ${\cal Y}$ of horizontal cylinders on a
translation surface $X$, define the \emph{cylinder shear}
$u_t^{\cal Y}(X)$ to be the
translation surface obtained by applying the horocycle flow
to the cylinders in ${\cal Y}$ but not to the rest of $X$.
Similarly, the \emph{cylinder stretch} $a_t^{\cal Y}(X)$ 
is obtained by applying the vertical stretch only to the cylinders in 
${\cal Y}$. 

The following lemma is a fairly easy consequence
of the work of 
Wright \cite{W15}. For its formulation, a translation surface
$(X,\omega)$ is called \emph{horizontally periodic} if 
it is a union of horizontal cylinders.

\begin{lemma}\label{rational}
Let ${\cal C}_+$ be an affine invariant manifold of rank $\ell$.
Suppose that $(X,\omega)\in {\cal C}_+$ is horizontally periodic,
and that there is a decomposition of $X$
into $\ell$ collections ${\cal Y}_1,\dots,{\cal Y}_\ell$ of horizontal
cylinders so that for each $i$, the cylinder shear 
$u_t^{{\cal Y}_i}(X)$
remains in ${\cal C}_+$. Then for each $i$, the moduli
of the cylinders in the collection ${\cal Y}_i$ are rationally dependent.  
\end{lemma}
\begin{proof}
For each $i$, the collection ${\cal Y}_i$ consists of $r_i\geq 1$ cylinders.
By assumption, the cylinder shear $u_t^{{\cal Y}_i}(X)$ remains entirely in 
${\cal C}_+$.
A local version of Lemma 3.1 of \cite{W15}, applied to this
cylinder shear rather than to the full horocycle flow 
(note that the proof of this local version is
identical to the proof given in Section 3 of \cite{W15}) shows the following.
If the moduli $m_1^i,\dots,m_{r_i}^i$ of the cylinders in ${\cal Y}_i$
are not rationally dependent then
there is a proper subcollection ${\cal V}$ of ${\cal Y}_i$ consisting of 
$1\leq s<r_i$ cylinders so that
the cylinder shear $u_t^{\cal V}$ 
for this subcollection is contained 
in ${\cal C}_+$. 

But then there are at least $\ell+1$ pairwise distinct 
collections of horizontal cylinders in $(X,\omega)$ with the property that 
the cylinder shear of $X$ 
for each of these collections
is contained in ${\cal C}_+$ (see Section 3 of \cite{W15} for details).
This violates Theorem 1.10 of \cite{W15} and yields that indeed,
for fixed $i$ the moduli $m_j^i$ $(1\leq j\leq r_i$) of the cylinders in 
${\cal Y}_i$ are rationally dependent. 
\end{proof}

Define a \emph{piecewise affine automorphism} of a translation surface
$(X,\omega)$ to be a continuous self-map $F:X\to X$ with the property
that there exists a decomposition $X=\cup_iX_i$ into finitely many
components with geodesic boundary which is preserved by $F$ and such 
that the restriction of $F$ to each of these components is affine. 
In contrast to an affine automorphism of $(X,\omega)$, we allow 
that the restriction of $F$ to some of the components $X_i$ equals the 
identity. 

The main consequence of Lemma \ref{rational} we are going to use
is the following

\begin{corollary}\label{veechgroup}
Let ${\cal C}_+$ be an affine invariant manifold of rank $\ell\geq 2$.
Then there is a horizontally periodic surface $(X,\omega)\in {\cal C}_+$
which admits an abelian group $D$ of rank $\ell$ of 
piecewise affine automorphisms. The group $D$ acts on $S$ as a group of 
Dehn-multitwists, and it acts on  
$H^1(S,\mathbb{R})$ as a group of transvections of rank $\ell$.
\end{corollary}
\begin{proof}
By Theorem 1.10 of \cite{W15} and its proof (more precisely
the results in Section 8 of \cite{W15}), 
the affine invariant manifold ${\cal C}_+$ contains 
a horizontally periodic surface $(X,\omega)$ 
which admits a decomposition into $\ell$ cylinder families
${\cal Y}_1,\dots,{\cal Y}_\ell$ with the properties
stated in Lemma \ref{rational}. 
Moreover, for each
$i$ and each $t$ the image of $X$ under the vertical stretch
$a_t^{{\cal Y}_i}(X)$ is contained in ${\cal C}_+$. These vertical
stretches commute. 

The vertical stretch $a_t^{{\cal Y}_i}$ changes the heights 
of the horizontal cylinders in the family ${\cal Y}_i$ 
while keeping their circumferences fixed. The image translation surfaces
are horizontally periodic. 
Using Lemma \ref{rational}, this implies that 
we can find
$t_1,\dots,t_\ell\in \mathbb{R}$ so that the modulus of \emph{every} horizontal cylinder 
in 
\[Z=a_{t_1}^{{\cal Y}_1}\cdots a_{t_\ell}^{{\cal Y}_\ell}(X)\] is rational.

Using again the results in Section 8 of \cite{W15}, 
the affine invariant
manifold ${\cal C}_+$ contains the images of the translation surface $Z$ 
under the cylinder shears $u_t^{{\cal Y}_i}(Z)$ where 
by abuse of notation, we denote again by ${\cal Y}_i$ the cylinder family on 
$Z$ which is the image of the horizontal cylinder family ${\cal Y}_i$ on $(X,\omega)$. 
As the moduli of all cylinders in the family ${\cal Y}_i$ 
are rational, these cylinder shears are eventually periodic. This means that 
for each $i$ there exists some number $r_i>0$ such that 
for some 
fixed marking of $Z$, the surface $u_{r_i}^{{\cal Y}_i}(Z)$ is the image of $Z$ by a 
Dehn multitwist $T_i$ about the core curves of the cylinders in ${\cal Y}_i$. 

Since the core curves of the horizontal cylinders in $Z$ are pairwise disjoint, 
the Dehn multitwists $T_i$ commute. Therefore these
multitwists generate an abelian group of rank $\ell$ of piecewise
affine automorphisms of $Z$.
The multitwist $T_i$ acts as a transvection on
$H_1(S,\mathbb{R})$ by some multiple of 
the homology class defined by the family ${\cal Y}_i$. The proposition now follows
from another application of Theorem 1.10 of  
\cite{W15}: The rank of the subspace of 
$H_1(S,\mathbb{R})$ spanned by the homology
classes of the cylinder families ${\cal Y}_i$ $(i\leq \ell)$ 
equals $\ell$.
\end{proof}

The following result is the analogue of 
the first part of Corollary \ref{connected} for 
affine invariant manifolds.
For its formulation, we define 
as in Section 3 an affine invariant
manifold ${\cal C}_+$ of rank $\ell$ 
to be \emph{locally Zariski dense} if for every open contractible subset
$U$ of ${\cal C}_+$ 
the subsemigroup of $Sp(2\ell,\mathbb{R})$ generated by
the monodromy of those periodic orbits for $\Phi^t$ in ${\cal C}_+$ 
which pass through $U$ 
is Zariski dense in $Sp(2\ell,\mathbb{R})$. Note that this makes
sense because the bundle ${\cal Z}\to {\cal C}_+$ is flat
and therefore admits a family of canonical 
symplectic trivializations over 
the contractible set $U$. Replacing one of these trivializations
by another one changes the local monodromy group by 
a conjugation.  
 

\begin{proposition}\label{localzariski}
An affine invariant manifold is locally Zariski dense.
\end{proposition} 
\begin{proof} 
Let ${\cal C}_+$ be an affine invariant manifold of 
rank $\ell\geq 1$, and let ${\cal C}\subset {\cal C}_+$ be its subset of 
differentials of area one.
We use an argument similar to the proof of Proposition \ref{surjectivity}
(compare also the proof of Theorem 5.1 of \cite{W14} for an
argument along these lines). Following 
the reasoning in the proof of Proposition \ref{surjectivity},
it suffices to show the existence of a single birecurrent point
$q\in {\cal C}$ with the following property. For every
open neighborhood $U$ of $q$, the subgroup of $SL(2\ell,\mathbb{R})$
generated by the monodromoy of those periodic orbits for
$\Phi^t$ in ${\cal C}$ which pass through $U$ is Zariski
dense in $SL(2\ell,\mathbb{R})$.

To this end choose a translation surface 
$(X,\omega)\in {\cal C}$ with the properties stated in 
Corollary \ref{veechgroup}. Denote by $H$ the abelian group of 
rank $\ell$ of Dehn multitwists which is contained in the group of 
piecewise affine automorphisms of $X$.

Choose a contractible 
neighborhood $U$ of $\omega$ in ${\cal C}$ which is small 
enough that the bundle ${\cal Z}\vert U$ is 
trivial as a symplectic vector bundle,
with a real analytic 
trivialization.
Let $\tilde {\cal C}$ be a component of the preimage of ${\cal C}$ in the
Teichm\"uller space of area one abelian differentials and 
let $\tilde U$ be a lift of $U$ to $\tilde {\cal C}$. We may assume that
the canonical
projection $\tilde U\to U$ is a homeomorphism. 

By Corollary \ref{affinelocal}, every point $q\in {\cal C}$ admits a 
neighborhood with a \emph{local product structure}. This means 
the following. Let $\Sigma$ be the set of zeros of a differential in ${\cal Q}$. 
By possibly decreasing the size of $U$ 
we may assume that there are two disjoint  
compact subsets $D,K$ of the 
set of (marked) projective measured geodesic laminations on $S$,
viewed as points in $H^1(S,\Sigma)$ via intersection,
with the following property. The sets $D,K$ are 
homeomorphic to closed balls of dimension 
$h={\rm dim}_\mathbb{C}({\cal C}_+)-1$. Furthermore, 
there is a continuous map $\omega:D\times K\to \tilde U$ 
such that 
for any pair $(\xi,\nu)\in D\times K$,
the vertical projective measured geodesic lamination of $\omega(\xi,\nu)$
equals $\xi$ and its horizontal projective measured geodesic lamination 
equals $\nu$. Moreover, 
there is some $\epsilon >0$ such that
\[\tilde V=\cup_{-\epsilon\leq t\leq \epsilon}\cup_{(\mu,\nu)\in D\times K}  
\Phi^t\omega(\mu,\nu)\]
is a compact subset of $\tilde U$ whose dense interior 
contains the lift $\tilde \omega$ of $\omega$ to $\tilde U$.

%

The cylinder shears of the translation surface $(X,\omega)$
which are used to construct the Dehn multi-twists $T_i$
preserve the horizontal projective measured lamination of $\omega$, but
they modify the vertical projective measured lamination. 
These cylinder shears define $\ell$ smooth paths $c_i$ in ${\cal C}$ which lift
to smooth paths $\tilde c_i$ in $\tilde {\cal C}$ beginning at the 
preimage $\tilde \omega$ of $\omega$ in $U$ and connecting
$\tilde \omega$ to $T_i\tilde \omega$. 

Fix $i\leq \ell$ and write $T=T_i$ and $\tilde c=\tilde c_i$ for simplicity. 
Recall that the set $D$ is homeomorphic to a compact ball of dimension $h$, and 
the same holds true for $TD$. Furthermore, $D\cap TD$ contains the 
projective class $\mu$ of the horizontal projective measured lamination 
of $\tilde \omega$. 
We claim that there is an open neighborhood $E$ of $\mu$ in $D$ which 
is contained in $D\cap TD$. 

To this end cover the compact path $\tilde c$ by finitely many
open subsets $U_i$ $(i=0,\dots,k)$ 
of $\tilde {\cal C}$ whose closures $\overline{U_i}$  
have a product structure as described above.
These product structures are defined by compacts sets
$D_i, K_i$ in 
the space of projective measured geodesic laminations. 
For each $i$, the set $D_i$ is  
homeomorphic to
an $h$-dimensional compact ball and coincides with the
set of all vertical projective measured laminations of all points 
in $\overline{U_i}$.

Let $i,j$ be such that 
$U_i\cap U_j\cap \tilde c\not=\emptyset$. Then 
$D_i\cap D_j$ contains the projective class $\mu$ of the horizontal
measured geodesic lamination of $\tilde \omega$. 
Now $U_i\cap U_j$ is open and therefore 
locally near a point $x\in U_i\cap U_j\cap \tilde c$, 
period coordinates define a local product structure on a neighborhood
$E$ of $x$ of the form described above, determined by
a pair of subsets of the space of projective measured geodesic
laminations which are homeomorphic to compact balls of dimension
$h$. These sets
contain all projective classes of vertical and horizontal measured
geodesic laminations, respectively, of points in $E$.
As $E\subset U_i\cap U_j$, this implies that 
$D_i\cap D_j$ contains a compact neighborhood of 
$\mu$ in $D$, 
and by induction, the same holds true for $\cap_iD_i$. 
In particular, there is a compact neighborhood $R\subset D\cap TD$
of $\mu$ which is homeomorphic to a compact ball of dimension $h$. 
Similarly, by making $K$ smaller we may assume that
the sets $K$ and $TK$ are disjoint.

Let $\tilde V=\cup_{-\epsilon\leq t\leq \epsilon}
\cup_{(\xi,\nu)\in R\times K}\Phi^t\omega(\xi,\nu)
\subset \tilde U$. 
Let $V\subset U$ be the projection of $\tilde V$ into ${\cal C}$.
Choose a periodic orbit $\gamma$ for $\Phi^t$ which is generated by
an abelian differential $q$ contained in 
the interior of $V$. Let $\tilde q$ be the preimage of $q$ in 
$\tilde V$. We may assume that the vertical projective 
measured lamination $\zeta_1$ of $\tilde q$ 
as well as its image under $T$ are contained in the interior of $R$.
We use the birecurrent point $q$ as a 
starting point for the construction of a set $\Gamma_0$ of periodic
orbits passing through $V$ as in Proposition \ref{grouplaw}. 
Denote by $\Omega(\Gamma_0)$ the corresponding sub-semigroup of 
${\rm Mod}(S)$ and let $G<{\rm Mod}(S)$ be the subgroup generated 
by $\Omega(\Gamma_0)$.  

We claim that $G$ contains the 
Dehn multitwist $T=T_i$. Since $G$ is a group, 
this is the case
if we can find a pseudo-Anosov
element $\phi\in \Gamma_0$ such that $T \phi\in \Gamma_0$. 

We establish this fact using a 
fixed point argument for the action of ${\rm Mod}(S)$ on 
the sphere of projective measured geodesic laminations 
which  is motivated by
the argument in the proof of Proposition 5.4 of \cite{H13}. 
Let $\phi$ be the 
pseudo Anosov mapping class which satisfies $\phi(\tilde q)=\Phi^\tau(\tilde q)$ 
where $\tau>0$ is the period of $\gamma$. 
The projective measured geodesic lamination 
$\zeta_1\in R$ is the attracting fixed point for the action of the
map  $\phi$ on the space of projective measured geodesic laminations of 
$S$. As $\phi$ preserves the component $\tilde {\cal C}$ of the
preimage of ${\cal C}$ containing $\tilde q$, 
by possibly replacing
$\phi$ by a large power we may assume that $\phi(D)$ is contained in 
the interior of $R$ and that the same holds true for $T\phi(D)$.
Recall to this end that $T\zeta_1$ is contained in the interior of $R$ 
by assumption. 
Similarly, we may assume that 
$\phi^{-1}T^{-1}K$ is contained in the interior of $K$.

To summarize, the mapping class $\psi=T\circ \phi$ maps
the compact ball $D$ into its interior. Therefore it has an 
attracting fixed point
in the interior of $D$. Similarly, it has a repelling fixed point in the
interior of $K$. As $\psi$ is pseudo-Anosov, it acts on the
space of projective measured geodesic laminations with 
north-south dynamics. As a consequence, the fixed points 
of $\psi$ in $D,K$, respectively, 
are the attracting and repelling measured geodesic laminations of 
$\psi$,  and the periodic orbit defined by 
$\psi$ is contained in ${\cal C}$. 

As this argument is valid for each $i\leq \ell$
and we can choose the pseudo-Anosov mapping class $\phi$
such that its attracting measured geodesic lamination
is arbitrarily close to $\mu$, 
we conclude that for each $i$ 
there is a pseudo-Anosov mapping class $\phi_i\in \Omega(\Gamma_0)$ such that
$T_i\phi\in \Omega(\Gamma_0)$ for all $i$.
Since $G$ is a group, we deduce that $T_i\in G$ for all $i$ and hence
$H<G$ as claimed.

The subgroup $\Psi(H)< 
Sp(2\ell,\mathbb{R})$ is an abelian group generated by $\ell$ 
transvections. The intersection of their fixed sets intersects 
${\cal Z}= p(T{\cal C}_+)\cap H^1(S,\mathbb{R})$
in a Lagrangian linear subspace $A_1$ of 
${\cal Z}$.

Recall that the bundle ${\cal Z}\vert U$ is equipped with a fixed
trivialization. Let again $\tilde U$ be a lift of $U$ to $\tilde {\cal C}$. 
For $\tilde q\in \tilde U$, the real part 
${\rm Re}(\tilde q)$ of $\tilde q$ is 
a harmonic one-form for the complex structure underlying $\tilde q$ 
which defines a cohomology class
$[{\rm Re}(\tilde q)]\in {\cal Z}\subset H^1(S,\mathbb{R})$. 
As $\tilde q$ varies in $\tilde U$ these
cohomology classes vary through an open subset of 
${\cal Z}\sim \mathbb{R}^{2\ell}$. 

Denote by $[c_i]\in H_1(S,\mathbb{Z})$ 
the homology class of the oriented weighted 
multicurve $c_i$ which determines the Dehn multi-twist $T_i$.
The evaluation of the real part of the marked 
abelian differential  $\tilde \omega$ on 
each of the classes $[c_i]$ is positive. As this is an open condition,
by decreasing the size of $U$ we may assume that
$\langle [{\rm Re}(\tilde q)],[c_i]\rangle\not=0$ for all $i$ 
and all $\tilde q\in \tilde U$ 
where $\langle , \rangle$ is the natural pairing
$H^1(S,\mathbb{R})\times H_1(S,\mathbb{R})\to \mathbb{R}$.  
Now periodic orbits for 
$\Phi^t$ are dense in ${\cal C}$ and therefore 
we can find a periodic point $z\in U$ with preimage
$\tilde z\in \tilde U$. By assumption on $\tilde U$ we have   
$\langle [{\rm Re}(\tilde z)],[c_i]\rangle\not=0$ for all $i$.

Let $\phi\in {\rm Mod}(S)$ be a
pseudo Anosov element whose cotangent line passes through 
$\tilde z$. There is a number $\kappa>1$ such that
$\phi^*{\rm Re}(\tilde z)=\kappa{\rm Re}(\tilde z)$, moreover
$\kappa$ is the Perron Frobenius eigenvalue
for the action of $\phi$ on $H_1(S,\mathbb{R})$. By invariance of
the natural pairing $\langle , \rangle$ 
under $\phi$, as $k\to \infty$ 
the homology classes $\phi^k([c_i])$ converge up to rescaling to 
a class $u\in H_1(S,\mathbb{R})$ whose contraction with the
intersection form defines $\pm[{\rm Re}(\tilde z)]$, viewed as a linear
functional on $H_1(S,\mathbb{R})$. By this we mean that 
$\iota (u,a)=\langle \pm[ {\rm Re}(\tilde z)], a\rangle$ for all 
$a\in H_1(S,\mathbb{R})$. 

As a consequence,
for sufficiently large $j$ and all $i,\ell$ we have 
$\iota([\phi^jc_i],[c_\ell])\not=0$. 
It now follows from the 
arguments in the beginning of this proof and equivariance 
that the group generated by $\Psi\Omega(\Gamma_0)$ contains
a subgroup generated by at least $\ell+1$ transvections with 
integral homology classes
which are independent over $\mathbb{R}$. These are the images under 
$\Psi$ of the Dehn multitwists $T_i=T_{c_i}$ $(i\leq \ell)$ and 
images under $\Psi$ of the Dehn multitwists
$\phi^jT_i\phi^{-j}=T_{\phi^j c_i}$. Moreover, $\iota([\phi^jc_i],[c_u])\not=0$ 
for all $i,u$.

Let $A_2\subset A_1$ be the common
fixed set in ${\cal Z}$ of the transvections which 
are the images of the multitwists $T_i,\phi^jT_u\phi^{-j}$.
Then $A_2$ is a linear subspace of $A_1$, and 
for large enough $j$ its
codimension in $A_1$ is $s\geq 1$. 
Let $i_1,\dots,i_s\subset \{1,\dots,\ell\}$ be such that the
homology classes 
$[c_i],[\phi^jc_{i_u}]\in H_1(S,\mathbb{Z})$ $(i\leq \ell,u\leq s)$ are 
independent over $\mathbb{R}$ and that the 
common fixed set 
in ${\cal Z}$ of the transvections defined by the corresponding 
Dehn multitwists is $A_2$.  
Using again the fact that the set of real
parts of differentials in $\tilde U$ 
define an open subset of 
$p(T{\cal C}_+)\cap H^1(S,\mathbb{R})$,  
we can find some $\bar z\in \tilde U$ and some 
$a\in A_2$ so that
$\langle [{\rm Re}(\bar z)],a\rangle>0$.
As  before, we may assume that $\bar z$ is the preimage of 
a periodic point. 
Argue now as in the previous paragraph and find a  
multitwist $\beta$ in the group generated by $\Omega(\Gamma_0)$ so that 
the common fixed set of the subgroup generated
by $\Psi(\beta)$ and $A_2$ has codimension at least one in $A_2$.

Repeat this construction. In at most $\ell$ steps we find 
integral homology classes 
$a_1,\dots,a_\ell,a_{\ell+1},\dots,a_{2\ell}\in H_1(S,\mathbb{Z})$
(where for $i\leq \ell$ the class $a_i$ is  
the class of the weighted multicurve 
which determines $T_i$)
with the following properties. 
\begin{enumerate}
\item Let $E\subset H_1(S,\mathbb{R})$ be the 
real vector space spanned by the classes $a_i$. 
Then the dimension of $E$ equals $2\ell$.
Each element $a\in E$ defines a linear functional on 
$H^1(S,\mathbb{R})$ by evaluation,  
and the restriction to ${\cal Z}$ 
of this linear subspace of $H^1(S,\mathbb{R})^*$
is non-degenerate. 
In particular, $E$ is a symplectic subspace of $H_1(S,\mathbb{R})$.
\item $\iota(a_{j},a_i)\not=0$ for all $i\leq \ell$, 
$j\geq \ell+1$.
\item   
For each $i$ the transvection $b\to b+\iota(b,a_i)a_i$ 
is contained in the group generated by $\Psi(\Omega(\Gamma_0))$. 
\end{enumerate}

By the choice of the homology classes $a_i$, 
the $(2\ell,2\ell)$-matrix $(\iota(a_i,a_j))$ 
whose $(i,j)$-entry is the intersection 
$\iota(a_i,a_j)$ is integral and of maximal rank. 
Choose a prime 
$p\geq 5$ so that each of the entries of $(\iota(a_i,a_j))$ 
is prime to $p$. 
All but finitely many primes will do. 
Then the reduction mod $p$ of the matrix 
$(\iota(a_i,a_j))$ is of maximal rank as well.
In particular, if $F_p$ denotes the field with $p$ elements then 
the reductions mod $p$ of the 
homology classes $a_i$ span a $2\ell$-dimensional
symplectic subspace $E_p$ of $H_1(S,F_p)$.

Let $L<{\rm Sp}(E)$ be the 
subgroup of the symplectic group of $E$ which 
is generated by the transvections with the elements $a_i$.
Its reduction $L_p$ mod $p$ acts on $E_p$ as a group of symplectic transformations. 
Lemma \ref{transvection} shows that 
$L_p=Sp(2\ell,F_p)$. 
Note that property (2) above guarantees that all conditions
in Lemma \ref{transvection} are fulfilled. 
Then $L$ is a Zariski dense subgroup of the group of symplectic 
automorphisms of $E$ \cite{Lu99}. 
By duality and the discussion in the proof of 
Proposition \ref{surjectivity}, this just implies that 
${\cal C}_+$ is locally Zariski dense.
\end{proof}




\begin{corollary}\label{localirreducible}
Let ${\cal C}$ be the hyperplane of area one differentials in 
an affine invariant manifold ${\cal C}_+$ of rank $\ell\geq 1$.
Then for every open subset $U$ of ${\cal C}$ there exists a periodic
orbit $\gamma$ for $\Phi^t$ through $U$
with the following properties.
\begin{enumerate}
\item The eigenvalues of the
matrix $A=\Psi(\Omega(\gamma))\vert {\cal Z}$ are real and pairwise distinct.
\item No product of two eigenvalues of $A$ is an eigenvalue.
\end{enumerate}
\end{corollary}
\begin{proof} By Proposition \ref{localzariski}, 
for every small open contractible subset
$U$ of ${\cal C}$, the image under $\Psi$ of the subsemigroup 
$\Psi(\Omega(\Gamma_0))$ defined as in Proposition \ref{grouplaw} by
the monodromy along 
periodic orbits through $U$ is Zariski dense in $Sp(2\ell,\mathbb{R})$.
The statement of the corollary is now an immediate consequence of the 
main result of \cite{Be97}.
\end{proof}

Recall that for an affine invariant manifold ${\cal C}$ of rank $\ell\leq g$ 
the projected
tangent space $p(T{\cal C})$ can be identified with 
the complexification of a symplectic subspace
$\mathbb{R}^{2\ell}$ of $\mathbb{R}^{2g}=H^1(S,\mathbb{R})$. 
The stabilizer of this
subspace is a subgroup $G$ of $Sp(2g,\mathbb{R})$ which is isomorphic to 
$Sp(2\ell,\mathbb{R})\times Sp(2(g-\ell),\mathbb{R})$. 

Let $\Pi:G\to G_1=Sp(2\ell,\mathbb{R})$ be the natural projection. 
Proposition \ref{localzariski} shows that $\Pi(G\cap Sp(2g,\mathbb{Z}))$
is a Zariski dense subgroup of $G_1$. 
The following consequence of this fact
was communicated to me by Yves Benoist. Although
it is not used for the proofs of the results stated in the
introduction, we include it here since it relates
affine invariant manifolds to proper subvarieties of 
${\cal A}_g$.

\begin{proposition}\label{latticeordense}
If $\Pi(G\cap Sp(2g,\mathbb{Z}))$ is Zariski dense in 
$Sp(2\ell,\mathbb{R})$ then 
either $\Pi(G\cap Sp(2g,\mathbb{Z}))$ 
is a lattice in $Sp(2\ell,\mathbb{R})$ or dense.
\end{proposition} 
\begin{proof} Using the above notations, 
write $G_{\mathbb{Z}}=Sp(2g,\mathbb{Z})\cap G$ and  
let $F<Sp(2\ell,\mathbb{R})$ be the Zariski closure
of $\Pi(G_{\mathbb{Z}})$.  

The group $F$ is defined over $\mathbb{Q}$. Namely, the set of polynomials
$P$ which vanish on $G_{\mathbb{Z}}$ is invariant under the Galois action.  
As a consequence, either $F_{\mathbb{Z}}=G_{\mathbb{Z}}$ is a lattice in 
$F$, or there is a nontrivial character on $F$ defined over $\mathbb{Q}$.

Assume for contradiction that 
there exists a nontrivial character on $F$ defined
over $\mathbb{Q}$. 
Define
\[F^0=\cap\{{\rm ker}(\chi)\mid 
\chi\text{ is a character on }F\text{ defined over }\mathbb{Q}\} .\] 
Then $F^0=F$ since up to multiplication with an integer, the evaluation 
on $G_{\mathbb{Z}}$ of 
a nontrivial character $\chi$ defined over $\mathbb{Q}$ has to be
integral in $\mathbb{C}^*$ which is impossible.  This contradiction 
yields that $F_{\mathbb{Z}}$ is a lattice in $F$.

The group $G_1=Sp(2\ell,\mathbb{R})$ is simple, 
and $\Delta=\Pi(G_{\mathbb{Z}})<G_1$
is Zariski dense.  Then $\Delta<G_1$ either is discrete or dense.
We have to show that if $\Delta$ is discrete then $\Delta$ is a lattice.

Thus assume that $\Delta$ is discrete. 
Consider the surjective homomorphism $\phi:F\to G_1$. 
Its kernel $K$ is a locally
compact group which intersects the lattice $F_{\mathbb{Z}}$ in a discrete subgroup.
The exact sequence
\[1\to K\to F\to G_1\to 1\]
induces a sequence 
\[K/K\cap F_{\mathbb{Z}}\to F/F_{\mathbb{Z}}\to G_1/\phi(F_{\mathbb{Z}}).\]
Now the Haar measure on $F$ can locally be represented as a product of 
the Haar measure on the orbits of $K$ and the quotient Haar measure.
If the volume of $G_1/\phi(F_{\mathbb{Z}})$ is infinite then this shows that 
the volume of $F/F_{\mathbb{Z}}$ has to be infinite.
But $F_{\mathbb{Z}}$ is a lattice in $F$ which is a contradiction.
\end{proof}

\section{Connections on the Hodge bundle}\label{connections}

The main goal of this section is to analyze differential geometric
properties of the projected tangent bundle of an affine invariant manifold
and use this is to establish a first rigidity result geared towards
Theorem \ref{finite} from the introduction. We always assume that $g\geq 3$.

Recall from Section \ref{local} that the Hodge bundle 
on the moduli space ${\cal M}_g$ of curves is 
the pull-back under the Torelli map of the Hermitean holomorphic
(orbifold) vector bundle 
${\cal V}\to {\cal A}_g$. 
The pull-back 
$\Pi^*{\cal H}\to {\cal H}_+$
to ${\cal H}_+$ of the Hodge bundle on ${\cal M}_g$ is
a holomorphic vector bundle on ${\cal H}_+$. 
The Hermitean structure on ${\cal H}$ defines 
a Hermitean structure on $\Pi^*{\cal H}$. 
The bundle $\Pi^*{\cal H}$ naturally splits as a direct
sum 
\[\Pi^*{\cal H}={\cal T}\oplus {\cal L}\]
of complex vector bundles. Here 
the fibre of ${\cal T}$ over a point $q$ is just the
$\mathbb{C}$-span of $q$, and the fibre of 
${\cal L}$ is the orthogonal complement
of ${\cal T}$ for the natural Hermitean metric, or, equivalently,
the orthogonal complement of ${\cal T}$ for the
symplectic form. 
The complex line bundle ${\cal T}$ is holomorphic. Via identification
of ${\cal L}$ with the quotient bundle $\Pi^*{\cal H}/{\cal T}$,
we may assume that ${\cal L}$ is holomorphic. 
Its complex dimension equals $g-1\geq 2$.

The group 
$GL^+(2,\mathbb{R})$ acts on $\Pi^*{\cal H}\to {\cal H}_+$ 
as a group of bundle automorphisms. For each point $q\in {\cal H}_+$,
the symplectic subspace of $H^1(S,\mathbb{R})$ spanned by the 
real and imaginary part of $q$ is locally constant along the
orbits of the $GL^+(2,\mathbb{R})$-action and hence
its symplectic complement is locally constant as well. 
Thus

\begin{lemma}\label{flat}
The restriction of the bundle ${\cal L}$ to the orbits of the 
$GL^+(2,\mathbb{R})$-action is flat. 
\end{lemma}

The space ${\cal H}_+$ is foliated into the orbits of the 
$GL^+(2,\mathbb{R})$-action. 
By Lemma \ref{flat}, the Gauss-Manin connection 
on the flat bundle ${\cal H}\to {\cal H}_+$ 
restricts to a flat leafwise 
connection $\nabla^{GM}$ on the bundle 
${\cal L}\to {\cal H}_+$. 
Here a leafwise connection
is a connection whose covariant derivative is only defined
for vectors tangent to the foliation.
As the Gauss Manin connection is real analytic 
in period coordinates, the $GL^+(2,\mathbb{R})$-action on ${\cal H}_+$ 
is real analytic and the splitting
$\Pi^*{\cal H}={\cal T}\oplus {\cal L}$
is real analytic, the leafwise connection is real analytic.
This means that it is defined by a connection matrix which 
is real analytic in real analytic coordinates. The leafwise 
connection preserves the symplectic structure of 
${\cal L}$, but there is no information on the
complex structure.

For each $k\leq g-2$, the leafwise
connection $\nabla^{GM}$ 
extends to a flat leafwise connection on the bundle
$\wedge^{2k}_{\mathbb{R}}{\cal L}$ whose fibre at $q$ 
is the $2k$-th exterior
power of the fibre of ${\cal L}$ at $q$, viewed as a real vector
space.

The Hermitean holomorphic vector bundle $\Pi^*{\cal H}$  
admits a unique \emph{Chern connection}
$\nabla$ (see e.g. \cite{GH78}). 
The Chern connection defines
parallel transport of the fibres of ${\cal H}$ along smooth 
curves in ${\cal H}_+$.
As the Chern connection is Hermitean, parallel transport
preserves the metric. 
Viewing again $\Pi^*{\cal H}$ as the real vector bundle 
$H^1(S,\mathbb{R})$ equipped with the complex structure $J$, the complex
structure $J$ is parallel for $\nabla$. In particular,
parallel transport preserves $J$ and the 
Hermitean metric.
Since the $GL^+(2,\mathbb{R})$-orbits on ${\cal H}_+$ are holomorphic
suborbifolds of ${\cal H}_+$ and the 
restriction of the bundle ${\cal T}$ to each
leaf of the foliation is just the tangent bundle of the 
quotient 
$GL^+(2,\mathbb{R})/(\mathbb{R}^+\times S^1)={\bf H}^2$, 
by naturality the restriction of the Chern connection to the leaves
of the foliation of ${\cal H}_+$ into the orbits of the 
$GL^+(2,\mathbb{R})$-action preserves the decomposition
$\Pi^*{\cal H}={\cal T}\oplus {\cal L}$.

For every $k\leq g-2$, the complex structure $J$ on ${\cal L}$ 
extends to an 
automorphism of the real tensor
bundle $\wedge^{2k}_{\mathbb{R}}{\cal L}$. The restriction of 
the connection
$\nabla$ to the orbits of the $GL^+(2,\mathbb{R})$-action
extends to a connection on $\wedge^{2k}_{\mathbb{R}}{\cal L}$ 
which commutes with this automorphism. 
%

The Hermitean metric which determines the Chern connection
is defined by the polarization and the complex structure.
These data are real analytic in period coordinates
(recall that the Torelli map is holomorphic) and consequently
the connection matrix  for the Chern connection in period coordinates
is real analytic (see \cite{GH78}).



To summarize, for every $k\leq g-2$, both 
the Chern connection and the Gauss Manin connection 
restrict to leafwise connections
of the restriction of the bundle 
$\wedge^{2k}_{\mathbb{R}}{\cal L}\to {\cal H}_+$ to 
the orbits of the $GL^+(2,\mathbb{R})$-action, 
and these leafwise connections depend in a real analytic
fashion on period coordinates.

The tangent bundle ${\cal F}$ of the foliation of 
${\cal H}_+$ into the orbits of the $GL^+(2,\mathbb{R})$-action is a
real analytic subbundle of $T{\cal H}_+$.
By the above discussion, 
$\nabla-\nabla^{GM}$ defines 
a real analytic tensor field
\[\Xi\in \Omega({\cal F}^*\otimes 
{\cal L}^*\otimes{\cal L})\] 
where we denote by $\Omega({\cal F}^*\otimes 
{\cal L}^*\otimes{\cal L})$
the vector space of real analytic
sections of the real analytic vector bundle
${\cal F}^*\otimes {\cal L}^*\otimes {\cal L}$. 
For every $k\leq g-2$ the tensor field 
\[\Xi^k\in \Omega({\cal F}^*\otimes  \wedge^{2k}_{\mathbb{R}}{\cal L}^*
\otimes \wedge^{2k}_{\mathbb{R}}{\cal L})\]
defined as the 
action of $\nabla-\nabla^{GM}$ on 
$\wedge^{2k}_{\mathbb{R}}{\cal L}$ is real analytic as well. 
If ${\cal C}_+\subset {\cal H}_+$ is an affine invariant manifold
of rank $2\leq \ell\leq g-1$, 
with absolute holomorphic tangent bundle ${\cal Z}$, then
the restriction of the tensor field
$\Xi^{\ell-1}$ to ${\cal C}_+$ preserves  the $J$-invariant 
section of $\wedge^{2\ell-2}_{\mathbb{R}}{\cal L}$ 
which is defined by $p(T{\cal C}_+)\cap H^1(S,\mathbb{R})$. 
This section associates to a point $q$ the exterior product
of a normalized oriented basis of the  
(real) vector space $p(T{\cal C}_+)\cap {\cal L}$. 

The next proposition is the key step towards Theorem \ref{finite}.

\begin{proposition}\label{realanalytic}
Let ${\cal C}_+\subset {\cal Q}_+$ be an affine
invariant manifold of rank $\ell\geq 3$. 
Then one of the following two possibilities holds true.
\begin{enumerate}
\item 
There are finitely
many proper affine invariant submanifolds
of ${\cal C}_+$  
which contain every affine invariant submanifold of ${\cal C}_+$
of rank $2\leq k\leq \ell-1$.
\item Up to passing to a finite cover, 
the restriction of the bundle ${\cal L}\cap {\cal Z}$ to an open dense
$GL^+(2,\mathbb{R})$-invariant subset of ${\cal C}_+$ 
admits a non-trivial 
$GL^+(2,\mathbb{R})$-invariant real analytic splitting
${\cal L}\cap {\cal Z}={\cal E}_1\oplus {\cal E}_2$ into two complex subbundles
whose restrictions to each orbit of the $GL^+(2,\mathbb{R})$-action are flat.
\end{enumerate}
\end{proposition}
\begin{proof} 
Let ${\cal Q}_+\subset {\cal H}_+$ be a component of a stratum and let 
${\cal C}_+\subset {\cal Q}_+$ be an affine invariant 
manifold of rank $\ell\geq 3$, with  absolute holomorphic tangent bundle  
${\cal Z}\to {\cal C}_+$.
As before, there is a splitting 
\[{\cal Z}={\cal T}\oplus ({\cal L}\cap {\cal Z}).\]
The bundle 
\[{\cal W}={\cal L}\cap {\cal Z}\to {\cal C}_+\] 
is holomorphic.  It also can be viewed 
as a real analytic real vector bundle with a real analytic complex structure
$J$ (which is just a real analytic
section of the real analytic endomorphism bundle of 
${\cal W}$ with $J^2=-{\rm Id}$).

For $1 \leq k\leq \ell-2$ denote 
by ${\rm Gr}(2k)\to {\cal C}_+$ the fibre bundle whose
fibre over $q$ is the Grassmannian of oriented
$2k$-dimensional real subspaces of ${\cal W}_q$. 
This is a real analytic fibre bundle with compact fibre.
It contains a real analytic subbundle 
${\cal P}(k)\to {\cal C}_+$  
whose fibre over $q$ is the 
Grassmannian of \emph{complex} $k$-dimensional
subspaces of ${\cal W}_q$ (for the complex structure $J$).

The Hermitean metric on ${\cal W}$ naturally 
extends to a real analytic Riemannian metric on 
$\wedge_{\mathbb{R}}^{2k}{\cal W}$.  
The bundle ${\rm Gr}(2k)$ can be identified with the
set of pure vectors in the sphere subbundle 
of $\wedge^{2k}_{\mathbb{R}}{\cal W}$ for this metric.
Namely, an oriented $2k$-dimensional
real linear subspace $E$ of ${\cal W}_q$ defines
uniquely a pure vector in $\wedge^{2k}_{\mathbb{R}}{\cal W}_q$ of norm one
which is just the exterior product of an orthonormal basis
of $E$ with respect to the inner product on ${\cal W}_q$.
The points in ${\cal P}(k)$ correspond precisely to those
pure vectors which are invariant under the 
extension of $J$ to an automorphism of $\wedge^{2k}_{\mathbb{R}}{\cal W}$. 

From now on, we work on the real analytic hyperplane
${\cal C}\subset {\cal C}_+$ of abelian differentials in 
${\cal C}$ of area one, and we replace the action of $GL^+(2,\mathbb{R})$
by the action of $SL(2,\mathbb{R})$. 
The tangent bundle ${\cal F}$ of the orbits of the
$SL(2,\mathbb{R})$-action is naturally trivialized
by the generator
$X$ of the Teichm\"uller flow, the generator $Y$ of the 
horocycle flow, and the generator $Z$ of the circle group of rotations.
Let $B^k$ (or $C^k,D^k$) be the contraction of the
tensor field $\Xi^k$ with the vector fields $X,Y,Z$. 
Since these vector fields are real analytic, 
$B^k$ (or $C^k,D^k$) is a real analytic section of the endomorphism bundle 
${\rm End}(\wedge^{2k}_{\mathbb{R}}({\cal W}))$ of 
$\wedge^{2k}_{\mathbb{R}}({\cal W})$.  

For $1\leq k\leq \ell-2$ and $q\in {\cal C}$ let 
\[\rho^{k}(q,0)\subset {\cal P}(k)_q\] be
the set of all $k$-dimensional 
complex linear subspaces $L$ of ${\cal W}_q$ with 
$B^{k}L=0=C^kL=D^kL$ (here we view as before a $k$-dimensional 
complex subspace of ${\cal W}_q$ as a pure $J$-invariant vector in 
$\wedge^{2k}_{\mathbb{R}}{\cal W}_q$). 
Since the contractions $B^k,C^k,D^k$ of the 
tensor field $\Xi^k$ are linear, the set $\rho^k(q,0)$ can be
identified with the
set of all $J$-invariant pure vectors which are contained
in some (perhaps trivial) linear subspace of $\wedge_{\mathbb{R}}^k({\cal W}_q)$.

%

Define a \emph{real analytic subset} of
${\cal P}(k)$ 
to be  
the intersection of the 
zero sets of a finite or countable 
number of real analytic functions on 
${\cal P}(k)$. 
Recall that
this is well defined since 
${\cal P}(k)$ has a natural real analytic structure.
We allow such functions to be constant zero, i.e. we do not exclude that
such a set coincides with
${\cal P}(k)$. 
The real analytic set is \emph{proper}
if it does not coincide 
with ${\cal P}(k)$. 
Then there is at least
one defining function which is not identically zero and hence
the set is closed and nowhere dense.

Since the tensor fields $\Xi^k$ and the vector fields
$X,Y,Z$ are real analytic, 
$\cup_q \rho^k(q,0)$ is a 
real analytic subset of ${\cal P}(k)$, defined as the common
zero set of three real analytic functions. 


For $t\in \mathbb{R}$ define 
\[\rho_1^k(q,t)\subset {\rm Gr}(2k)_q\] 
to be the preimage 
of $\rho^k(u_tq,0)$ under
parallel transport for the Gauss Manin connection along the flow line
$s\to u_sq$ $(s\in [0,t])$ of the horocycle flow. 
By the previous paragraph and the
fact that parallel transport is real analytic, 
$\cup_q\rho_1^k(q,t)$ is a real analytic subset of 
${\rm Gr}(2k)_q$ and hence the same holds true for   
\[\xi_1^k=\cap_{t\in \mathbb{R}}\cup_q\rho_1^k(q,t)\subset {\cal P}(k)\]
(take the intersections for all $t\in \mathbb{Q}$).

By construction, the set $\xi_1^k$ is invariant under the extension 
of the horocycle flow 
by parallel transport with respect to the 
Gauss Manin connection of the fibres of the bundle 
${\cal P}(k)\to {\cal C}$. It also is invariant
under parallel transport with respect to the Chern connection:
Namely, by definition, if $Z\in \xi_1^k(q)$ and 
if $Z(t)$ is the parallel transport of $Z=Z(0)$ for the Gauss Manin
connection along the 
orbit of the horocycle flow through $q$, then the covariant derivative
of the section $t\to Z(t)$ for the Chern connection vanishes.

Similarly, for $t\in \mathbb{R}$ 
define 
\[\rho_2^k(q,t)\subset {\rm Gr}(2k)_q\]
to be the preimage of $\xi_1^k(\Phi^t q)$ under parallel transport
for the Gauss Manin connection along the flow line $s\to \Phi^s q$
$(s\in [0,t])$ of the Teichm\"uller flow and let 
\[\xi_2^k=\cap_{t\in \mathbb{R}}\cup_q \rho_2^k(q,t)\subset {\cal P}(k).\]
Then $\xi^k_2$ is invariant under the extension of the Teichm\"uller flow 
by parallel transport both for the Gauss Manin connection and the 
Chern connection. 
As the Teichm\"uller flow maps orbits of the horocycle
flow to orbits of the horocycle flow with the parametrization
multiplied by a constant, using Lemma \ref{flat} it follows that 
the set $\xi_2^k$ consists
of points whose parallel transports along the orbits of the
upper triangular subgroup of $SL(2,\mathbb{R})$ for both the
Chern connection and the Gauss Manin connection coincide.

Repeat this construction with the circle group of rotations to find a 
set 
\[\xi^k\subset {\cal P}(k).\]
Then $\xi^k$ is a real analytic subset of ${\cal P}(k)$ which is
invariant under the action of $SL(2,\mathbb{R})$ defined by 
parallel transport for the Gauss Manin connection.

If ${\cal U}\subset {\cal C}$ is a proper affine invariant manifold
of rank $2\leq k+1<\ell$ 
then it follows from the discussion preceding this
proof (see \cite{F16}) that for every $q\in {\cal U}$
the projected tangent space $p(T{\cal U})$ 
defines a point in $\xi^k(q)\subset {\cal P}(k)_q$.
In particular, we have $\xi^k\not=\emptyset$.

Let $\pi:{\cal P}(k)\to {\cal C}$ 
be the natural projection and
let 
\[{\cal A}=\pi(\xi^k).\] 
The fibres of $\pi$ are compact and hence $\pi$ is closed.
Therefore  
${\cal A}$ is 
a closed $SL(2,\mathbb{R})$-invariant 
subset of ${\cal C}$ which contains all affine invariant submanifolds of 
${\cal C}$ of rank $k+1$. 
Our goal is to show that either ${\cal A}$ is nowhere dense in 
${\cal C}$, or the second property in the statement of the proposition 
is fullfilled.

To this end 
assume that ${\cal A}$ contains an
open subset of ${\cal C}$. By invariance and 
ergodicity, ${\cal A}$ contains an open dense invariant set.
On the other hand, ${\cal A}$ is closed and hence
we have ${\cal A}={\cal C}$. This is equivalent to stating
that $\xi^k(q)\subset {\cal P}(k)_q$ is a non-empty compact
set for all $q\in {\cal C}$. In other words,
for every $q\in {\cal C}$ there is a line in 
$\wedge^{2k}_{\mathbb{R}}{\cal W}$ spanned by
a pure vector $Z$ 
which is an eigenvector for the extension of 
the complex structure $J$ and which is contained in the kernel of 
$B^k,C^k,D^k$. Moreover, 
the same holds true for the
parallel transport with respect to the Gauss Manin connection 
of this eigenline along the orbits of the $SL(2,\mathbb{R})$-action
(by Lemma \ref{flat}, this makes sense).

With respect to a real analytic local trivialization of the bundle
${\cal P}(k)$ over an open set $V\subset {\cal C}$, the set
$\xi^k$ is of the form $(q,\xi^k(q))$ where $\xi^k(q)$ is 
a real analytic subset of the compact 
projective variety of $k$-dimensional
complex linear subspaces of $\mathbb{C}^{\ell-1}$ depending in
a real analytic fashion on $q$. 
More precisely, $\xi^k(q)$ can be identfied with the
space of all $J$-invariant
pure vectors which are contained in some linear subspace $R_q$ of 
$\wedge_{\mathbb{R}}^{2k}({\cal W}_q)$.  
As a consequence, 
there is an open subset $V$ of ${\cal C}$ so that 
the restriction of $\xi^k$ to $V$ is a fibre bundle over $V$
(take an open set where the dimension of $R_q$ is minimal).
By invariance and ergodicity, 
there is an open dense $SL(2,\mathbb{R})$-invariant connected subset
$V$ of ${\cal C}$ such that $\xi^k\vert V$ is a fibre bundle.

We claim that for $q\in V$, the set
$\xi^k(q)$ consists of finitely many points. 
To this end choose a periodic orbit $\gamma\subset V$ for the
Teichm\"uller flow 
so that for $q\in \gamma$, 
the restriction $A$ to $\mathbb{R}^{2\ell}={\cal Z}_q$ of the transformation
$\Psi(\Omega(\gamma))\in Sp(2\ell,\mathbb{R})$ 
has $2\ell$ distinct real eigenvalues. Such an orbit exists by Corollary \ref{localirreducible}.
Note that $A$ is the return map 
for parallel transport of ${\cal Z}$ along $\gamma$ 
with respect to the Gauss Manin
connection. The map $A$ preserves the decomposition 
${\cal Z}_q={\cal T}_q\oplus {\cal W}_q$. 

We are looking for $k$-dimensional complex 
linear subspaces of ${\cal W}_q$ whose
images under iteration of $A$ are complex. 
Recall that the complex structure and the symplectic structure
define an inner product $\langle, \rangle$ on $\wedge^{2k}{\cal W}_q$. 
Thus the set $\xi^k(q)$ consists of pure vectors 
$Y\in \wedge^{2k}_{\mathbb{R}}{\cal W}_q$ whose norms are
preserved by the symplectic linear map $A$. 
As the eigenvalues of $A$ are all real
and positive, of multiplicity one, 
the eigenvalues for the action of $A$ on $\wedge^{2k}{\cal W}_q$ are
all real and positive. More
precisely, the eigenvalues for this action are precisely the
products of $2k$ eigenvalues of the linear map $A$.
In particular, these eigenvalues are all real and positive, and 
the action of $A$ on $\wedge^{2k}_{\mathbb{R}}{\cal W}_q$ is diagonalizable
over $\mathbb{R}$. As a consequence, a vector 
$Y\in \wedge^{2k}_{\mathbb{R}}{\cal W}_q$ whose norm is preserved by
the action of $A$ is an eigenvector for the eigenvalue one.

We showed so far that a pure vector $Y\in \xi^k(q)$ corresponds to
a fixed point for
the action of $A$ on the 
set ${\cal P}(k)_q$ of all $k$-dimensional
complex subspaces of ${\cal W}_q$, viewed as 
a compact subset of the Grassmannian ${\rm Gr}(2k)_q$
of $2k$-dimensional subspaces of ${\cal W}_q$. 
The fixed points 
for the action of $A$ on ${\rm Gr}(2k)_q$ 
are precisely the $k$-dimensional oriented linear subspaces
which are direct sums of eigenspaces of $A$. Thus
the number of such subspaces is finite and hence
$\xi^k(q)$ is a finite set.

By the choice of $V$, the cardinality of 
the set $\xi^k(q)$ is locally constant 
and hence constant on $V$ since $V$ is connected. 
As the dependence of $\xi^k(q)$ on $q\in V$ is real analytic, 
any choice of a point in 
$\xi^k(q)$ defines locally near $q$ an analytic section of
${\cal P}(k)$ and hence a 
real analytic $J$-invariant subbundle of 
${\cal W}$.  If this local 
section is invariant under parallel transport 
along the orbits of the $SL(2,\mathbb{R})$-action
(this is equivalent to triviality of the 
monodromy),  then 
it defines a real analytic $J$-invariant 
subbundle of ${\cal W}\vert V$.
Otherwise as $\xi^k(q)\subset {\cal P}(k)_q$ 
consists of finitely many points and 
is invariant under parallel transport along the orbits
of the $SL(2,\mathbb{R})$-action, 
this parallel transport acts as a finite
group of permutations on the finite set $\xi^k(q)$. 
Thus we can pass to a finite
cover of ${\cal C}$ 
so that on the covering space, using the same notation, the induced subbundle
of ${\cal W}$ 
is invariant under parallel transport. 

In other words, up to passing
to a finite cover,  
$\xi^k$ defines a real analytic complex $SL(2,\mathbb{R})$-invariant 
$k$-dimensional vector bundle over the open dense invariant subset 
$V$ of ${\cal C}$. By invariance of both the 
complex and the symplectic structure under parallel transport 
along the orbits of the $SL(2,\mathbb{R})$-action, $\xi^k$ then defines 
a splitting of the bundle ${\cal L}\vert V$ as predicted 
in the second part of the proposition.
\end{proof}

\begin{remark}\label{nocontradiction}
By Proposition \ref{localzariski} (see also \cite{W14}), 
a real analytic splitting of the bundle ${\cal L}$ as stated in the second
part of Proposition \ref{realanalytic} can not be flat, i.e. invariant
under the Gauss Manin connection. However, 
Proposition \ref{localzariski} does not rule out a real analytic 
splitting which is not flat. 
\end{remark}

\section{Invariant splittings of the lifted Hodge bundle}\label{uniqueness}

In this section we use information on the moduli space of 
principally polarized abelian varieties 
to rule out the second case in Proposition \ref{realanalytic}.
We continue to use all assumptions and notations from 
Section \ref{connections}.


Recall the splitting ${\cal H}={\cal T}\oplus {\cal L}$ of the lifted
Hodge bundle. The leaves of the absolute period foliation
consist of differentials whose real and imaginary part define fixed
classes in $H^1(S,\mathbb{R})$. Thus 
by the fact that the Hodge bundle is flat, the restriction of the
bundle ${\cal T}$ to each leaf of the absolute period foliation is
flat and hence the same holds true for its symplectic complement ${\cal L}$.  
This is equivalent to stating that parallel transport of 
the Hodge bundle with respect to the Gauss Manin connection 
along curves contained in the absolute
period foliation preserves the subbundle ${\cal L}$. 
Equivalently, the restriction of the Gauss Manin connection
to each such leaf defines a leafwise flat
connection on ${\cal L}$. 

Let ${\cal C}$ be an affine invariant manifold with absolute holomorphic 
tangent bundle ${\cal Z}$. 
Assume that there is an open dense $SL(2,\mathbb{R})$-invariant 
subset $V$ of ${\cal C}$ and there 
is an $SL(2,\mathbb{R})$-invariant real analytic splitting 
${\cal L}\cap {\cal Z}\vert V={\cal E}_1\oplus {\cal E}_2$ 
into complex orthogonal subbundles as
in the second part of Proposition \ref{realanalytic}. The restriction of 
the splitting to an orbit of the $SL(2,\mathbb{R})$-action is 
invariant under the Gauss Manin connection. 
Equivalently, the restrictions of the bundles
${\cal E}_i$ to the orbits of the $SL(2,\mathbb{R})$-action 
are parallel. 

Recall from Section \ref{localaffine} that if 
${\rm dim}({\cal C}_+)>2{\rm rk}({\cal C}_+)$ then the 
absolute period foliation ${\cal A\cal P}({\cal C})$ 
of ${\cal C}$ is defined.

\begin{lemma}\label{flatone} 
The restriction of the bundle ${\cal E}_i\to V$ 
to a leaf of ${\cal A\cal P}({\cal C})$ is 
invariant under the Gauss Manin connection. 
\end{lemma}
\begin{proof} We may assume that the dimension of ${\cal A\cal P}({\cal C})$ is positive.
Furthermore, by passing to a finite cover we assume that the zeros of the 
differentials in 
${\cal C}$ are numbered.

The splitting ${\cal Z}\vert V={\cal T}\oplus {\cal E}_1\oplus {\cal E}_2$ 
can be used 
to project the Gauss Manin connection $\nabla^{GM}$ on $p(T{\cal C})={\cal Z}$ 
to a real analytic connection $\hat \nabla$ along the leaves of the absolute
period foliation which preserves this decomposition. 
More precisely, given a local vector field $X\subset T{\cal A\cal P}({\cal C})$ and
a section $Y$ of ${\cal E}_i$, define
\[\hat \nabla_XY=P_i(\nabla^{GM}_XY)\] where 
\[P_i:{\cal E}_1\oplus {\cal E}_2\to {\cal E}_i\] 
is the natural projection. Recall that this makes sense since
the Gauss Manin connection restricted to a leaf of 
${\cal A\cal P}({\cal C})$ preserves the bundle ${\cal L}\cap {\cal Z}$.   

Let $\mathfrak{a}\in \mathbb{C}^k$ be such that the vector field 
$X_{\mathfrak{a}}$ is tangent to ${\cal C}$. The existence of such a
vector $\mathfrak{a}$ was shown in Lemma \ref{affine}. 
For every 
$q\in V\subset {\cal C}$ and 
every 
$Y\in {\cal E}_1(q)$ we 
can extend $Y$ by parallel transport for $\hat \nabla$ along the flow line
of the flow $\Lambda_{\mathfrak{a}}^t$ generated by
$X_{\mathfrak{a}}$. Let us denote this extension by $\hat Y$; then 
\[\beta(X_{\mathfrak{a}},Y)=\frac{\nabla^{GM}}{dt}\hat Y(\Lambda_{\mathfrak{a}}^t(q))\vert_{t=0}
\in {\cal E}_2(q)\]
only depends on $X_{\mathfrak{a}}$ and $Y$, moreover this dependence is linear
in both variables. 
In this way we obtain a real analytic tensor field 
\[\beta\in \Omega(T{\cal A\cal P}({\cal C})^*
\otimes {\cal E}_1^*\otimes {\cal E}_2).\]
Here as before, $\Omega(T{\cal A\cal P}({\cal C})^*\otimes 
{\cal E}^*_1\otimes {\cal E}_2)$ is the vector space of real analytic sections
of the bundle $T{\cal A\cal P}({\cal C})^*\otimes 
{\cal E}_1^*\otimes {\cal E}_2$.
The splitting ${\cal L}\cap {\cal Z}={\cal E}_1\oplus {\cal E}_2$ is parallel
along the leaves of the absolute period foliation 
if and only if $\beta$ vanishes
identically. 

The Teichm\"uller flow $\Phi^t$ acts on the bundle ${\cal L}$ by parallel transport 
with respect to the Gauss Manin connection, and this action preserves
the bundles ${\cal E}_i$ $(i=1,2)$ and the absolute period foliation of 
${\cal C}$. 
Thus by definition, the tensor field $\beta$ is
equivariant under the action of $\Phi^t$.

Assume to the contrary that $\beta$ does not vanish identically.
As $\beta$ is real analytic and bilinear,
there is then an open subset $U$ of $V\subset {\cal C}$ and either a real 
or a purely imaginary vector $\mathfrak{a}\in 
\mathbb{C}^k$ such that $X_{\mathfrak{a}}$ is tangent to 
${\cal C}$ and that moreover 
the contraction of 
$\beta$ with $X_{\mathfrak{a}}$ does not vanish on $U$.

Assume that $\mathfrak{a}\in \mathbb{R}^k$ 
is real, the case of a purely
imaginary vector is treated in the same way; then
$d\Phi^tX_{\mathfrak{a}}=e^tX_{\mathfrak{a}}$ by
Lemma \ref{invariance3}. 
Let now $\gamma\subset {\cal C}$ be a periodic orbit
with the properties stated in Corollary \ref{localirreducible} 
which passes through $U$. 
Let $q\in \gamma\cap U$. 
The eigenvalues of the matrix 
$A=\Psi(\Omega(\gamma))\vert {\cal Z}_q$ 
(where we identify ${\cal Z}_q$ with the symplectic subspace
of $H^1(S,\mathbb{R})$ it defines) 
are 
real, positive and of multiplicity one. 
The largest eigenvalue of $A$ equals 
$e^{\ell(\gamma)}$ where
$\ell(\gamma)$ is the length of the orbit $\gamma$.

Write ${\cal W}={\cal Z}\cap {\cal L}$; then 
${\cal W}_q$ is a sum of eigenspaces for $A$ 
(viewed as a transformation of ${\cal Z}_q$) for eigenvalues
strictly smaller than $e^{\ell(\gamma)}$. Furthermore,
the decomposition
${\cal W}_q={\cal E}_1(q)\oplus {\cal E}_2(q)$ $(i=1,2)$ 
is invariant under $A$, and ${\cal E}_i(q)$ is a direct
sum of eigenspaces for $A$.

Denote by $\vert \vert^{GM}$ parallel transport for the Gauss Manin connection.
By equivariance of the tensor field $\beta$ under the
flow $\Phi^t$, for $Y\in {\cal E}_1(q)$ we have
\[\beta(d\Phi^{\ell(\gamma)}X_{\mathfrak{a}},\vert \vert^{GM}_{\gamma}Y)=
\vert\vert^{GM}_\gamma\beta(X_{\mathfrak{a}},Y)
\in {\cal E}_2(q).\]

Recall that ${\cal E}_i(q)$ is a direct sum of eigenspaces for $A$.
By the above discussion,
if $Y\in {\cal E}_1(p)$ is an eigenvector of $A$ for the eigenvalue 
$a>0$, then 
\[\beta(d\Phi^{\ell(\gamma)}X_{\mathfrak{a}},AY)=
e^{\ell(\gamma)}a\beta(X_{\mathfrak{a}},Y)=A\beta(X_{\mathfrak{a}},Y)
\in {\cal E}_2(p).\]
In other words, the contraction $B$ of $\beta$ with $X_{\mathfrak{a}}$ 
maps an eigenspace of ${\cal E}_1(p)$ for the eigenvalue $a$ to an eigenspace
for the eigenvalue $e^{\ell(\gamma)}a$.

But $e^{\ell(\gamma)}$ is an eigenvalue of $A$ and by the choice of 
$\gamma$, no product of two eigenvalues of $A$ is an eigenvalue. Therefore
$B$ has to be the zero map. 
This contradicts the assumption that the contraction of 
$\beta$ with $X_{\mathfrak{a}}$ does not vanish at $q$ and hence
that the map $B$ is  
non-trivial.

As a consequence, the parallel transport for $\hat \nabla$
of a vector $Y\in {\cal E}_1$  
along a path which is entirely contained
in a leaf of the absolute period foliation of $V\subset {\cal C}$ 
coincides with parallel transport with respect to the
Gauss Manin connection. Equivalently, the restriction of 
the Gauss Manin connection to a leaf of the absolute
period foliation preserves the splitting 
${\cal W}={\cal E}_1\oplus {\cal E}_2$.
This is what we wanted to show.
\end{proof}

\begin{remark}\label{sufficient?}
It is an interesting question whether it is possible to 
deduce from Lemma \ref{flatone}, Proposition
\ref{latticeordense} and Moore's theorem
for the action of $SL(2,\mathbb{R})$ on 
$Sp(2g,\mathbb{Z})\backslash Sp(2g,\mathbb{R})$ that 
a splitting
as in the second part of Proposition \ref{localzariski} does not
exist.  The main difficulty is that we do not know whether 
there is  a leaf of the foliation ${\cal F}$ of 
the bundle ${\cal S}$ as described in the appendix 
which intersects the image of the period map in more than one component.
\end{remark}

We saw so far that an invariant splitting of the Hodge bundle
over an affine invariant manifold ${\cal C}$ 
as predicted by the second part of Proposition \ref{realanalytic}
has to be flat along the leaves of the absolute period foliation.
If this foliation is non-trivial and if we knew that it has dense 
leaves (this is an open question, see however \cite{H15}), 
we could try to show that such a splitting has to be flat globally.
As such tools are not available, we use the curvature of the 
projection of the Gauss Manin connection to the invariant 
subbundle to arrive at a contradiction similar to the 
reasoning in the proof of Lemma \ref{flatone}.
To compute this curvature we use the fact that
the Gauss Manin connection is the pull-back of the standard flat connection
of the vector bundle ${\cal V}\to {\cal A}_g$ under the Torelli map.
As detailed in the appendix, this connection can be controlled with
standard methods from differential geometry on locally symmetric spaces.

\begin{proposition}\label{nosplit}
Let ${\cal Z}$ be the absolute holomorphic tangent bundle 
of an affine invariant manifold ${\cal C}$ of rank at least three.
Then there is no open dense 
$SL(2,\mathbb{R})$-invariant subset $V$ of ${\cal C}$ such that 
the bundle ${\cal W}\vert V={\cal Z}\cap {\cal L}\vert V$ admits an
$SL(2,\mathbb{R})$-invariant real analytic direct decomposition
into a sum of two complex vector bundles whose restrictions to
each $SL(2,\mathbb{R})$-orbit are flat. 
\end{proposition}
\begin{proof}
We argue by contradiction and we assume that an open dense
invariant set $V$ and such a splitting
${\cal W}\vert V={\cal E}_1\oplus {\cal E}_2$ exists.
By Lemma \ref{flatone}, we know that this splitting is 
invariant under the restriction of the Gauss Manin connection to 
the leaves of the absolute period foliation of ${\cal C}$. 
Furthermore, it naturally induces an invariant splitting of the 
bundle ${\cal W}\oplus \overline{\cal W}\subset p(T{\cal C})$
into two subbundles which are complex for the flat complex structure on 
$H^1(S,\mathbb{C})=H^1(S,\mathbb{R})\otimes \mathbb{C}$. 

Let ${\cal T}(S)$ be the Teichm\"uller space of the surface $S$ and let
${\cal I}_g<{\rm Mod}(S)$ be the Torelli group. 
The group ${\cal I}_g$ acts properly and freely 
from the right on ${\cal T}(S)$, 
with quotient the \emph{Torelli space} ${\cal T}(S)/{\cal I}_g$.
The \emph{period map} $F$ maps the bundle ${\cal D}$ 
of area one abelian differential
over ${\cal T}(S)/{\cal I}_g$ into the sphere subbundle ${\cal S}$ of 
the tautological vector bundle ${\cal V}$ over the Siegel upper half-space
$\mathfrak{D}_g$ (see the beginning of Section \ref{local} and the  
appendix for the notations). 
The map $F:{\cal D}\to {\cal S}$ is equivariant with respect to 
the standard action of $SL(2,\mathbb{R})$ on ${\cal D}$ and the 
right action of $SL(2,\mathbb{R})$ on ${\cal S}$
(which commutes with the left action of $Sp(2g,\mathbb{R})$).
We refer to the appendix for more details on these facts. 

By Lemma \ref{invariance4}, 
the composition of the map $F$ with the equivariant projection 
\[\Pi:{\cal S}=Sp(2g,\mathbb{R})\times_{U(g)}S^{2g-1}\to \Omega=
Sp(2g,\mathbb{R})/Sp(2g-2,\mathbb{R})\] 
is equivariant for the standard $SL(2,\mathbb{R})$-actions
as well. 

Apply the map $\Pi\circ F$ 
to a component $\hat {\cal C}$ of the preimage of ${\cal C}$ in 
${\cal D}$. 
More precisely, the absolute holomorphic tangent bundle of 
${\cal C}$ determines a symplectic subspace 
$\mathbb{R}^{2\ell}$ of $(\mathbb{R}^{2g},\omega)=H^1(S,\mathbb{R})$. 
We denote by $\mathbb{C}^{2\ell}$ its complexification. 
The composition of the map $\Pi\circ F$ with symplectic 
orthogonal projection 
then defines a map 
\[\Xi:\hat{\cal C}\to 
\Omega_\ell=\{x+iy\in \mathbb{C}^{2\ell}\mid \omega(x,y)=1\}.\]

By naturality of this construction, the 
Gauss Manin connection 
on the bundle $p(T{\cal C})$ with fibre ${\cal Z}\oplus
\overline{\cal Z}$ is just the pull-back via $\Xi$ of the
natural flat connection on $T{\cal O}$. 
Using the notations from the appendix, 
the leafwise connection $\nabla^{GM}$ on the bundle ${\cal L}$ is the
pull-back of the connection $\nabla^{\cal R}$ on the 
symplectic complement 
${\cal R}\subset T\Omega_\ell$ of the tangent bundle of the
orbits of the $SL(2,\mathbb{R})$-action.

By Lemma \ref{flatone}, 
the splitting ${\cal W}={\cal E}_1\oplus {\cal E}_2$ is invariant
under the $SL(2,\mathbb{R})$-action and parallel with respect to 
the restriction of the Gauss Manin connection to the leaves of the
absolute period foliation.
Using again Lemma \ref{invariance4} in the appendix, 
this means that the splitting ${\cal W}={\cal E}_1\oplus {\cal E}_2$
is the pull-back by $\Xi$ of a real analytic 
splitting ${\cal R}={\cal R}_1\oplus {\cal R}_2$ 
of the bundle ${\cal R}$ into a sum 
of two complex vector bundles.

We claim that the curvature form $\Theta$ for the connection
$\nabla^{\cal R}$ preserves this decomposition on the image 
of the map $\Xi$. To this end let
$\gamma\subset {\cal C}$ be a periodic orbit for $\Phi^t$ with the properties
as in Corollary \ref{localirreducible}.
Then $\Xi(\gamma)$ is an orbit in $\Omega_\ell$ for the action of the
diagonal subgroup of $SL(2,\mathbb{R})$. This orbit is  periodic 
under the action of an element $A\in Sp(2\ell,\mathbb{R})$ 
(which is the restriction of an element of $Sp(2g,\mathbb{Z})$ stabilizing 
the subspace $\mathbb{R}^{2\ell}$)
whose eigenvalues   
are all real, of multiplicity one, 
and such that no product of two eigenvalues is an eigenvalue. 

By the assumption of the splitting of ${\cal W}$ and 
Lemma \ref{invariance4}, 
the local splitting ${\cal R}={\cal R}_1\oplus {\cal R}_2$ 
is invariant under the action of $SL(2,\mathbb{R})$. 
As a consequence, 
the complex subspaces ${\cal R}_i$ are
sums of eigenspaces for $A$, containing with an eigenspace for the eigenvalue 
$a>1$ the eigenspace for $a^{-1}$. As in the proof of
Lemma \ref{flatone}, we conclude
that the connection $\nabla^{\cal R}$  on ${\cal R}$ preserves the splitting
${\cal R}={\cal R}_1\oplus {\cal R}_2$ along $\Xi(\gamma)$.

Namely, let $\nabla^{{\cal R}_1}$ be the projection of the connection 
$\nabla^{\cal R}$ to a connection on ${\cal R}_1$. Then $\nabla^{\cal R}-\nabla^{{\cal R}_1}$
is a real analytic tensor field $\beta\in \Omega(T^*\Omega_\ell\otimes
{\cal R}_1^*\otimes {\cal R}_2)$. Since ${\cal R}_1$ and $\nabla^{\cal R}$ are invariant
under the action of the diagonal flow $\Psi^t\subset SL(2,\mathbb{R})$
(we use the notation $\Psi^t$ here to indicate that we are looking at a flow on the 
space $\Omega_\ell$), 
this tensor field is equivariant under the action of $\Psi^t$. As in the proof of 
Lemma \ref{flatone}, as no product of two eigenvalues of the matrix $A$ is 
an eigenvalue, this implies that the restriction of 
$\beta$ to $\Xi(\gamma)$ 
vanishes.

By Corollary \ref{localirreducible}, 
the set of points $q\in V\subset {\cal C}$ which are
contained in a periodic orbit with the above properties 
is dense in $V$.  Hence its image under the restriction of 
the map $\Xi$ 
to a small contractible open subset of $V$ is a dense
subset of a nonempty open subset $E$ of $\Omega_\ell$ where the splitting
${\cal R}={\cal R}_1\oplus {\cal R}_2$ is defined. As a consequence, the 
real analytic tensor
field $\beta$ vanishes on $E$ and hence 
the splitting ${\cal R}={\cal R}_1\oplus {\cal R}_2$ is flat on $E$. 

Using the terminology 
from the appendix, this shows that 
the curvature form of the connection $\nabla^{\cal R}$ 
splits ${\cal R}$ as a complex vector bundle contradicting
Lemma \ref{split}.
This shows the proposition.
We refer to 
Theorem 5.1 of \cite{W14} for a similar result using related arguments.
\end{proof}

\begin{remark}\label{nosplitremark}
The reasoning in the proof of Lemma \ref{flatone} and Proposition \ref{nosplit}
also implies that the Lyapunov filtration for the action of the Teichm\"uller flow
on a stratum of abelian differentials is not smooth (or, less restrictive, is not
of the class $C^2$). As we use covariant differentiation in our argument, 
mere continuity of the filtration can not be ruled out immediately. 
\end{remark}

\begin{corollary}\label{smalleststratum}
\begin{enumerate}
\item Let ${\cal Q}$ be a component of a stratum; then 
for every $2\leq \ell\leq g-1$ there are finitely many 
affine invariant submanifolds of ${\cal Q}$  of rank
$\ell$ which contain every affine invariant 
submanifold of rank $\ell$.
\item The smallest stratum of differentials
with a single zero contains only 
finitely many affine invariant submanifolds
of rank at least two.
\end{enumerate}
\end{corollary}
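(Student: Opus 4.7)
My plan is to bootstrap Proposition \ref{realanalytic} by iterating it, using strictly decreasing dimension under passage to proper affine invariant submanifolds to force termination. For part (1), fix $\ell$ with $2\leq\ell\leq g-1$ and build the required finite collection by the following tree-like procedure. Start with the working set $\mathcal{F}=\{\mathcal{Q}\}$; while some $M\in\mathcal{F}$ has rank strictly greater than $\ell$, apply Proposition \ref{realanalytic} to $M$ (its rank is $>\ell\geq 2$, hence $\geq 3$, so the hypothesis holds) to obtain finitely many proper affine invariant submanifolds of $M$ that contain every rank-$\ell$ submanifold of $M$, and replace $M$ in $\mathcal{F}$ by those children of rank at least $\ell$. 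A child of rank strictly less than $\ell$ can be discarded because rank is monotone under inclusion of affine invariant submanifolds: the projection $p$ to absolute cohomology is linear, so $p(TN)\subset p(TM)$ whenever $N\subset M$.

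The iteration terminates because at each expansion the dimension of the replaced manifold strictly decreases (a proper, connected, closed affine invariant submanifold has strictly smaller dimension), so the tree of iterations has finite depth; combined with the finite branching supplied by Proposition \ref{realanalytic}, this makes the tree finite. When the loop stops, every element of $\mathcal{F}$ has rank exactly $\ell$, and by construction every rank-$\ell$ affine invariant submanifold of $\mathcal{Q}$ is contained in some element of $\mathcal{F}$; this proves part (1).

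For part (2), take $\mathcal{Q}=\mathcal{H}(2g-2)$, the smallest-dimensional stratum in genus $g$, and invoke part (1) for each $\ell\in\{2,\dots,g-1\}$ to obtain a finite collection of rank-$\ell$ affine invariant submanifolds containing every rank-$\ell$ submanifold. The excerpt records that in a stratum with a single zero no rank-$\ell$ affine invariant submanifold properly contains another of the same rank; therefore each rank-$\ell$ submanifold coincides with the container supplied by part (1), giving finitely many rank-$\ell$ submanifolds for each such $\ell$. Taking the union over the finite range $\ell\in\{2,\dots,g-1\}$ and adjoining $\mathcal{Q}$ itself (the unique rank-$g$ affine invariant submanifold of $\mathcal{H}(2g-2)$, since relative and absolute periods agree when there is a single zero and rank $g$ forces the absolute periods to fill $H^1(S,\mathbb{C})$) yields finiteness of all affine invariant submanifolds of rank at least two. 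The only point requiring care is the termination of the iteration in part (1); after that, Proposition \ref{realanalytic} carries out the substantive work.
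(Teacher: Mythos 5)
Your argument is correct and follows essentially the same route as the paper's: both iterate Proposition \ref{realanalytic}, with termination guaranteed by strictly decreasing dimension, and both use the identity $\dim_{\mathbb{C}}\mathcal{C}_+ = 2\,\mathrm{rk}(\mathcal{C}_+)$ in a single-zero stratum to force a rank-$\ell$ submanifold to coincide with its rank-$\ell$ container. You make two small improvements of presentation — you state explicitly the rank-monotonicity fact that justifies discarding children of rank below $\ell$, and you derive part (2) by invoking part (1) for each $\ell$ rather than re-running the inverse induction as the paper does — but the substance of the argument is the same.
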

\begin{proof} Let ${\cal C}$ 
be an affine invariant manifold of rank $\ell\geq 3$. 
By Proposition \ref{realanalytic} and Proposition \ref{nosplit},
there are finitely many proper
affine invariant submanifolds of ${\cal C}$ 
which contain every affine invariant
submanifold of ${\cal C}$ of rank $2\leq k\leq \ell-1$. 

An application of this fact to a component 
${\cal Q}$ of a stratum shows that for $2\leq \ell\leq g-1$,  
there are finitely many
proper affine invariant submanifolds 
${\cal C}_1,\dots,{\cal C}_m$ of ${\cal Q}$ 
which contain every affine
invariant submanifold of ${\cal Q}$ of rank $\ell$.
The dimension of ${\cal C}_i$ is strictly smaller than the dimension 
of ${\cal Q}$.

By reordering we may assume that there is some $u\leq m$ such that
for all $i\leq u$ the rank ${\rm rk}({\cal C}_i)$ of ${\cal C}_i$ is bigger than $\ell$ 
and that for $i>u$ the rank ${\rm rk}({\cal C}_i)$ 
of ${\cal C}_i$ is at most $\ell$.
Apply the first paragraph of this proof 
to each of the affine invariant
manifolds ${\cal C}_i$ $(i\leq u)$. 
We conclude that for each $i$ there are finitely many proper affine
invariant submanifolds of ${\cal C}_i$ 
of rank $r\in [\ell, {\rm rk}({\cal C}_i))$
which contain every affine invariant submanifold 
of ${\cal C}_i$ of rank $\ell$. The dimension of each of these
submanifolds is strictly smaller than the dimension of ${\cal C}_i$. 
In finitely many such steps, each applied to all affine
invariant submanifolds of rank stricltly bigger than $\ell$ 
found in the previous step, we
deduce the statement of the first part of the corollary.

Now let ${\cal H}(2g-2)$ be a stratum of differentials
with a single zero. 
Period coordinates for ${\cal H}(2g-2)$ are given
by absolute periods, and the dimension of 
an affine invariant manifold ${\cal C}\subset {\cal H}(2g-2)$ 
of rank $\ell$ equals $2\ell$.  Thus  
${\cal C}$
does not contain any proper affine invariant submanifold of
rank $\ell$. 

By Proposition \ref{realanalytic} and the first part of this proof, 
there are finitely many proper affine 
invariant submanifolds ${\cal C}_1,\dots,{\cal C}_s$
of ${\cal H}(2g-2)$ 
which contain every
affine invariant submanifold of ${\cal H}(2g-2)$ of rank
at most $g-1$. In particular,
there are only finitely many such manifolds of rank $g-1$. 

To show finiteness of affine invariant manifolds of any
rank $2\leq \ell\leq g-1$, 
apply Proposition \ref{realanalytic} and the first part of this proof to  
each of the finitely many 
affine invariant manifolds constructed in some
previous step and proceed by inverse induction on the rank.
\end{proof}

\begin{remark}
The proof of the second part of Corollary \ref{smalleststratum} immediately
extends to the following statement. An affine invariant manifold
${\cal C}$ with trivial absolute period foliation
contains only finitely many affine invariant manifolds of 
rank at least two.
\end{remark}

\section{Nested affine invariant submanifolds of the same rank}\label{nested}

The goal of this section is to analyze affine invariant submanifolds of affine
invariant manifolds ${\cal C}$ of the same rank and to complete the 
proof of Theorem \ref{finite}. 
Our strategy is a variation of the
strategy used in Section \ref{uniqueness}. 
Namely, we reduce the finiteness statement
to the nonexistence of a certain real analytic
$SL(2,\mathbb{R})$-invariant splitting of the tangent bundle of ${\cal C}$.

We continue to use the notations from the previous sections.
In particular, we denote by ${\cal A\cal P}({\cal C})$ the 
absolute period foliation of an affine invariant manifold ${\cal C}$.
We always assume that this foliation is not trivial.
Moreover, we always assume that the zeros of a differential 
$q\in {\cal C}$ are numbered; this may require to replace
${\cal C}$ by a finite cover.

Recall that the Hodge bundle ${\cal H}\to {\cal M}_g$ is a holomorphic
vector bundle in the orbifold sense. We denote by $J$ its complex
structure. Using the notation from the appendix, 
let ${\cal V}\to Sp(2g,\mathbb{Z})\backslash Sp(2g,\mathbb{R})/U(g)$ be 
the tautological vector bundle over the moduli space 
${\cal A}_g$ of
principally polarized abelian differentials. The period map 
$F:{\cal H}\to {\cal V}$ is holomorphic. 

For an affine invariant manifold ${\cal C}$
contained in a component ${\cal Q}$ of a stratum of the moduli space of 
area one abelian differentials 
denote as before by ${\cal C}_+\subset {\cal H}_+\subset {\cal H}$ the
extension of ${\cal C}$ be scaling; then ${\cal C}_+$ is a 
complex suborbifold of ${\cal H}_+$. 
The foliation of 
${\cal C}$ into the orbits of the $GL^+(2,\mathbb{R})$-action is
holomorphic. 

We use these facts in the proof of 
the following analogue of Proposition \ref{realanalytic}. 
Lemma \ref{connection2} in the appendix discusses splittings 
in the sphere subbundle ${\cal S}$ of the tautological 
vector bundle ${\cal V}$ over the moduli space ${\cal A}_g$
of principally polarized abelian differentials which puts this proposition
into the framework of homogenous spaces.

\begin{proposition}\label{realanalytic2}
Let ${\cal C}$ be an affine invariant manifold of rank $\ell\geq 2$.
Then one of the following two possibilities holds true.
\begin{enumerate}
\item There are at most finitely many proper affine invariant submanifolds 
of ${\cal C}$ of rank $\ell$.
\item Up to passing to a finite cover, the tangent bundle $T{\cal C}$ 
of ${\cal C}$ admits
a non-trivial $\Phi^t$-invariant real analytic splitting 
$T{\cal C}={\cal A}\oplus {\cal V}$ 
where ${\cal A}$ is a
complex subbundle of 
$T{\cal A\cal P}({\cal C})$ and where ${\cal V}$ contains the tangent bundle of the
orbits of the $SL(2,\mathbb{R})$-action.
\end{enumerate}
\end{proposition}
\begin{proof} By the main results of \cite{EMM15}, it suffices 
to show that the second possibility in the proposition holds
true under the following assumption: There is a number $k\geq 1$, and there is 
an open subset $V$ of ${\cal C}$ such that the
set of all affine invariant submanifolds of ${\cal C}$ of complex
codimension $k$ 
whose rank
coincides with the rank of ${\cal C}$ is dense in $V$. 
We therefore suppose from now on that this is the case.

Let as before $J$ be the complex structure on ${\cal C}$. 
Let ${\cal N}$ be 
a $J$-invariant $SL(2,\mathbb{R})$-invariant 
real analytic subbundle of $T{\cal C}$ 
which is complementary to the 
tangent bundle of the orbits of the $SL(2,\mathbb{R})$-action and invariant
under the natural complex structure $J$. 
Such a bundle exists because the leaves of the foliation
into the orbits of the
action of $GL^+(2,\mathbb{R})$
of the natural extension ${\cal C}_+$ of ${\cal C}$ by scaling are both complex
and affine. In fact, using the notations from Section \ref{connections}, 
if we write ${\cal W}={\cal L}\cap {\cal Z}$ then we can take 
${\cal N}=p^{-1}({\cal W}\oplus \overline{\cal W})$.

Let 
${\cal P}\to {\cal C}$ be the real analytic
fibre bundle whose fibre at a point $q\in {\cal C}$ equals 
the Grassmannian of all \emph{complex} subspaces of ${\cal N}_q$ of 
complex codimension $k$. This is a real analytic subbundle
of the fibre bundle whose fibre at $q$ equals the Grassmannian of
all oriented real linear subspaces of codimension $2k$ in ${\cal N}_q$.
The bundle ${\cal P}$ admits a natural
$SL(2,\mathbb{R})$-invariant stratification ${\cal P}=\cup_{i=0}^k{\cal P}_i$
where ${\cal P}_i$ consists of all subspaces which intersect
$T{\cal A\cal P}({\cal C})$ in a subspace of complex codimension $k-i$
(note that some of these strata may be empty).




The flow $\Phi^t$ acts on ${\cal C}$ as a one-parameter
group of real analytic transformations, and its
differential preserves the bundle ${\cal N}$.
For $q\in {\cal C}$ and $t\in \mathbb{R}$ let $\rho(q,t)$ 
be the image of ${\cal P}(\Phi^tq)$ under the map 
$d\Phi^{-t}$. Then  
%
\[{\cal R}_\infty=\cap_t\cup_q \rho(q,t)\]
is a (possibly empty) real analytic subvariety of ${\cal P}$. This 
subvariety is invariant under the action of $\Phi^t$. 

The tangent bundle of 
an affine invariant submanifold ${\cal D}$ of ${\cal C}$ 
intersects the complex vector bundle ${\cal N}$ in a complex subbundle
${\cal N}\cap T{\cal D}\vert {\cal D}$.
This subbundle 
is invariant under the action of the flow $\Phi^t$.
Thus if $q\in {\cal C}$ is contained in 
an affine invariant submanifold of ${\cal C}$ of the same rank as
${\cal C}$ and of complex codimension $k$, then 
${\cal R}_\infty\cap {\cal P}_0(q)\not=\emptyset$.  In particular,
by the assumption on $V$, 
the subvariety ${\cal R}_\infty$ of ${\cal P}$ is not empty, and the restriction
of the canonical projection $\pi:{\cal P}\to V$ to ${\cal R}_\infty$
is surjective.  

Following the reasoning in the proof of Proposition \ref{realanalytic},
by restricting to a smaller open $\Phi^t$-invariant set 
$U\subset V$ we may assume
that ${\cal R}_\infty\to U$ is a real analytic fibre bundle. 
In particular, the fibre is not empty and compact. 


Now ${\cal P}_0\subset {\cal P}$ is an open 
$\Phi^t$-invariant subset of ${\cal P}$, and
${\cal R}_\infty\cap {\cal P}_0(q)\not=\emptyset$ for 
a dense set of points $q$ of $U$. 
Therefore
up to perhaps decreasing the set $U$, we may assume that 
the intersection of ${\cal R}_\infty$ with
${\cal P}_0(q)$ is not empty for every $q\in U$.

To summarize, the restriction of ${\cal R}_\infty$ to the preimage of
$U$ under the projection ${\cal P}\to U$ is a fibre bundle with compact fibre
which contains
a non-empty open subbundle
${\cal E}={\cal R}_\infty\cap {\cal P}_0$. 
This subbundle is invariant under the natural action of the Teichm\"uller flow 
$\Phi^t$.


Each point $q\in {\cal E}$ is a complex subspace of ${\cal N}_q$ 
of complex codimension $k$ which intersects 
$T{\cal A\cal P}({\cal C})$ in a subspace of complex codimension $k$.
Thus for $q\in U$ we can define
\[E(q)=\cap_{z\in {\cal E}(q)}z\subset {\cal N}_q\subset T{\cal C}_q.\]
Then $E(q)$ is a (possibly trivial) complex linear subspace of $T{\cal C}_q$.
As ${\cal E}\to U$ is an open subset of a real analytic fibre bundle, by possibly replacing
the set $U$ by a proper open dense $\Phi^t$-invariant subset
we may assume that for $q\in U$, 
the dimension of $E(q)$ is minimal (perhaps zero). In particular, 
if this dimension is positive, then $\cup_{q\in U}E(q)$ is a real analytic
complex subbundle of ${\cal N}\vert U$. 

Our next goal is to show that indeed, for $q\in U$ the complex dimension of 
$E(q)$ is not smaller than $\ell-1\geq 1$. 
To this end let $\gamma\subset U$ be a periodic orbit with the properties stated in 
Corollary \ref{localirreducible}. For $q\in \gamma$, 
the return map $d\Phi^{\ell(\gamma)}(q)$ acts
on ${\cal N}_q$ as a semisimple linear map $A$, with all eigenvalues 
real and positive, and all eigenvalues but the largest $e^{\ell(\gamma)}$ and the
smallest $e^{-\ell(\gamma)}$ simple over $\mathbb{C}$
(i.e. eigenspaces for eigenvalues different from $e^{\ell(\gamma)}$ and 
$e^{-\ell(\gamma)}$ 
are complex subspaces of complex dimension one).
As ${\cal N}_q$ is complementary
to the tangent space of the orbits of the $SL(2,\mathbb{R})$-action, 
the eigenspaces for the
eigenvalues $e^{\ell(\gamma)}, e^{-\ell(\gamma)}$ are contained in 
$T{\cal A\cal P}({\cal C})$. Moreover, if $a>0$ is an eigenvalue of $A$, then
the same holds true for $a^{-1}$. 

By definition, a point 
$z\in {\cal E}(q)$ is a complex subspace of $T{\cal C}_q$ of 
complex codimension $k$ which is complementary to 
some $k$-dimensional complex subspace of $T{\cal A\cal P}({\cal C})_q$, and
its image under the map
$A$ is complex as well.
We claim that such a subspace 
has to contain the sum of the eigenspaces for $A$ with respect to the 
eigenvalues of absolute value different from $e^{\ell(\gamma)},e^{-\ell(\gamma)}$.
 
To this end recall that the fibre ${\cal P}(q)$ of the bundle ${\cal P}$ at $q$
is a closed subset of the
Grassmann manifold of all oriented linear subspaces of $T{\cal C}_q$ 
of real codimension $2k$.
Thus if  $z\in {\cal E}(q)$, then any limit 
of a subsequence of the sequence $A^iz$ $(i\to \pm \infty)$ 
is complex. Such a limit $y$ is
a fixed point for the action of $A$ on ${\cal P}(q)$ and hence
it is a direct sum of subspaces of eigenspaces.
Furthermore it is complex and hence symplectic. Therefore $y$ 
contains with an eigenspace for the eigenvalue $1<a<e^{\ell(\gamma)}$ 
(which is of dimension one by assumption) also the eigenspace for the
eigenvalue $a^{-1}$. 

By the same reasoning, the dimension of the intersection of 
$y$ with the eigenspace for the eigenvalue $e^{\ell(\gamma)}$ coincides with the
dimension of the intersection with the eigenspace for the 
eigenvalue $e^{-\ell(\gamma)}$. 
Furthermore, as $y\in {\cal E}\subset {\cal P}_0$ by assumption
and as $T{\cal A\cal P}({\cal C})_q$ equals the direct sum of 
the eigenspaces for action of $A$ on ${\cal N}_p$ with respect to the
eigenvalues $e^{\ell(\gamma)},e^{-\ell(\gamma)}$, we conclude that 
the real dimension of this intersection equals 
${\rm dim}_{\mathbb{C}}({\cal A\cal P}({\cal C})-k$. For reasons of dimension,
this implies that $y$ contains the sum of the eigenspaces for the
eigenvalues different from $e^{\ell(\gamma)},e^{-\ell(\gamma)}$. 

Now this discussion is valid for each $y\in {\cal E}_q$ and therefore 
for each $q\in \gamma$, 
the sum of these eigenspaces is contained in $\cap E(q)$. 
In particular, we have ${\rm dim}_{\mathbb{C}}E(q)\geq \ell-1$ and hence
this dimension is
contained in the interval $[\ell-1,{\rm dim}({\cal C})-k-1]$, 
moreover ${\cal N}_q=T{\cal A\cal P}({\cal C})_q+ E(q)$.

As $E(q)$ depends in a real analytic fashion on $q\in U$, the assignment
$q\to E(q)$ is a real analytic $\Phi^t$-invariant subbundle of 
$T{\cal C}$. 
This bundle 
can be used  to construct a real analytic 
$SL(2,\mathbb{R})$-invariant splitting of $T{\cal C}$ as claimed in the second
part of the proposition. 

Namely, let $q\in U$ and let $\mathfrak{a}_1,\dots,\mathfrak{a}_{m-1}\in \mathbb{R}^m$
be linearly independent with zero mean 
(here $m$ is the number of zeros of a differential in ${\cal C}$) such that
for some $u\leq m-1$ 
the vector fields $X_{\mathfrak{a}_1},\dots,X_{\mathfrak{a_u}}$ are 
tangent to ${\cal C}$ and such that moreover their complex span is a linear
subspace of $T{\cal A\cal P}({\cal C})$ complementary to $E(q)$. 
By invariance and Lemma \ref{affine}, these vector fields span a subbundle of 
$T{\cal A\cal P}({\cal C})$ which is complementary to 
the bundle $\cup_qE(q)$. This is what we wanted
to show.
\end{proof}

By Proposition \ref{realanalytic2},
if the affine invariant manifold ${\cal C}$ contains infinitely
many affine invariant manifolds of the same rank as ${\cal C}$, 
then there exists an open dense $\Phi^t$-invariant subset 
$U$ of ${\cal C}$, and there is 
a $\Phi^t$-invariant real analytic splitting 
$T{\cal C}\vert U={\cal A}\oplus {\cal V}$ as a sum of complex vector bundles
with the following additional 
property. The bundle ${\cal A}$ is a subbundle of 
$T{\cal A\cal P}({\cal C})$, and ${\cal V}$ contains the tangent bundle
of the orbits of $SL(2,\mathbb{R})$-action.
The following lemma shows that such a splitting does not exist.

\begin{lemma}\label{closed}
Let ${\cal C}$ be an affine invariant manifold; then there is no
nontrivial $\Phi^t$-invariant real analytic splitting
$T{\cal C}={\cal A}\oplus {\cal V}$ on an open dense invariant
subset of ${\cal C}$ with the following property. 
${\cal A}$ is a complex 
subbundle of $T{\cal A\cal P}({\cal C})$, and 
the complex bundle ${\cal V}$ contains the tangent bundle of the orbits of the
$SL(2,\mathbb{R})$-action.
\end{lemma} 
\begin{proof} We proceed as in the proof of Lemma \ref{flatone} and 
Proposition \ref{nosplit}. Namely, assume to the contrary that
there is an open dense invariant set $U\subset {\cal C}$, and there is 
a $\Phi^t$-invariant splitting $T{\cal C}\vert U={\cal A}\oplus {\cal V}$ as in the 
statement of the lemma.
 
An affine invariant manifold is locally defined by
real linear equations in period coordinates (see \cite{W14}).  
The affine structure of ${\cal C}$ defines a flat connection 
$\nabla^{\cal C}$ on 
$T{\cal C}$ which is invariant under affine transformations. 
In particular, this 
connection is invariant under the $SL(2,\mathbb{R})$-action. 
The bundle ${\cal A}\subset T{\cal A\cal P}({\cal C})$ is flat, i.e. invariant
under parallel transport for $\nabla^{\cal C}$. Namely, it is globally trivialized
by globally defined vector fields $X_{\mathfrak{a}_i}$ where $\mathfrak{a}_i\in 
\mathbb{C}^m$ (compare the proof of Proposition \ref{realanalytic2}).

Our goal is to show that the bundle ${\cal V}$ is flat as well. To this end
recall that there is a real analytic complex subbundle 
${\cal N}\subset T{\cal C}$ which is invariant under the $SL(2,\mathbb{R})$-action
and transverse to the tangent bundle ${\cal T}$ 
of the foliation of ${\cal C}$ into
the orbits of the 
$SL(2,\mathbb{R})$-action. The restriction of the bundle
${\cal N}$ to the leaves of the absolute period foliation is flat. 
Using the splitting $T{\cal C}={\cal N}\oplus {\cal T}$, project the
flat connection $\nabla^{\cal C}$ 
on $T{\cal C}$ to a connection $\nabla^{\cal N}$ on 
${\cal N}$. 
The bundle ${\cal N}\vert U$ admits a real analytic 
$\Phi^t$-invariant decomposition
${\cal N}={\cal W}\oplus {\cal A}$ where ${\cal W}={\cal N}\cap {\cal V}$.
Use this decomposition of ${\cal N}$ to project the connection
$\nabla^{\cal N}$ to a connection $\nabla^{\cal W}$ on ${\cal W}$
(compare the proof of Lemma \ref{flatone}). 
This projection determines a $\Phi^t$-equivariant
real analytic tensor field 
\[\beta\in \Omega(T^*{\cal C}\otimes {\cal W}^*\otimes {\cal A})\]
(compare the discussion in Section \ref{uniqueness}).

We aim at showing  
that $\beta$ vanishes identically, and as before,  
for this it suffices to show that $\beta$ vanishes at any point 
$q\in {\cal C}$ which is contained in a periodic orbit $\gamma$ for 
$\Phi^t$ with the properties stated in Corollary \ref{localirreducible}.
For $q\in \gamma$, 
the return map $A=d\Phi^{\ell(\gamma)}(q)$ acts on ${\cal  N}_q$
as a semisimple automorphism, with all eigenvalues real and distinct from one.
Moreover, no product of two eigenvalues is an eigenvalue. 

Denote by ${\cal R}^{su}$ the intersection
of $T{\cal A\cal P}({\cal C})$ with the tangent bundle of 
the strong unstable foliation, and 
let similarly ${\cal R}^{ss}$ be the intersection of $T{\cal A\cal P}({\cal C})$
with the tangent bundle of the strong stable foliation.
Then the eigenspace for the restriction of the return map 
$A$ to ${\cal N}_q$ for the eigenvalue
$e^{-\ell(\gamma)}$ equals the subspace 
${\cal R}^{ss}_q$, and the eigenspace for the eigenvalue 
$e^{\ell(\gamma)}$ is the subspace ${\cal R}^{su}_q$.

The bundle ${\cal A}$ decomposes as ${\cal A}={\cal A}^{su}\oplus 
{\cal A}^{ss}$ where ${\cal A}^{su}={\cal A}\cap {\cal R}^{su}$ and
${\cal A}^{ss}={\cal A}\cap {\cal R}^{ss}$. 
We use this to show that $\beta$ vanishes at $q$.  
Namely, since the decomposition is invariant under the action of the
Teichm\"uller flow, all connections are invariant and hence the
contraction of $\beta$ with the generator $X\in T{\cal C}$ of 
$\Phi^t$ vanishes everywhere.

By equivariance and the reasoning in the proof of Lemma \ref{flatone},
this implies that if $\beta$ does not vanish at $q$ then 
up to replacing
$\Phi^{\ell(\gamma)}$ by $\Phi^{-\ell(\gamma)}$, we may assume that
there are vectors $Y\in {\cal N}_q\subset T{\cal C}_q, 
Z\in {\cal W}_q$ so that the 
product of the growth rates of $Y$ and $Z$ under $\Phi^{\ell(\gamma)}$ 
equals $e^{\ell(\gamma)}$. However, by the choice of $\gamma$
and the fact that the eigenvalues of the restriction of $A$ 
to ${\cal N}_q$ are off the unit circle, there are no 
two nontrivial vectors with this property. 

As a consequence, the tensor field $\beta$ vanishes identically,
and the bundle ${\cal V}$ 
is flat (compare the discussion in Section \ref{uniqueness}).
This implies that 
$T{\cal C}$ splits as a direct sum of two 
flat subbundles. By Theorem 5.1 of \cite{W14}, such a splitting does not 
exist. The lemma follows.
\end{proof}

As an immediate consequence of Proposition \ref{realanalytic} and
Lemma \ref{closed}, we obtain 

\begin{corollary}\label{final}
An affine invariant manifold ${\cal C}$ of rank at least two 
contains only finitely many affine
invariant submanifolds of the same rank.
\end{corollary}

Theorem \ref{finite} from the introduction is now an immediate
consequence of Proposition \ref{smalleststratum} and Corollary \ref{final}.

\section{Algebraically primitive Teichm\"uller curves}\label{algebraically}

A point in the moduli space of abelian differentials can  be
viewed as a 
\emph{translation surface} $(X,\omega)$, where $X$ denotes a
Riemann surface of genus $g$. 
If the \emph{Veech group} of such a translation surface  
contains a pseudo-Anosov element, then the 
\emph{holonomy field} of $(X,\omega)$ coincides with the
trace field of the pseudo-Anosov element (see e.g. the
appendix of \cite{KS00}- we do not need any additional
information on this field). 
By Lemma 2.10 of \cite{LNW15}, if ${\cal C}$ is a rank
one affine invariant manifold then for all $(X,\omega)\in {\cal C}$, 
the holonomy field of $(X,\omega)$ equals the 
\emph{field of definition} of ${\cal C}$ \cite{W14}.



For the proof of Corollary \ref{cyclic} we have 
a closer look at rank one affine invariant
manifolds ${\cal C}$ 
whose field of definition $\mathfrak{k}$ is of degree $g$ over $\mathbb{Q}$.
Then $\mathfrak{k}$ is a totally real \cite{F16} 
number field of degree $g$, with 
ring of integers ${\cal O}_{\mathfrak{k}}$. Via the $g$ field embeddings
$\mathfrak{k}\to \mathbb{R}$, the group 
$SL(2,{\cal O}_{\mathfrak{k}})$ embeds into 
$G=SL(2,\mathbb{R})\times \dots \times SL(2,\mathbb{R})<Sp(2g,\mathbb{R})$ 
and in fact,
$SL(2,{\cal O}_{\mathfrak{k}})$ is a lattice in $G$.
The trace field of every periodic orbit $\gamma$ in ${\cal C}$
equals $\mathfrak{k}$ and hence the image of the
corresponding pseudo-Anosov element
$\Omega(\gamma)$ under the homomorphism 
$\Psi:{\rm Mod}(S)\to Sp(2g,\mathbb{R})$ 
is contained in $SL(2,{\cal O}_{\mathfrak{k}})$.

The following observation is immediate from Lemma \ref{localzariski}
and \cite{G12}. For its formulation, define the
\emph{extended local monodromy group} of an open contractible subset $U$ 
of ${\cal C}$ to be the subgroup of 
$SL(2,{\cal O}_{\mathfrak{k}})$ which is generated by the monodromy 
of those (parametrized) 
periodic orbits for $\Phi^t$ in ${\cal C}$ which pass through $U$. 
Compare with Proposition \ref{latticeordense}. 

\begin{lemma}\label{zariski2}
For a rank one affine invariant manifold ${\cal C}$ whose field of 
definition is of degree $g$ over
$\mathbb{Q}$, any extended local monodromoy group is Zariski dense in 
$SL(2,\mathbb{R})\times \dots \times SL(2,\mathbb{R})$.
\end{lemma}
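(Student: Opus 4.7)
The plan is to combine the local Zariski density Lemma \ref{localzariski} in one factor with a Goursat-type argument across the $g$ Galois embeddings, ruling out intermediate Zariski closures by the fact that the trace field is of maximal degree $g$.

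First I would set up notation: let $\Gamma \subset SL(2,{\cal O}_{\mathfrak{k}})$ denote the extended local monodromy group of $U$, embedded in $G = SL(2,\mathbb{R})^g$ via the $g$ real embeddings $\sigma_1,\dots,\sigma_g:\mathfrak{k}\to\mathbb{R}$, where $\sigma_1$ is the ``geometric'' embedding arising from the identification of $\mathfrak{k}$ with the holonomy field of a generic $(X,\omega)\in {\cal C}$ as a subfield of $\mathbb{R}$. Let $H\subset G$ be the Zariski closure of $\Gamma$; since $\Gamma$ lies in the $\mathbb{Q}$-points of the $\mathbb{Q}$-algebraic group $L = \operatorname{Res}_{\mathfrak{k}/\mathbb{Q}}SL_2$ whose real points are $G$, the closure $H$ is the extension to $\mathbb{R}$ of a $\mathbb{Q}$-algebraic subgroup $H_\mathbb{Q}\subset L$. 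By Lemma \ref{localzariski} applied to ${\cal C}$ of rank one, the projection $\pi_{\sigma_1}(\Gamma)$ is Zariski dense in the $\sigma_1$-factor $SL(2,\mathbb{R})$; the Galois group of the Galois closure of $\mathfrak{k}$ permutes the $\sigma_i$ and hence conjugates the different factor projections of $H_\mathbb{Q}$ to each other, so Zariski density in one factor forces Zariski density in every factor.

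Next I would invoke a Goursat-type classification for closed algebraic subgroups of a product of simple real algebraic groups: a closed subgroup of $SL(2,\mathbb{R})^g$ that surjects onto each factor is, up to finite index, a ``diagonal'' embedding determined by a partition of $\{1,\dots,g\}$ together with an algebraic isomorphism between paired factors, and the only algebraic automorphisms of $SL(2,\mathbb{R})$ relevant here are inner. If $H\subsetneq G$, then some class has size at least two, so for some $i\neq j$ there is $A\in GL(2,\mathbb{R})$ with $\sigma_j(\gamma)=A\,\sigma_i(\gamma)\,A^{-1}$ for every $\gamma\in\Gamma$, which forces $\sigma_i(\operatorname{tr}\gamma) = \sigma_j(\operatorname{tr}\gamma)$ for every $\gamma\in\Gamma$. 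But $\Gamma$ contains the pseudo-Anosov matrix $A(\gamma_0)$ of any periodic orbit $\gamma_0\subset {\cal C}$, whose trace field is precisely $\mathfrak{k}$ of degree $g$; hence there is $\gamma_0\in\Gamma$ whose trace generates $\mathfrak{k}$ over $\mathbb{Q}$, so the $g$ Galois conjugates $\sigma_k(\operatorname{tr}\gamma_0)$ are pairwise distinct. This contradiction yields $H = G$.

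The main obstacle I anticipate is making the Goursat–Galois step precise at the level of real algebraic groups and their $\mathbb{Q}$-forms: one needs to pass to the identity component $H^0_{\mathbb{Q}}$ (and possibly to a finite-index subgroup of $\Gamma$) to ensure surjectivity onto each factor, and to invoke the classification of algebraic automorphisms of $SL_2$ in the form needed. Once this structural input is in place, the contradiction via traces is an immediate consequence of the defining property of the trace field together with Lemma \ref{localzariski}.
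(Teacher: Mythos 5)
Your proof is correct, and it follows the same two guiding inputs as the paper — Lemma~\ref{localzariski} for Zariski density of the projection to the geometric $SL(2,\mathbb{R})$ factor, and the fact that the trace field $\mathfrak{k}$ has full degree $g$ — but it differs in how it converts those inputs into density of the whole group. The paper stops after observing that the first-factor projection is non-elementary and that the invariant trace field is $\mathfrak{k}$, and then simply cites Corollary~2.2 of Geninska \cite{G12} to conclude Zariski density in $SL(2,\mathbb{R})^g$. You instead open that black box: you note that the Zariski closure $H$ of $\Gamma\subset SL(2,\mathcal{O}_{\mathfrak{k}})\subset L(\mathbb{Q})$, with $L=\operatorname{Res}_{\mathfrak{k}/\mathbb{Q}}SL_2$, is a $\mathbb{Q}$-group; use Galois equivariance of the factor projections to upgrade density from the geometric factor to every factor; and then apply the Goursat structure theorem for subgroups of a product of (adjoint) simple groups, so that a proper $H$ would force $\sigma_i(\operatorname{tr}\gamma)=\sigma_j(\operatorname{tr}\gamma)$ for all $\gamma\in\Gamma$ and some $i\neq j$, which is impossible because the trace of any single pseudo-Anosov in $\Gamma$ already generates $\mathfrak{k}$ (Kenyon--Smillie \cite{KS00}). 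This yields a self-contained and somewhat more elementary argument at the cost of a little extra length, whereas the paper's version is shorter but requires the reader to consult \cite{G12}. The one point you should make fully explicit is the step ``there exists $\gamma_0\in\Gamma$ whose trace generates $\mathfrak{k}$'': this is not just a generic statement but the theorem of \cite{KS00} that the trace field of a Veech group containing a pseudo-Anosov is generated by the trace of \emph{any} pseudo-Anosov element, and the periodic orbits through $U$ supply such an element. With that citation in place, your argument is complete.
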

\begin{proof}
By Proposition \ref{localzariski}, the projection 
of the extended local monodromy group of 
an open set $U\subset {\cal C}$ to the first 
factor $SL(2,\mathbb{R})$ of 
$G=SL(2,\mathbb{R})\times \dots \times SL(2,\mathbb{R})$ is 
Zariski dense in $SL(2,\mathbb{R})$ and hence it is non-elementary. 
Moreover, by definition and \cite{KS00,LNW15}, the invariant trace
field of the extended local monodromy 
group equals $\mathfrak{k}$ 
(compare \cite{G12} for the definition of the invariant 
trace field).
Thus by Corollary 2.2 of \cite{G12}, the extended 
local monodromy group of $U$ is
Zariski dense in $G$.
\end{proof}

In the statement of the next corollary, 
the affine invariant manifold ${\cal B}_+$ may be a
component of a stratum. As before, we put a lower index $+$ 
whenever we do not normalize the area of a holomorphic differential.

\begin{corollary}\label{fieldembed}
Let ${\cal C}_+$ be a rank one affine invariant 
manifold whose field of definition is of degree
$g$ over $\mathbb{Q}$. Assume that ${\cal C}_+$ is properly contained
in an affine invariant manifold ${\cal B}_+$ of rank at least three.
Let ${\cal Z}\to {\cal B}_+$ be the absolute holomorphic tangent bundle
of ${\cal B}_+$; then ${\cal Z}\vert {\cal C}_+$ splits as a sum of 
holomorphic 
line bundles which are invariant under both the Chern connection and 
the Gauss Manin connection.
\end{corollary}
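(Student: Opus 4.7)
Since ${\cal C}_+$ has rank one, each differential in ${\cal C}_+$ is a Veech surface, and by \cite{KS00} and \cite{LNW15} the trace field of its Veech group equals the field of definition $\mathfrak{k}$ of ${\cal C}_+$. By hypothesis $\mathfrak{k}$ is totally real of degree $g$ over $\mathbb{Q}$. The plan is to decompose the Hodge bundle ${\cal H}\vert_{{\cal C}_+}$ into $g$ holomorphic line bundles coming from the real multiplication structure on the algebraically primitive Teichm\"uller curve ${\cal C}_+$, and then use Lemma \ref{zariski2} to identify ${\cal Z}\vert_{{\cal C}_+}$ as a direct sum of some of these line bundles.

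By M\"oller's theorem on real multiplication for algebraically primitive Teichm\"uller curves, the ring ${\cal O}_\mathfrak{k}$ acts on the local system $H^1(S,\mathbb{Z})\vert_{{\cal C}_+}$ by self-adjoint symplectic endomorphisms that commute with the Gauss--Manin monodromy and preserve the Hodge filtration, and exhibit $H^1(S,\mathbb{Z})\vert_{{\cal C}_+}$ as a rank-two ${\cal O}_\mathfrak{k}$-module. Extending scalars along the $g$ real embeddings $\sigma_1,\ldots,\sigma_g$ of $\mathfrak{k}$ produces a Gauss--Manin flat decomposition
\[H^1(S,\mathbb{R})\vert_{{\cal C}_+} = V_1\oplus\cdots\oplus V_g\]
into real symplectic sub-local systems of rank two, where the Veech group acts on $V_i$ through $\sigma_i$. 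Because ${\cal O}_\mathfrak{k}$ preserves the Hodge filtration, each $V_i$ is a polarized sub-Hodge-structure of weight one with $h^{1,0}(V_i)=h^{0,1}(V_i)=1$; in particular $V_i$ is $J$-invariant and so a holomorphic line subbundle of ${\cal H}\vert_{{\cal C}_+}$. Since $\mathfrak{k}$ is totally real and the ${\cal O}_\mathfrak{k}$-action is self-adjoint for the polarization, distinct $V_i$ are polarization-orthogonal, so the induced splitting ${\cal H}\vert_{{\cal C}_+}=V_1\oplus\cdots\oplus V_g$ is orthogonal for the Hodge Hermitean metric.

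As this is an orthogonal Hermitean direct sum of holomorphic subbundles, the uniqueness characterization of the Chern connection forces it to split as the direct sum of the Chern connections on the $V_i$, so each $V_i$ is preserved by the Chern connection. The Gauss--Manin connection also preserves each $V_i$, by its very construction as a flat decomposition of $H^1(S,\mathbb{R})\vert_{{\cal C}_+}$, viewed as a subbundle of ${\cal H}\vert_{{\cal C}_+}$ via the real-analytic identification with its $(1,0)$-projection.

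To identify ${\cal Z}\vert_{{\cal C}_+}$ inside this splitting, recall that $p(T{\cal B}_+)\cap H^1(S,\mathbb{R})$ is a Gauss--Manin flat real symplectic subbundle of $H^1(S,\mathbb{R})\vert_{{\cal B}_+}$, so its restriction to ${\cal C}_+$ is Gauss--Manin flat. By Lemma \ref{zariski2} the Gauss--Manin monodromy on $H^1(S,\mathbb{R})\vert_{{\cal C}_+}$ is Zariski dense in $G=SL(2,\mathbb{R})\times\cdots\times SL(2,\mathbb{R})$ acting on $V_1\oplus\cdots\oplus V_g$ through its product representation. Each $V_i$ is an irreducible real representation of $G$ which factors through the $i$-th factor, and the $V_i$ are pairwise non-isomorphic as $G$-representations; hence every $G$-invariant subspace, and therefore every Gauss--Manin flat subbundle, is a direct sum of a subfamily of the $V_i$. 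It follows that $p(T{\cal B}_+)\cap H^1(S,\mathbb{R})\vert_{{\cal C}_+}=\bigoplus_{i\in I}V_i$ for some $I\subset\{1,\ldots,g\}$ with $\vert I\vert=\mathrm{rk}({\cal B}_+)$, and consequently ${\cal Z}\vert_{{\cal C}_+}=\bigoplus_{i\in I}V_i$ as a direct sum of holomorphic line bundles invariant under both connections. The main obstacle is producing the ${\cal O}_\mathfrak{k}$-action on $H^1\vert_{{\cal C}_+}$ preserving the Hodge filtration in the first place; this requires M\"oller's real multiplication theorem (or Filip's semisimplicity results for the Kontsevich--Zorich cocycle), after which the orthogonality argument and Lemma \ref{zariski2} yield the splitting with little further work.
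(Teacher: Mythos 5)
Your proof is correct and follows essentially the same route as the paper. The paper's own proof is terser: it invokes Theorem 1.5 of Wright's field-of-definition paper \cite{W14} for the splitting of $\mathcal{H}\vert_{\mathcal{C}_+}$ into $g$ line bundles coming from the field embeddings (you derive this instead from M\"oller's real multiplication theorem, which amounts to the same structure), then observes that $\mathcal{Z}\vert_{\mathcal{C}_+}$ is a monodromy-invariant subbundle, and concludes from Zariski density (Lemma \ref{zariski2}) that it must be a sum of these line bundles. You fill in two points the paper leaves implicit and that are worth making explicit: first, the representation-theoretic step — the $V_i$ are pairwise non-isomorphic irreducibles for $G=SL(2,\mathbb{R})^g$, so any $G$-invariant (hence any Gauss--Manin flat) subbundle is a partial direct sum; second, the Chern connection part — the $V_i$ are Hodge-orthogonal by self-adjointness of the $\mathcal{O}_\mathfrak{k}$-action, so the Chern connection splits accordingly. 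Both fills are correct and tighten an argument the paper only gestures at.
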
 
\begin{proof} In the case that the rank of ${\cal B}_+$ equals $g$  
(and hence ${\cal Z}={\cal H}$), the statement is immediate from 
Theorem 1.5 of \cite{W14}. Thus assume that
the rank of ${\cal B}_+$ is at most $g-1$. 

Since ${\cal C}_+\subset {\cal B}_+$, the restriction of ${\cal H}$
to ${\cal C}_+$ has two splittings
which are invariant under the extended local monodromy
of ${\cal C}_+$. The first splitting is the
splitting into $g$ line bundles obtained from the different field 
embeddings of the field of definition of ${\cal C}_+$ into $\mathbb{R}$
(see Theorem 1.5 of \cite{W14}).
The second splitting is the splitting into the 
absolute holomorphic tangent bundle 
${\cal Z}$ of 
${\cal B}_+$ (which is a holomorphic subbundle of 
${\cal H}\vert_{{\cal B}_+}$ 
whose complex rank equals the rank of 
${\cal B}_+$) and its symplectic complement. 
Since by Lemma \ref{zariski2} the extended local monodromy 
group of ${\cal C}_+$ is Zariski dense in 
$SL(2,\mathbb{R})\times \dots\times SL(2,\mathbb{R})$, the 
bundle ${\cal Z}\vert {\cal C}_+$ is a sum of invariant line bundles.  
\end{proof}

\begin{corollary}\label{teichmullercurve}
For a component ${\cal Q}_+$ of a stratum in genus $g\geq 3$, 
all affine invariant submanifolds
of rank one 
whose fields of definition are of degree $g$  over $\mathbb{Q}$
are contained in a finite collection of 
affine invariant submanifolds of rank at most two.
\end{corollary}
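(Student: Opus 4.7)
The strategy is to run the real analytic Grassmannian argument from the proof of Proposition \ref{realanalytic} in the special case $k=1$, using Corollary \ref{fieldembed} in place of the tangent projection from \cite{F13} to provide the necessary sections over the low-rank affine invariant manifold ${\cal C}_+$. Let ${\cal P}(1)\to {\cal Q}_+$ denote the bundle of complex lines in the leafwise bundle ${\cal L}$, and let $\xi^1\subset {\cal P}(1)$ be the real analytic subset defined exactly as in the proof of Proposition \ref{realanalytic}: a line $L$ lies in $\xi^1$ precisely when it is an eigenline for each of $B^1,C^1,D^1$ and this property is preserved at every time along Gauss-Manin parallel transport on the $\Phi^t$-orbit through the basepoint. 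For any rank-one affine invariant manifold ${\cal C}_+$ with $\deg(\mathfrak{k})=g$, the hypothesis $g\geq 3$ permits application of Corollary \ref{fieldembed} to ${\cal C}_+\subset {\cal Q}_+$, producing a splitting of ${\cal H}\vert_{{\cal C}_+}$ as a sum of $g$ holomorphic line bundles which are invariant under both the Chern and the Gauss-Manin connections. The $g-1$ of them contained in ${\cal L}\vert_{{\cal C}_+}$ furnish sections of $\xi^1$ above ${\cal C}_+$, so ${\cal C}_+\subset\pi(\xi^1)$.

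The key step is to verify that $\pi(\xi^1)$ is nowhere dense in ${\cal Q}_+$. Should it contain a non-empty open subset, the three-case Zariski-density analysis in the proof of Proposition \ref{realanalytic} extracts from the fibrewise invariant lines either a proper flat invariant subbundle of ${\cal L}$, a non-trivial eigenspace decomposition, or an identification of the fibrewise span with the full $+1$-eigenspace of $J$ on the even exterior power. Each alternative produces a monodromy-invariant proper subbundle, and therefore contradicts the Zariski density of the local monodromy of ${\cal Q}_+$ in $Sp(2g,\mathbb{R})$ supplied by Lemma \ref{localzariski} since $g\geq 3$. Hence $\pi(\xi^1)$ is closed and nowhere dense. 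Its $SL(2,\mathbb{R})$-saturation is a closed, nowhere dense, $SL(2,\mathbb{R})$-invariant subset of ${\cal Q}_+$, and Theorem 2.2 of \cite{EMM13} exhibits it as a finite union ${\cal A}_1\cup\cdots\cup{\cal A}_r$ of proper affine invariant submanifolds of ${\cal Q}_+$. Every rank-one AIM ${\cal C}_+$ with $\deg(\mathfrak{k})=g$, being connected and contained in $\pi(\xi^1)$, lies in some ${\cal A}_i$.

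To finish, iterate: for each ${\cal A}_i$ of rank at least three, apply the entire preceding argument with ${\cal A}_i$ in place of ${\cal Q}_+$ — legitimate because ${\cal A}_i$ is itself locally Zariski dense by Lemma \ref{localzariski}, and any rank-one AIM ${\cal C}_+$ with $\deg(\mathfrak{k})=g$ sitting in ${\cal A}_i$ still satisfies the hypothesis of Corollary \ref{fieldembed}. Strict decrease of dimension at each recursive step forces termination in finitely many rounds, producing a finite collection of affine invariant submanifolds of rank at most two covering every rank-one AIM with $\deg(\mathfrak{k})=g$. Theorem \ref{finite} applied with $\ell=2$ can then absorb the rank-two members into the finite family of maximal rank-two containers. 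The principal obstacle is the nowhere-dense claim, where the delicate three-case Zariski-density argument of Proposition \ref{realanalytic} must be executed in the present setting; ruling out the third possibility — that the fibrewise span of invariant lines coincides with the entire $J$-eigenspace on $\wedge^2_{\mathbb{R}}{\cal L}$ — is the most subtle point, and it is here that irreducibility of the full ambient monodromy representation in $Sp(2g,\mathbb{R})$ is indispensable.
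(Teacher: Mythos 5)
Your proposal follows essentially the same route as the paper: use Corollary \ref{fieldembed} (equivalently, the real-multiplication splitting from \cite{W14}) to see that every rank-one affine invariant manifold with degree-$g$ trace field supplies $g-1$ flat invariant line subbundles of ${\cal L}$, hence lies in the projection of the $k=1$ real analytic set $\xi^1$ from the proof of Proposition \ref{realanalytic}; run the nowhere-density argument of that proof using Lemma \ref{localzariski}; invoke Theorem 2.2 of \cite{EMM13} to extract finitely many proper affine invariant submanifolds; and then iterate, reapplying Corollary \ref{fieldembed} whenever the containing manifold still has rank at least three, until the rank drops to at most two. The paper compresses this into a few sentences citing ``Proposition \ref{realanalytic} and its proof,'' whereas you unwind the $\xi^1$, nowhere-density, and saturation steps explicitly, but the underlying argument is identical.

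One small technical point worth being careful about: you write that ``Its $SL(2,\mathbb{R})$-saturation is a closed, nowhere dense, $SL(2,\mathbb{R})$-invariant subset.'' The saturation of a nowhere dense set need not itself be nowhere dense or closed in general; the paper's formulation avoids this by passing instead to the complement of the saturation of the open dense set ${\cal C}\setminus{\cal A}$, which is automatically closed, $SL(2,\mathbb{R})$-invariant, and nowhere dense. This is a minor wording issue rather than a genuine gap, since the intended set is the same, but you should phrase it the paper's way to make the nowhere-density manifest.
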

\begin{proof}
Let $\mathfrak{C}$ be the collection of all rank one 
affine invariant submanifolds of ${\cal Q}$ whose
field of definition
%
%
%
%
%
is a number field of degree $g$ over $\mathbb{Q}$. 
For each ${\cal C}_+\in \mathfrak{C}$, 
the restriction of the bundle ${\cal L}$ to ${\cal C}_+$ 
splits as a sum of flat holomorphic line bundles
which are invariant under the Gauss-Manin connection. 
Thus by Proposition \ref{realanalytic}
and its proof, all but finitely many elements of $\mathfrak{C}$ 
are submanifolds of a finite set of affine invariant submanifolds
of ${\cal Q}$ of rank at most $g-1$.

Now if ${\cal B}_+$ is an affine invariant manifold of rank
contained in $[3,g-1]$ which contains some 
${\cal C}_+\in \mathfrak{C}$ 
then by Corollary \ref{fieldembed},
the above reasoning can be applied to ${\cal B}_+$.
In finitely many steps we
find finitely many proper affine invariant manifolds 
${\cal C}_1,\dots,{\cal C}_k \subset {\cal Q}_+$ of rank
at most two which contain every 
${\cal C}_+\in \mathfrak{C}$.
This is the statement of the corollary. 
\end{proof}

Now we are ready to complete the proof of Corollary \ref{cyclic}.

\begin{corollary}\label{orbitclosures}
For $g\geq 3$ 
the $SL(2,\mathbb{R})$-orbit closure of 
a typical periodic orbit in any component of a stratum
is the entire stratum.
\end{corollary}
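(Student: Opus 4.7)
\textbf{Proof plan for Corollary \ref{orbitclosures}.} The plan is to combine the typicality of the degree-$g$ totally real trace field property (Corollary \ref{periodicprim}) with the finiteness statements for proper affine invariant submanifolds (Theorem \ref{finite}, Corollary \ref{teichmullercurve}, and \cite{MW15}), and then to exclude all surviving candidates for a proper $SL(2,\mathbb{R})$-orbit closure by an entropy comparison.

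First, by Corollary \ref{periodicprim} the subset of $\Gamma$ consisting of orbits whose trace field is totally real of degree $g$ over $\mathbb{Q}$ is typical. Since the intersection of two typical sets is typical, it suffices to prove that the set $\mathfrak{B}$ of such orbits $\gamma$ whose $SL(2,\mathbb{R})$-orbit closure ${\cal M}(\gamma) \subsetneq {\cal Q}$ is non-typical. By \cite{EMM13}, ${\cal M}(\gamma)$ is in any case an affine invariant submanifold of ${\cal Q}$, and I split according to its rank $\ell = {\rm rk}({\cal M}(\gamma))$.

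Second, I dispose of each possible rank. If $\ell = g$ and ${\cal M}(\gamma) \neq {\cal Q}$, then by the theorem of Mirzakhani and Wright \cite{MW15} there are only finitely many such maximal-rank proper submanifolds in ${\cal Q}$. If $2 \leq \ell \leq g-1$, iterated application of Theorem \ref{finite} (first to ${\cal Q}$, then to each of the finitely many proper submanifolds produced at the previous step whose rank exceeds $\ell$) shows that ${\cal M}(\gamma)$ is contained in one of finitely many proper affine invariant submanifolds of ${\cal Q}$. If $\ell = 1$, then ${\cal M}(\gamma)$ is a Teichm\"uller curve, and by \cite{KS00} its trace field coincides with the trace field of $\gamma$ and therefore has degree $g$; hence ${\cal M}(\gamma)$ is algebraically primitive, and Corollary \ref{teichmullercurve} places it in one of finitely many affine invariant submanifolds of rank at most two. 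In every case, $\gamma$ lies in one of finitely many proper affine invariant submanifolds ${\cal C}_1,\dots,{\cal C}_N$ of ${\cal Q}$, with $N$ independent of $\gamma$.

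Third and finally, I count periodic orbits contained in each ${\cal C}_j$. Applying Proposition \ref{count} and the counting asymptotic of \cite{EMR12, H13} internally to the affine invariant submanifold ${\cal C}_j$ (note that Section 2 of the present paper was written so as to apply verbatim to any affine invariant submanifold), the number of periodic orbits of $\Phi^t$ contained in ${\cal C}_j$ of length at most $R$ is asymptotic to $e^{h_j R}/(h_j R)$, where $h_j$ is the entropy of $\Phi^t$ on ${\cal C}_j$ with respect to its natural Lebesgue-class probability measure. Since ${\cal C}_j$ is properly contained in ${\cal Q}$, its real dimension is strictly smaller than that of ${\cal Q}$, and the entropy of the Teichm\"uller flow on an affine invariant submanifold is a strictly monotone function of this dimension; hence $h_j < h$. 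Consequently the periodic orbits contained in ${\cal C}_j$ grow at a strictly slower exponential rate than $e^{hR}/(hR)$, so each family is non-typical, and a finite union of non-typical sets is non-typical. Therefore $\mathfrak{B}$ is non-typical, which is what was to be shown.

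The main obstacle is the strict entropy inequality $h_j < h$ for proper affine invariant submanifolds ${\cal C}_j \subsetneq {\cal Q}$. This is a ``dimension strictly decreases entropy'' statement that does not follow from the variational principle on ${\cal Q}$ alone (since ${\cal C}_j$ has measure zero for the Lebesgue measure on ${\cal Q}$), but must be invoked from the explicit formula for the entropy of the Teichm\"uller flow on an affine invariant manifold in terms of its complex dimension.
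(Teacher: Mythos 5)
Your decomposition of the problem by the rank of the orbit closure, and your use of the degree-$g$ trace field property from Corollary \ref{periodicprim} to rule out rank one, is structurally the same as the paper's argument (the paper uses Proposition \ref{realanalytic} directly for intermediate ranks, Theorem 1.3 of \cite{MW15} for rank $g$, and Corollary \ref{teichmullercurve} for rank one). However, your final counting step diverges from the paper in a way that introduces real difficulties.

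The paper does \emph{not} appeal to an entropy comparison inside the proper submanifolds ${\cal C}_j$. It simply uses the equidistribution already established in \cite{H13}: for any fixed non-empty open subset $U$ of ${\cal Q}$, the set of periodic orbits passing through $U$ is typical. Since the ${\cal C}_1,\dots,{\cal C}_N$ are finitely many closed, nowhere dense $\Phi^t$-invariant subsets, their union has open non-empty complement; choosing $U$ there shows that the set of periodic orbits entirely contained in $\bigcup_j {\cal C}_j$ has density zero. This is the whole argument, and it requires nothing beyond Proposition \ref{count}.

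Your route instead invokes the asymptotic $\sharp\{\gamma \subset {\cal C}_j : \ell(\gamma)\leq R\} \sim e^{h_j R}/(h_j R)$ together with the strict inequality $h_j < h$. Both of these are genuinely more than what the paper has established. The global asymptotic $\sim e^{hR}/(hR)$ is imported from \cite{EMR12}, which concerns \emph{strata}; the remark at the start of Section 2 only asserts that the \emph{local} results (Propositions \ref{count}, \ref{lowerestimate}) carry over to affine invariant manifolds. You correctly flag that the strict monotonicity of entropy in the (complex) dimension of an affine invariant manifold is the main obstacle, but you then assert it without a reference to an ``explicit formula'' that the paper does not provide or cite; this is a gap. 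A cleaner repair of your step 3 is to drop the entropy comparison entirely and argue exactly as the paper does, via equidistribution against a fixed open set.

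One further minor inaccuracy: in the rank-one case you write ``then ${\cal M}(\gamma)$ is a Teichm\"uller curve.'' A rank-one affine invariant manifold need not be a Teichm\"uller curve; it may have higher dimension coming from rel deformations. This does not damage your argument, since Corollary \ref{teichmullercurve} is stated for general rank-one affine invariant submanifolds whose field of definition has degree $g$, and you apply it correctly in the end, but the phrase should be removed or rephrased (the key point, which the paper states explicitly, is that by \cite{LNW15} the trace field of $\gamma$ equals the field of definition of the rank-one orbit closure).
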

\begin{proof} Let ${\cal Q}$ be a component of a stratum and let 
$U\subset {\cal Q}$ be a non-empty open set.
Then 
a typical periodic orbit for $\Phi^t$ passes through $U$
\cite{H13}. Thus by Theorem \ref{finite} (see also \cite{MW15,MW16}), 
the $SL(2,\mathbb{R})$-orbit closure 
of a typical periodic orbit either equals the entire stratum, or it is 
an affine invariant
manifold of rank one.

By the second part of Theorem \ref{theolyapunov}, 
the trace field of a typical perodic orbit $\gamma$ is 
totally real and of degree $g$ over
$\mathbb{Q}$. If the rank of the $SL(2,\mathbb{R})$-orbit 
closure ${\cal C}$ of $\gamma$ equals one then
this trace field is the field of definition of ${\cal C}$
\cite{LNW15}. 
Thus the corollary follows from Corollary \ref{teichmullercurve}.
\end{proof}

We complete this work with the proof of 
Theorem \ref{finiteteich}.
We begin with

\begin{proposition}\label{fakerankone}
Let $g\geq 3$ and let ${\cal B}_+\subset {\cal Q}_+$ 
be a rank two affine invariant manifold. Then 
the union of 
all algebraically primitive Teichm\"uller curves which are
contained in ${\cal B}_+$ is nowhere dense in 
${\cal B}_+$.  
\end{proposition}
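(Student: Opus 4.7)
The plan is to argue by contradiction, adapting the real analytic locus argument of Proposition \ref{realanalytic} to the symplectic complement ${\cal Z}^\perp$ of the absolute holomorphic tangent bundle ${\cal Z}$ of ${\cal B}_+$ in ${\cal H}\vert_{{\cal B}_+}$. Suppose the algebraically primitive Teichm\"uller curves $\{{\cal C}_n\}\subset {\cal B}_+$ accumulate at some $\omega_0\in {\cal B}_+$. Passing to a convergent subsequence, the closure theorem of \cite{EMM13} yields a limiting affine invariant submanifold ${\cal C}_\infty\subset {\cal B}_+$ with $\omega_0\in {\cal C}_\infty$; since distinct Teichm\"uller curves are disjoint minimal $SL(2,\mathbb{R})$-invariant sets, ${\cal C}_\infty$ has rank at least two and hence equals ${\cal B}_+$. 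Consequently $\bigcup_n{\cal C}_n$ is dense in ${\cal B}_+$.

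Each algebraically primitive ${\cal C}_n$ comes with a fiberwise action of its totally real trace field $\mathfrak{k}_n$ of degree $g$ on ${\cal H}\vert_{{\cal C}_n}$, and this action decomposes ${\cal H}\vert_{{\cal C}_n}$ into $g$ flat holomorphic line bundles indexed by the real embeddings of $\mathfrak{k}_n$. One summand is the tautological line ${\cal T}$, and the Zariski density of the extended local monodromy of ${\cal C}_n$ in $SL(2,\mathbb{R})^g$ (Lemma \ref{zariski2}) forces ${\cal Z}\vert_{{\cal C}_n}={\cal T}\oplus {\cal W}\vert_{{\cal C}_n}$. The remaining $g-2$ line bundles split ${\cal Z}^\perp\vert_{{\cal C}_n}$, each simultaneously preserved by the restrictions of the Chern connection $\nabla$ and the Gauss--Manin connection $\nabla^{GM}$.

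Now replicate the construction of Proposition \ref{realanalytic} for the rank-$(g-2)$ holomorphic subbundle ${\cal Z}^\perp\to {\cal B}_+$: the tensor $\Xi^\perp=\nabla-\nabla^{GM}$ on ${\cal Z}^\perp$ is real analytic in period coordinates, and one forms its contractions $B^\perp,C^\perp,D^\perp$ with the generators $X,Y,Z$ of the $GL(2,\mathbb{R})$-action. On the real analytic Grassmannian bundle ${\cal P}^\perp\to {\cal B}_+$ of complex lines in ${\cal Z}^\perp$, define $\xi^\perp$ as the closed real analytic subset of lines invariant under $B^\perp,C^\perp,D^\perp$ and remaining so under parallel transport for $\nabla^{GM}$ along the Teichm\"uller flow. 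The previous paragraph provides $g-2$ points of $\xi^\perp$ over each $\omega\in\bigcup_n{\cal C}_n$, so the closed real analytic $SL(2,\mathbb{R})$-invariant set ${\cal A}^\perp=\pi(\xi^\perp)$ contains $\bigcup_n{\cal C}_n$ and therefore coincides with ${\cal B}_+$ by density.

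The hard part is to extract a contradiction from ${\cal A}^\perp={\cal B}_+$. The analogous step in Proposition \ref{realanalytic} used local Zariski density of the monodromy of ${\cal B}_+$ on ${\cal Z}$ in $Sp(2\ell,\mathbb{R})$, but such density is not available a priori on ${\cal Z}^\perp$, so a direct adaptation breaks. Instead one has to exploit that the splittings provided by algebraic primitivity are simultaneous eigenspace decompositions for a commutative $\mathfrak{k}_n$-action that commutes with the monodromy of ${\cal C}_n$, and that these actions vary non-trivially with $n$ while the monodromy algebra of ${\cal B}_+$ on ${\cal Z}^\perp$ is fixed. Combined with the non-degeneracy of the second fundamental form $\Xi$ of the Hodge filtration on a non-trivial sub-variation of Hodge structure inside ${\cal H}\vert_{{\cal B}_+}$, a real analytic rigidity statement should force the fibers of $\xi^\perp$ to be finite over a generic point, incompatible with the existence of densely many genuinely different $\mathfrak{k}_n$-splittings of ${\cal Z}^\perp$; this is the step where the work of the proof must be concentrated.
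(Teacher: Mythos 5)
Your proposal contains an acknowledged gap, and the direction in which you suggest completing it is actually opposite to what makes the paper's argument work.

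You correctly set up the contradiction and recognize that each algebraically primitive curve carries a $\mathfrak{k}_n$-splitting of the Hodge bundle compatible with both connections; your attempt to run the real analytic locus argument of Proposition~\ref{realanalytic} on ${\cal Z}^\perp$ to conclude that the resulting locus ${\cal A}^\perp$ is all of ${\cal B}_+$ is plausible in spirit. But you then admit that extracting a contradiction from ${\cal A}^\perp={\cal B}_+$ requires a ``real analytic rigidity statement'' that you cannot supply, because the local Zariski density input from Lemma~\ref{localzariski} is unavailable for the complementary bundle ${\cal Z}^\perp$. That is precisely where the argument is incomplete, and it is a genuine, not merely technical, gap.

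There is also a more substantive misdirection in your sketch of how to finish. You propose to exploit that the trace fields $\mathfrak{k}_n$ (and hence the induced splittings) ``vary non-trivially with $n$.'' The paper's proof establishes the \emph{opposite}: using that the extended local monodromy of a small contractible $U\ni\omega_0$ preserves ${\cal Z}$, and that for each accumulating curve the extended local monodromy is Zariski dense in an $SL(2,\mathbb{R})^g$ determined by its trace field (Lemma~\ref{zariski2}), one sees that two distinct trace fields would force the Zariski closure of the local monodromy of ${\cal B}_+$ to contain two incompatible copies of $SL(2,\mathbb{R})^g$, which cannot preserve ${\cal Z}$. Hence $\mathfrak{k}_n=\mathfrak{k}$ for all but finitely many $n$. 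Once the field is fixed, the paper finishes by pure complex analysis: the composition of the foot-point projection with the Torelli map is holomorphic, the Hilbert modular variety $H(\mathfrak{k})\subset{\cal A}_g$ is a complex submanifold, and the existence of an accumulation point of preimages forces (by the identity principle) all of ${\cal B}_+$ to map into $H(\mathfrak{k})$, and similarly all of ${\cal B}_+$ to consist of eigenforms, contradicting ${\rm rk}({\cal B}_+)=2$. This replaces your missing rigidity input by the identity theorem for holomorphic maps and makes no use of ${\cal Z}^\perp$ or of the tensor $\Xi^\perp$ at all.

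Finally, your opening reduction to density of $\bigcup_n{\cal C}_n$ in ${\cal B}_+$ is also not justified: an accumulation point $\omega_0$ does not by itself produce a limiting affine invariant manifold equal to ${\cal B}_+$, and a rank one affine invariant manifold of higher dimension could in principle contain infinitely many accumulating Teichm\"uller curves, so the claim that ${\cal C}_\infty$ must have rank $\geq 2$ needs more care. The paper avoids this entirely by working locally around $\omega_0$.
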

\begin{proof} Let ${\cal B}_+\subset {\cal Q}_+$ be 
a rank two affine invariant manifold. We argue by contradiction,
and we assume that the closure of the union of 
all algebraically primitive Teichm\"uller curves 
${\cal C}_+\subset {\cal B}_+$ contains some open subset
$V$ of ${\cal B}_+$.

Let ${\cal Z}\to {\cal B}_+$ be the absolute holomorphic
tangent bundle of ${\cal B}_+$. 
Let $U$ be a small contractible subset of $V$ 
so that there is a trivialization of the Hodge bundle
over $U$ defined by the Gauss Manin connection. 
The extended local monodromy group of $U$ preserves 
${\cal Z}$. Let ${\cal C}_i\subset {\cal B}_+$ be a 
sequence of algebraically primitive Teichm\"uller curves 
which pass through $U$ and whose closures contain
a compact subset of $U$ with non-empty interior $W$.

Let $\Pi:{\cal Q}_+\to {\cal M}_g$ be the canonical
projection and let ${\cal I}_g:{\cal M}_g\to {\cal A}_g$ be the 
Torelli map. The image under $\Pi$ of the curve 
${\cal C}_i$ is an algebraic curve which admits a 
\emph{modular embedding}.
Namely, by the main result of \cite{Mo06}, 
there is a totally real number field $K_i$ of degree 
$g$ over $\mathbb{Q}$, there is an order 
$\mathfrak{o}_{K_i}$ in $K_i$, 
and there is an embedding
\[SL(2,\mathfrak{o}_{K_i})\to SL(2,\mathbb{R})\times 
\dots \times SL(2,\mathbb{R})\to Sp(2g,\mathbb{R})\]
which maps $SL(2,\mathfrak{o}_{K_i})$ into $Sp(2g,\mathbb{Z})$ 
and such that 
the image of
${\cal C}_i$ under the Torelli map is contained in the Hilbert
modular variety 
$H(\mathfrak{o}_{K_i})$. This Hilbert modular variety is 
the quotient of 
${\bf H}^2\times \dots \times{\bf H}^2$ under the lattice
$SL(2,\mathfrak{o}_{K_i})$ in a Lie subgroup $G_i$ of 
$Sp(2g,\mathbb{R})$ which is isomorphic to 
$SL(2,\mathbb{R})\times\dots \times SL(2,\mathbb{R})$.

We claim that $G_i=G_j=G$ for all $i$.
Namely, assume otherwise. Then there are algebraically primitive
Teichm\"uller curves 
${\cal C}_i,{\cal C}_j$ which intersect $U$
and for which the groups $G_i,G_j$ are distinct. 
By Lemma \ref{zariski2}, the extended  
local monodromy groups of ${\cal C}_i\cap U$ and 
${\cal C}_j\cap U$ are Zariski dense in $G_i,G_j$. Therefore
the Zariski closure in $Sp(2g,\mathbb{R})$ of the extended 
local monodromy group of $U\subset {\cal B}_+$ contains
$G_i\cup G_j$. But as $G_i\not=G_j$, 
a subgroup of $Sp(2g,\mathbb{R})$ which 
contains $G_i\cup G_j$ can not preserve
the subspace ${\cal Z}$. This is a contractiction and implies 
that indeed, $G_i=G_j=G$ for all $i$.

Write $SL(2,\mathfrak{o})=SL(2,\mathfrak{o}_{K_i})$. 
The Hilbert 
modular variety $H(\mathfrak{o})=H(\mathfrak{o}_{K_i})\subset {\cal A}_g$ 
consists of abelian varieties with real multiplication with the field
$K=K_i$. The image of ${\cal C}_i$ under the map
${\cal I}_g\circ \Pi$ is contained in 
$H(\mathfrak{o})$.
As a consequence, the set of points in 
${\cal B}_+$ which are mapped by 
the composition of the foot-point projection 
$\Pi:{\cal B}_+\to {\cal M}_g$ with
the Torelli map ${\cal I}_g$ into 
$H(\mathfrak{o})$ contains  
a dense subset of the open set $W$.
But $H(\mathfrak{o})$ is a complex submanifold of 
${\cal A}_g$ and this composition map is holomorphic and 
therefore the image of ${\cal B}_+$ is contained in 
$H(\mathfrak{o})$. 

We showed so far that each point in ${\cal B}_+$ is an abelian
differential whose Jacobian has real multiplication with
$K$. Now a point on an algebraically primitive 
Teichm\"uller curve is mapped to an eigenform for real 
multiplication \cite{Mo06} and hence the closure of the 
set of differentials in ${\cal B}_+$ 
which are mapped to eigenforms for this real multiplication
contains an open set. This implies as before that
each point in ${\cal B}_+$ corresponds to such an eigenform and
hence  ${\cal B}_+$ is
a rank one affine invariant manifold, contrary to our assumption.
The proposition follows
\end{proof}

\noindent
{\it Proof of Theorem \ref{finiteteich}:}

Let ${\cal Q}$ be a component of a stratum in genus $g\geq 3$. 
By Corollary \ref{teichmullercurve}, 
there are finitely many affine invariant submanifolds 
${\cal B}_1,\dots,{\cal B}_k$ of rank two which contain
all but finitely many algebraically primitive Teichm\"uller curves.

Let ${\cal B}_i$ be such an affine invariant manifold of rank
two. Assume that its dimension equals $r$ for some $r\geq 4$. 
By Proposition \ref{fakerankone}, the closure of the 
union of all algebraically
primitive Teichm\"uller curves which 
are contained in ${\cal B}_i$ 
is nowhere dense in ${\cal B}_i$. As this closure is invariant under
the action of $GL(2,\mathbb{R})$, it consists of a finite
union of affine invariant manifolds. 
The dimension of each of these invariant submanifolds
is at most $r-1$. 

If there are submanifolds of rank two in this collection 
then we can repeat this argument with each of these finitely 
many submanifolds. 
By inverse induction on the dimension, this yields that all but finitely 
many algebraically primitive Teichm\"uller curves are
contained in one of finitely many 
affine invariant manifolds of rank one.
The field of definition of such a manifold coincides with the
field of definition of the Teichm\"uller curve, in particular
it is of degree $g$ \cite{LNW15}.

By the main result of \cite{LNW15}, a rank one affine invariant
manifold with field of definition of degree $g$ over $\mathbb{Q}$ 
only contains finitely many Teichm\"uller curves.
Thus the number of algebraically primitive 
Teichm\"uller curves in ${\cal Q}$ is finite as promised.

%

\begin{appendix}
\section{Structure of the homogeneous space
$Sp(2g,\mathbb{Z})\backslash Sp(2g,\mathbb{R})$}
\label{structure}

In this appendix we collect some geometric properties of the 
Siegel upper half-space $\mathfrak{D}_g=
Sp(2g,\mathbb{R})/U(g)$ and its quotient
${\cal A}_g=Sp(2g,\mathbb{Z})\backslash \mathfrak{D}_g$
which are either directly or indirectly used in the proofs of our
main results.

The tautological vector bundle 
\[{\cal V}\to \mathfrak{D}_g\] over the 
Hermitean symmetric space 
$\mathfrak{D}_g=Sp(2g,\mathbb{R})/U(g)$
is obtained as follows. 

Via the right action of the unitary group $U(g)$, 
the symplectic group $Sp(2g,\mathbb{R})$ is an 
$U(g)$-principal
bundle over $\mathfrak{D}_g$. The bundle ${\cal V}$ is the associated
vector bundle 
\[{\cal V}=Sp(2g,\mathbb{R})\times_{U(g)}\mathbb{C}^g\]
where $U(g)$ acts from the right by $(x,y,\alpha)\to 
(x\alpha,\alpha^{-1}y)$.
The bundle ${\cal V}$ is holomorphic, and it is equipped with
a hermitean metric obtained from an $U(g)$-invariant 
hermitean inner product 
on $\mathbb{C}^g$.
As $U(g)$ acts transitively on the unit sphere in $\mathbb{C}^g$, 
with isotropy group $U(g-1)$, 
the associated sphere bundle 
\[{\cal S}=Sp(2g,\mathbb{R})\times_{U(g)}S^{2g-1}\] in 
${\cal V}\to \mathfrak{D}_g$ 
can naturally be identified with the homogeneous space
\[{\cal S}=Sp(2g,\mathbb{R})/U(g-1)\]
(Proposition 5.5 of \cite{KN63}).

The group $Sp(2g-2,\mathbb{R})$ is the isometry group of 
Siegel upper half-space 
\[\mathfrak{D}_{g-1}=Sp(2g-2,\mathbb{R})/U(g-1).\]
Since the action of $Sp(2g-2,\mathbb{R})$ on 
$\mathfrak{D}_{g-1}$ is transitive, with isotropy group 
$U(g-1)$, the bundle 
${\cal S}=Sp(2g,\mathbb{R})/U(g-1)\to \mathfrak{D}_g$ can also be identified with 
the associated bundle
\[{\cal S}=Sp(2g,\mathbb{R})\times_{Sp(2g-2,\mathbb{R})}
\mathfrak{D}_{g-1}\]
where $Sp(2g-2,\mathbb{R})$ act via
\[(g,x)h=(gh,h^{-1}(x)).\]
The first factor projection then defines a 
projection 
\[\Pi:{\cal S}\to Sp(2g,\mathbb{R})/Sp(2g-2,\mathbb{R}).\]

Let $\omega=\sum_idx_i\wedge dy_i$ be
the standard symplectic form on $\mathbb{R}^{2g}$. 
The standard 
representation of $Sp(2g,\mathbb{R})$ on 
$(\mathbb{R}^{2g},\omega)$ naturally 
extends to an action of $Sp(2g,\mathbb{R})$ 
on $\mathbb{R}^{2g}\otimes \mathbb{C}=
\mathbb{C}^{2g}$. 
The open subset
\[\mathcal{O}=\{x+iy\mid x,y\in \mathbb{R}^{2g}, \omega(x,y)>0\}\subset
\mathbb{C}^{2g}\] is $Sp(2g,\mathbb{R})$-invariant.
It contains the
invariant hypersurface   
\[\Omega=\{x+iy\in \mathbb{C}^{2g}\mid  
\omega(x,y)=1\}.\]

We claim that $\Omega$ can naturally and $Sp(2g,\mathbb{R})$-equivariantly
be identified with the homogeneous space 
\[Sp(2g,\mathbb{R})/Sp(2g-2,\mathbb{R}).\]
To this end just observe that 
the diagonal action of the group $Sp(2g,\mathbb{R})$ 
on $\Omega$ is transitive.  
The stabilizer in $Sp(2g,\mathbb{R})$ of a point 
$x+iy\in \Omega$ is isomorphic to 
a standard embedded 
\[{\rm{Id}}\times Sp(2g-2,\mathbb{R})<
Sp(2,\mathbb{R})\times Sp(2g-2,\mathbb{R})<Sp(2g,\mathbb{R}).\]



The group $SL(2,\mathbb{R})$ acts from the right on $\Omega$
as follows. The real and imaginary part of 
a point $x+iy\in \Omega$ define the basis of a two-dimensional
symplectic subspace of $\mathbb{R}^{2g}$. The group $SL(2,\mathbb{R})$
acts by basis transformation on this subspace, 
preserving the symplectic form. 
The $SL(2,\mathbb{R})$-orbit through a point 
$x+iy\in \Omega$
preserves pointwise the symplectic complement of the subspace $V$ of 
$\mathbb{R}^{2g}$ spanned by $x$ and $y$ as well as the complexification
of $V$.


As the left action of $Sp(2g,\mathbb{R})$ on ${\cal O}$ is the restriction
of a linear action on $\mathbb{C}^{2g}$, the tangent bundle 
of ${\cal O}$ admits an $Sp(2g,\mathbb{R})$-invariant flat connection. 
Vector fields which are tangent to the orbits of a one-parameter
group of translations are globally parallel. 
This connection restricts to a flat connection $\nabla^{GM}$ on 
$T{\cal O}\vert \Omega$. 

The bundle $T{\cal O}\vert \Omega$ splits a sum 
\[T{\cal O}\vert \Omega=T\Omega\oplus \mathbb{R}\]
where the trivial line bundle $\mathbb{R}$ is the tangent bundle of the
orbits of the one-parameter group of deformations
$((x+iy),t)\to e^tx+ ie^ty$ transverse to
$\Omega$.  This splitting is not flat, i.e. it is not invariant under 
the connection $\nabla^{GM}$.

The tangent bundle  $T\Omega$ of $\Omega$ has a natural
splitting 
\[T\Omega={\cal R}\oplus {\cal T}\]
where for a point $x+iy\in \Omega$ we have 
\[{\cal R}_{x+iy}=\{u+iv\mid \omega(u,x)=\omega(u,y)=
\omega(v,x)=\omega(v,y)=0\}\]
and where ${\cal T}$ is tangent 
to the orbits
of the right action of $SL(2,\mathbb{R})$. 
Moreover, we have ${\cal R}={\cal E}\otimes \mathbb{C}$ where
${\cal E}_{x+iy}=\{u\in \mathbb{R}^{2g}\mid \omega(u,y)=\omega(u,x)=0\}$.

By definition of the action of $Sp(2g,\mathbb{R})$, the bundles 
${\cal R}$ and ${\cal T}$ are invariant under both the left action of 
$Sp(2g,\mathbb{R})$ and the right action of $SL(2,\mathbb{R})$. Thus 
the flat left $Sp(2g,\mathbb{R})$-invariant connection $\nabla^{GM}$ on 
$T{\cal O}\vert \Omega$ projects to a left $Sp(2g,\mathbb{R})$-invariant 
right $SL(2,\mathbb{R})$-invariant connection 
$\nabla^{\cal R}$ on ${\cal R}$ defined as follows.
Let 
\[P:T{\cal O}\vert \Omega=
{\cal R}\oplus {\cal T}\oplus \mathbb{R}\to {\cal R}\]
be the canonical projection, and 
for $X\in T\Omega$ and a local section $Y$ of ${\cal R}$ define
$\nabla^{\cal R}_XY=P\nabla_X^{GM}(Y)$.

We summarize this as follows.

\begin{lemma}\label{connection}
The flat $Sp(2g,\mathbb{R})$-invariant connection on $T{\cal O}$ 
projects to a connection $\nabla^{\cal R}$ 
on ${\cal R}$ which is invariant under both the left 
$Sp(2g,\mathbb{R})$ action and the right $SL(2,\mathbb{R})$ action. 
\end{lemma}

The curvature of the connection $\nabla^{\cal R}$ is a
two-form on $\Omega$ with values in the Lie algebra $\mathfrak{sp}(2g-2,\mathbb{R})$
of $Sp(2g-2,\mathbb{R})$, acting as an 
algebra of transformation on ${\cal R}$. 
The restriction of this two-form to the tangent bundle of the
orbits of the $SL(2,\mathbb{R})$-action vanishes. Moreover, 
the two-form is equivariant with respect to the left action of 
$Sp(2g,\mathbb{R})$ and the right action of $SL(2,\mathbb{R})$.

We say that the curvature form $\Theta$ for a connection $\nabla$ 
on a complex vector bundle
$E\to M$  
\emph{splits $E$ as a complex vector bundle} 
if there is a nontrivial $\Theta$-invariant decomposition $E=E_1\oplus E_2$ 
as a Whitney sum of two complex 
vector bundles. This means that for any $x\in M$ and any two vectors 
$Y,Z\in T_xM$ the map $\Theta(Y,Z)$ preserves the decomposition $E=E_1\oplus E_2$. 

Since $\Omega$ is not locally affine, 
the curvature form of the connection $\nabla^{\cal R}$ on ${\cal R}$ does not
vanish identically. 
Furthermore, the stabilizer in $Sp(2g,\mathbb{R})$ of 
a point $z\in \Omega$ can be identified with the subgroup $Sp(2g-2,\mathbb{R})$, 
which act on the fibre of ${\cal R}$ at $z$ via the standard representation of
$Sp(2g-2,\mathbb{R})$ on 
$\mathbb{C}^{2g-2}$, viewed as the complexification of the standard representation on 
$\mathbb{R}^{2g-2}$. Since the standard representation of 
$Sp(2g-2,\mathbb{R})$ on the complex vector space 
$\mathbb{C}^{2g-2}$ is irreducible,     
by equivariance, we have 

\begin{lemma}\label{split}
The curvature form of $\nabla^{\cal R}$ does not split 
${\cal R}$ as a complex vector bundle. 
\end{lemma}

The complement of the zero section ${\cal V}_0\subset {\cal V}$
of the bundle ${\cal V}\to {\cal D}_g$ 
is a complex manifold. 
The fibration ${\cal S}\to \Omega$ extends to a holomorphic fibration
${\cal V}_0\to {\cal O}$ of complex
manifolds. The fibres of the fibration  
define a foliation ${\cal F}$ of
${\cal V}_0$. 

The following is immediate from the definition of the complex structure on 
${\cal V}_0$ and on ${\cal O}\subset \mathbb{C}^{2g}$.  

\begin{lemma}\label{fholom}
The foliation ${\cal F}$ is holomorphic. A fibre is biholomorphic
to $\mathfrak{D}_{g-1}$. 
\end{lemma}

Let for the moment $G$ be an arbitrary Lie group. 
A \emph{$G$-connection} for a $G$-principal bundle
$X\to Y$ is given by a ${\rm Ad}(G)$-invariant
subbundle of the tangent
bundle of $X$ transverse to the tangent bundle of the 
fibres. Such a bundle 
is called \emph{horizontal}. 

The group $Sp(2g,\mathbb{R})$ is an 
$Sp(2g-2,\mathbb{R})$-principal bundle over  
$\Omega$.
In the statement of the following Lemma, 
the type $(2g,2g-1)$ stems from the fact that
$\Omega$ is a hypersurface in the manifold ${\cal O}$ 
with invariant indefinite metric of type $(2g,2g)$.

\begin{lemma}\label{connection2}
The $Sp(2g-2,\mathbb{R})$-principal bundle 
$Sp(2g,\mathbb{R})\to \Omega$ admits a natural 
real analytic $Sp(2g-2,\mathbb{R})$-connection 
which is invariant
under the left action of $Sp(2g,\mathbb{R})$
and the right action of $SL(2,\mathbb{R})$. The 
horizontal bundle 
${\cal Z}_0$ contains 
the tangent bundle ${\cal T}$ 
of the orbits of the $SL(2,\mathbb{R})$-action,
and it admits an $SL(2,\mathbb{R})$-invariant
$Sp(2g,\mathbb{R})$-invariant pseudo-Riemannian metric $h$ 
of type $(2g,2g-1)$. The $h$-orthogonal complement 
${\cal Y}_0$ of ${\cal T}$ in ${\cal Z}_0$ is a real analytic
$SL(2,\mathbb{R})$-invariant $Sp(2g,\mathbb{R})$invariant 
bundle.  
\end{lemma}
\begin{proof} 
The fibre containing the identity induces 
an embedding of Lie algebras
\[\mathfrak{sp}(2g-2,\mathbb{R})\to
\mathfrak{sp}(2g,\mathbb{R}).\] The restriction of 
the Killing form $B$ 
of $\mathfrak{sp}(2g,\mathbb{R})$ to 
the Lie algebra $\mathfrak{sp}(2g-2,\mathbb{R})$ is non-degenerate.
Thus the $B$-orthogonal complement $\mathfrak{z}$ 
of $\mathfrak{sp}(2g-2,\mathbb{R})$ 
is a linear subspace of 
$\mathfrak{sp}(2g,\mathbb{R})$ which is complementary
to $\mathfrak{sp}(2g-2,\mathbb{R})$ and invariant under the 
restriction of the adjoint representation ${\rm Ad}$ of 
$Sp(2g,\mathbb{R})$ to $Sp(2g-2,\mathbb{R})$.
The restriction to $\mathfrak{z}$ of the Killing form 
is a non-degenerate bilinear form of type 
$(2g,2g-1)$. 

The group $Sp(2g,\mathbb{R})$ acts by left translation
on itself, and this commutes with the right action of
$Sp(2g-2,\mathbb{R})$. Hence this action 
defines an action by automorphisms of the 
principal bundle. 

Define a $\mathfrak{sp}(2g-2,\mathbb{R})$-valued one-form
$\theta$ on $Sp(2g,\mathbb{R})$ by requiring that 
$\theta(e)$ equals the canonical projection
\[T_eSp(2g,\mathbb{R})=
\mathfrak{z}\oplus \mathfrak{sp}(2g-2,\mathbb{R})\to
\mathfrak{sp}(2g-2,\mathbb{R})\]
and 
\[\theta(g)=\theta\circ dg^{-1}.\]
Then for every $h\in Sp(2g-2,\mathbb{R})$ we have
\[\theta(gh)={\rm Ad}(h^{-1})\circ \theta(g)\]
and hence this 
defines an $Sp(2g,\mathbb{R})$-invariant connection
on the $Sp(2g-2,\mathbb{R})$-principal bundle 
$Sp(2g,\mathbb{R})\to \Omega$. 
Denote by ${\cal Z}_0$ the horizontal bundle. 
It is invariant under the left action of $Sp(2g,\mathbb{R})$ and
the right action of $Sp(2g-2,\mathbb{R})$, and it 
is equipped with 
an invariant pseudo-Riemannian metric of type $(2g,2g-1)$, 

Now $\mathfrak{sp}(2,\mathbb{R})\subset \mathfrak{z}$,
and hence the tangent bundle for the right action of
$Sp(2,\mathbb{R})$ is contained in the 
horizontal bundle ${\cal Z}_0$. Thus the subbundle 
${\cal Y}_0$ of ${\cal Z}_0$ defined by the $B$-orthogonal 
complement $\mathfrak{y}$ in $\mathfrak{z}$ 
of the Lie algebra 
$\mathfrak{sp}(2,\mathbb{R})$ is invariant as well.  
The lemma follows.
\end{proof}

Since 
${\cal S}=Sp(2g,\mathbb{R})\times_{Sp(2g-2,\mathbb{R})}
\mathfrak{D}_{g-1}$ and since the subgroups $SL(2,\mathbb{R})$ and 
$Sp(2g-2,\mathbb{R})$ commute, the right action of 
$SL(2,\mathbb{R})$ on $Sp(2g,\mathbb{R})$ descend to an
action of $SL(2,\mathbb{R})$ on ${\cal S}$. 
The action of the unitary subgroup 
$U(1)$ of  $Sp(2,\mathbb{R})$ 
is just the standard
circle action on the fibres 
of the sphere bundle ${\cal S}\to \mathfrak{D}_{g}$ 
given by multiplication with complex
numbers of absolute value one. 
The connection ${\cal Z}_0={\cal T}\oplus {\cal Y}_0$ induces 
a real analytic splitting
\[T{\cal S}=T{\cal F}\oplus {\cal Z}=T{\cal F}\oplus {\cal T}\oplus {\cal Y}\]
where $T{\cal F}$ denotes the tangent bundles of the fibres
of the fibration ${\cal S}\to \Omega$, 
the \emph{horizontal bundle} ${\cal Z}$
is the image of ${\cal Z}_0\times T\mathfrak{D}_{g-1}$ under the 
projection 
$Sp(2g,\mathbb{R})\times \mathfrak{D}_{g-1}\to {\cal S}$ and
as before, ${\cal T}$ is the tangent bundle of the orbits of the
$SL(2,\mathbb{R})$-action.

We also note

\begin{lemma}\label{invariance4}
The right action of $SL(2,\mathbb{R})$ on ${\cal S}$ 
projects to the standard action of $SL(2,\mathbb{R})$ on $\Omega$.
\end{lemma}
\begin{proof}
This follows as before from naturality and bi-invariance of the Killing form. 
\end{proof}

The group $Sp(2g,\mathbb{Z})$ acts 
properly discontinuously from the left on the
bundle ${\cal S}\to \Omega$ as a group of real analytic
bundle automorphisms.
In particular, 
it preserves the real analytic splitting of the tangent bundle
of ${\cal S}$ 
into the tangent bundle of the leaves of the foliation ${\cal F}$ 
and the complementary bundle.
Thus this splitting descends to an $SL(2,\mathbb{R})$-invariant
real analytic
splitting of the tangent bundle of the quotient.
This quotient is just the sphere bundle  of the quotient vector 
bundle (in the orbifold sense) 
over the locally symmetric space 
 \[{\cal A}_g=Sp(2g,\mathbb{Z})\backslash 
Sp(2g,\mathbb{R})/U(g).\]

\end{appendix}

\bigskip\noindent
MATHEMATISCHES INSTITUT DER UNIVERSIT\"AT BONN\\
ENDENICHER ALLEE 60, 53115 BONN, GERMANY\\
e-mail: ursula@math.uni-bonn.de

\end{document}